\newcommand*{\id}{{\mathrm{id}}}
\newcommand*{\la}{{\langle}}                                                  
\newcommand*{\ra}{{\rangle}}                                                  
\newcommand*{\R}{{\mathbb R}}
\newcommand*{\Rb}{{\mathbb R}}
\newcommand*{\CC}{{\mathbb{C}}}                                               
\newcommand*{\Hb}{{\mathbb H}}                                                
\newcommand*{\Vb}{{\mathbb V}}
\newcommand*{\Wb}{{\mathbb W}}                                                
\newcommand*{\Sb}{{\mathbb S}}                                                
\newcommand*{\Nb}{{\mathbb N}}
\newcommand*{\Jf}{{\mathfrak J}}
\newcommand*{\pa}{{\partial}}
\newcommand*{\Cb}{{\mathcal C}}
\newcommand*{\rd}{{\mathrm d}}                                                
\newcommand*{\Gr}{{\mathrm{Gr}}}
\newcommand*{\pf}{{\mathfrak p}} 
\newcommand*{\af}{{\mathfrak a}}  
\newcommand*{\wf}{{\mathfrak w}}
\newcommand*{\mm}{m}
\newcommand*{\bs}{\boldsymbol}
\journalname{Partial Differential Equations and Applications}
\begin{document}
\title{Applications of Grassmannian flows to integrable systems}
\titlerunning{Applications of Grassmannian flows}
\author{Anastasia Doikou \and Simon~J.A.~Malham \and Ioannis Stylianidis \and Anke Wiese}    
\authorrunning{Doikou, Malham, Stylianidis and Wiese}

\institute{Maxwell Institute for Mathematical Sciences,        
and School of Mathematical and Computer Sciences,   
Heriot-Watt University, Edinburgh EH14 4AS, UK\\
\email{A.Doikou@hw.ac.uk, S.J.A.Malham@hw.ac.uk, is11@hw.ac.uk, A.Wiese@hw.ac.uk}}
\date{12th January 2022}           
\maketitle

\begin{abstract}
We show how many classes of partial differential systems with local 
and nonlocal nonlinearities are linearisable in the sense that they are
realisable as Fredholm Grassmannian flows. In other words, time-evolutionary solutions
to such systems can be constructed from solutions to the corresponding underyling
linear partial differential system, by solving a linear Fredholm equation.
For example, it is well-known that solutions to
classical integrable partial differential systems can be generated
by solving a corresponding linear partial differential system for the scattering data
and then solving the linear Fredholm (or Volterra) integral equation known
as the Gel'fand--Levitan--Marchenko equation.
In this paper and in a companion paper, \textit{Doikou et al.\/} \cite{DMSW:graphflows},
we both, survey the classes of nonlinear systems that are realisable as
Fredholm Grasssmannian flows, and present new example applications of such flows.
We also demonstrate the usefulness of such a representation.
Herein we extend the work of P\"oppe and demonstrate how 
solution flows of the noncommutative potential Korteweg de Vries and nonlinear Schr\"odinger systems
are examples of such Grassmannian flows. 
In the companion paper we use this Grassmannian flow approach as well as an extension
to nonlinear graph flows, to solve Smoluchowski coagulation and related equations.
\end{abstract}
\keywords{Fredholm Grassmannian flows \and integrable systems \and triple system}

\section{Introduction}\label{sec:intro}
Our goal herein is to demonstrate that many classes of
partial differential systems with local and nonlocal nonlinearities
are realisable as Fredholm Grassmannian flows. In essence this means their
solution flow can be generated as the solution to a corresponding
set of linear partial differential systems together with
a linear Fredholm integral equation. In the context of classical integrable systems,
the linear Fredholm integral equation is the Gel'fand--Levitin--Marchenko equation.
In the more general unifying context of Grassmannian flows,
the linear Fredholm equation represents the linear relation whose
solution is the linear projection map from the Fredholm Stiefel manifold
to the Fredholm Grassmann manifold in a given Hilbert--Schmidt coordinate chart.
We call it the linear Fredholm kernel equation.
Note, the Fredholm Grassmann manifold can be thought of as consisting 
of all collections of graphs of compatible linear Hilbert--Schmidt maps.
In Beck, Doikou, Malham and Stylianidis~\cite{BDMS1,BDMS2},
we demonstrated the solution flows of many classes of partial differential systems 
with nonlocal nonlinearities are Grassmannian flows.
Here, we extend the class of flows which are Grassmannian flows
to include some classical integrable systems.
In a companion paper, Doikou \textit{et al.\/ } \cite{DMSW:graphflows},
we extend the class of Grassmannian flows to include the Smoluchowski coagulation and related equations
in the case of a constant frequency kernel.
We also we introduce the notion of graph flows, nonlinear generalisations of Grassmannian flows,
which incorporate the Smoluchowski coagulation equation for the cases of
additive and multiplicative frequency kernels.

The unifying approach to solving nonlinear partial differential equations 
we develop herein has its roots in computational spectral theory
where Grassmannian flows were developed to deal with numerical difficulties
associated with different exponential growth rates in the far-field 
for large (high order) systems, see Ledoux, Malham and Th\"ummler~\cite{LMT}
and Ledoux, Malham, Niesen and Th\"ummler~\cite{LMNT} as well as Karambal and Malham~\cite{KM}.
In Beck and Malham~\cite{BM} Grassmannian flows and representative coordinate patches were 
instrumental to the characterisation and computation of the Maslov index for large systems. 
This led to the need to develop the theory of Fredholm Grassmannian flows and thus nonlinear partial differential systems. 
We were then motivated by a sequence of papers by P\"oppe~\cite{P-SG,P-KdV,P-KP}, P\"oppe and Sattinger~\cite{PS-KP} 
and Bauhardt and P\"oppe~\cite{BP-ZS}. P\"oppe's work was recently revisited by McKean~\cite{McKean}.
These papers advocated, at the operator level, the solution of classical
integrable systems by solving a linearised version of the system at hand together with a Marchenko operator relation.
This was the natural development from classical constructions of this type suggested and developed in particular by Miura~\cite{Miura}, 
Dyson~\cite{Dyson} and Ablowitz, Ramani and Segur~\cite{ARSII}. Also see Nijhoff, Quispel, Van Der Linden and Capel~\cite{NQVDLCI},
Nijhoff, Quispel, Capel~\cite{NQVDLCII}, Mumford~\cite{Mumford}, Fordy and Kulisch~\cite{FK} and Tracy and Widom~\cite{TW}.
Of specific interest to us in P\"oppe's papers was that the structure underlying the approach advocated therein appeared to be  
that of a Fredholm Grassmannian. We exploited this perspective in Beck \textit{et al.\/} \cite{BDMS1,BDMS2} where 
we first explored this question and showed how to solve classes of partial differential systems
with nonlocal nonlinearities---we explain what we mean by such systems presently. 

Explicitly, though formally for the moment, the essential ideas 
underlying the P\"oppe programme we advocate here can be summarised as follows.
First let us define the unifying canonical system of operator equations underlying
all the systems we consider herein as well as in \textit{Doikou et al.\/} \cite{DMSW:graphflows}.
\begin{definition}[Canonical system]\label{def:nonlinearcanonicalsystem}
Suppose the time-dependent Hilbert--Schmidt operators $Q=Q(t)$, $P=P(t)$ and $G=G(t)$
satisfy the following system of operator equations, with $Q\coloneqq\id-\hat Q$:
\begin{align*}
\pa_tQ&=A(t,Q,P)\,Q+B(t,Q,P)\,P,\\
\pa_tP&=C(t,Q,P)\,Q+D(t,Q,P)\,P,\\
P&=GQ. 
\end{align*}
Here $A$, $B$, $C$ and $D$ are known operators which, in general, depend on $t$, $Q$ and $P$.
\end{definition}
Suppose $Q(0)=Q_0$ and $P(0)=P_0$ for some given data $Q_0$ and $P_0$ such that $\hat Q_0$ and $P_0$
are Hilbert--Schmidt operators. Further assume there exists a solution to the pair of evolutionary
equations for $Q=Q(t)$ and $P=P(t)$ shown in the canonical system in Definition~\ref{def:nonlinearcanonicalsystem},
in the sense that $\hat Q=\hat Q(t)$ and $P=P(t)$ are  Hilbert--Schmidt operators for $t\in[0,T]$ for some $T>0$.
If the Fredholm operator $Q=Q(t)$ and Hilbert--Schmidt operator $P=P(t)$ are related by the operator $G=G(t)$
shown in the third equation at least for some time $t\in[0,T]$ for some $T>0$,
then a straightforward calculation shows that if $Q=Q(t)$ and $P=P(t)$ satisfy the canonical system 
above, then $Q=Q(t)$ and $G=G(t)$ satisfy the coupled system of equations:
\begin{align*}
  \pa_tQ=&\;A(t,Q,GQ)\,Q+B(t,Q,GQ)\,(GQ),\\
  \pa_tG=&\;\bigl(C(t,Q,GQ)\,Q\bigr)\,Q^{-1}+\bigl(D(t,Q,GQ)\,(GQ)\bigr)\,Q^{-1}\\
  &\;-G\bigl(A(t,Q,GQ)\,Q+B(t,Q,GQ)\,(GQ)\bigr)\,Q^{-1},
\end{align*}
for $t\in[0,T^\prime]$ for some $0<T^\prime\leqslant T$.

Typically we choose the operators $A$, $B$, $C$ and $D$ so that the canonical system of equations
in Definition~\ref{def:nonlinearcanonicalsystem} is linear.
And further, typically, for the choices of $A$, $B$, $C$ and $D$ we make, the
evolution equation for the operator for $G$ decouples from that for $Q$---though the evolution
equation for $Q$ may still depend on $G$. All the systems considered in Beck \text{et al.\/} \cite{BDMS1,BDMS2},
as well as some of the Smoluchowski-type coagulation systems in our companion paper Doikou \text{et al.\/} \cite{DMSW:graphflows},
fall into this category. We briefly survey these cases now. 
Note that we have assumed that for $t\in[0,T]$ for some $T>0$, there exist solutions $Q$ and $P$ 
to the canonical system in Definition~\ref{def:nonlinearcanonicalsystem} such that $\hat Q$ and $P$
are Hilbert--Schmidt operators. As we shall see, as a consequence $G$ is also Hilbert--Schmidt valued.
This means that $\hat Q$, $P$ and $G$ have representations in terms of square-integrable kernel
functions, say, respectively $\hat q=\hat q(x,y;t)$, $p=p(x,y;t)$ and $g=g(x,y;t)$.
Hence the linear relation $P=GQ$ between $Q$ and $P$, which hereafter we shall denote 
the \emph{linear Fredholm equation}, has the form
\begin{equation*}
p(x,y;t)=g(x,y;t)-\int g(x,z;t)\hat q(z,y;t)\,\rd z.
\end{equation*}
The interval of integration, and any further properties assumed for the kernel functions $\hat q$, $p$ and $g$,
depend on the context/application at hand. For example, suppose the operators $A=C=O$ while 
$B=-\pa_x$ and $D=d(\pa_x)$ where $d$ is a constant coefficient polynomial function of its argument.
Then the canonical system reduces to the linear pair of partial differential equations
$\pa_t p=d(\pa_x)p$ and $\pa_t\hat q=-\pa_x p$. The evolution equation for $G=G(t)$, in terms of its
kernel $g=g(x,y;t)$ becomes (note the interval of integration is $\R$):
\begin{equation*}
\pa_tg(x,y;t)=d(\pa_x)g(x,y;t)+\int_\R g(x,z;t)\pa_z g(z,y;t)\,\rd z.
\end{equation*}
This nonlocal nonlinear partial differential equation is typical of
the many examples presented in Beck \text{et al.\/} \cite{BDMS1,BDMS2}.
Such classes of nonlinear partial differential equations have two underlying
characteristics. The first is that the nonlinearity is nonlocal as shown and we think of it 
as the generalisation from a product of matrix operators to the infinite
dimensional analogue of the kernel representation of the product of a
pair of infinite-dimensional Hilbert--Schmidt operators, here between
$G$ and $\pa_x G$. We call the integral term above the `\emph{big matrix product}'
and at the kernel level express it as $g\star (\pa g)$, i.e.\/ for any
two kernels $g=g(x,y)$ and $g'=g'(x,y)$ we write
\begin{equation*}
\bigl(g\star g'\bigr)(x,y)\coloneqq\int_\R g(x,z)g'(z,y)\,\rd z.
\end{equation*}
The second characteristic is that the unbounded operators shown $d=d(\pa x)$ for $D$ and $-\pa_x$ for $B$,
only act on the first arguments of the kernel functions.
Now assume, instead of the form above, the operator  $B$ corresponds to minus the identity operator
so that $\pa_t q=-p$. Then a  special subclass we can consider here is to suppose the
linear relation $P=GQ$ has a convolutional form so that,
\begin{equation*}
p(x,y;t)=g(x-y;t)-\int_\R g(x-z;t)\hat q(z,y;t)\,\rd z.
\end{equation*}
Then the evolution equation for $G=G(t)$, after setting $y=0$ becomes
(again for the moment the interval of integration is $\R$):
\begin{equation*}
\pa_tg(x;t)=d(\pa_x)g(x;t)+\int_\R g(x-z;t)g(z;t)\,\rd z.
\end{equation*}
See Beck \text{et al.\/} \cite[Sec.~1.4]{BDMS1} for more details.
If we restrict the interval of integration to $[0,\infty)$ then
this form of equation demonstrates how we can treat Smoluchowski-type coagulation
equations with constant frequency kernels in this manner.
Indeed this is the basis for the multitude of examples we consider
in our companion paper Doikou \text{et al.\/} \cite{DMSW:graphflows}.
Within this class of `big matrix product' equations, another example is insightful.
Hitherto, the nonlinearity in the resulting equation for $g=g(x,y;t)$ was generated
by the term associated with the operator $B$. This term corresponds to the quadratic
term that appears in the Riccati equation $\pa_t G=C+DG-G(A+BG)$, that results from
assuming $A$, $B$, $C$ and $D$ are functions of $t$ only, or if we allow $B$ and $D$
to also have the dependence on $\pa_x$ indicated above. However, now
suppose in the canonical system, the operators $B=C=O$ while $A(t,Q,P)=-\mathrm{i}f(PP^\dag)$
and $D=-\mathrm{i}h(\pa_x)$ where $h$ and $f$ are real constant coefficient polynomials
of their arguments with $h$ even, and $P^\dag$ represents the operator adjoint to $P$. 
Hence the canonical system reduces to the linear pair of partial differential equations
$\pa_t p=-\mathrm{i}h(\pa_x)p$ and $\pa_t\hat q=-\mathrm{i}f^\star(p\star p^\dag)\star q$, where now $p^\dag$
represents the kernel of $P^\dag$ and $q=\delta-\hat q$, with the `$\delta$' representing the
identity operator at the kernel level, in particular $\delta\star q'=q'\star\delta=q'$ for 
any Hilbert--Schmidt kernel $q'$. Note the `$\star$' is the big matrix product above, and
$f^\star$ represents the same polynomial as $f$, but the product given by `$\star$'. 
In this case the evolution equation for $G=G(t)$, i.e.\/ for the kernel $g=g(x,y;t)$, becomes
(see  Beck \text{et al.\/} \cite{BDMS2} for the details): 
\begin{equation*}
\mathrm{i}\pa_tg=h(\pa_x)g+g\star f^\star(g\star g^\dag).
\end{equation*}
If $f(x)=x$, then this equation represents the nonlocal
nonlinear Schr\"odinger equation, with the product the nonlocal big matrix product.
Note the nonlinearity is generated by the term corresponding to the operator $A$.
Further note, the canonical system here is linear in the sense that we can
solve the linear equation for $p=p(x,y;t)$ first, and then substitute its solution form
into the evolution equation $\pa_t\hat q=-\mathrm{i}f^\star(p\star p^\dag)\star q$ for $q=q(x,y;t)$
which is linear. See Beck \text{et al.\/} \cite{BDMS2} for more details.

Let us now examine the connection between the canonical system in Definition~\ref{def:nonlinearcanonicalsystem}
and classical integrable systems. Some further layers of structure are also required. 
As discussed above, and in particular motivated by the results in Ablowitz, Ramani and Segur~\cite{ARSII}
and in the sequence of papers by P\"oppe, we know that 
solutions to classical nonlinear integrable systems can be generated from solutions to the corresponding
linearised versions of these equations by solving the corresponding Gel'fand--Levitan--Marchenko equation.
At the operator level, say for the Korteweg--de Vries case, we suppose $\pa_t P=\mu\pa_x^nP$ and $\hat Q=P$,
so that $P=G(\id-P)$. Here $\mu\in\R$ is a constant and $n\geqslant 3$.
With reference to the canonical system in Definition~\ref{def:nonlinearcanonicalsystem},
this corresponds to setting $B=C=O$ and $A=D=\mu\pa_x^n$ so that the canonical system and linear
Fredholm relation reduce to,
\begin{align*}
\pa_tQ&=\mu\pa_x^nQ,\\
\pa_tP&=\mu\pa_x^nP,\\
P&=GQ.
\end{align*}
In the Korteweg--de Vries equation context, the scattering data corresponding to the Hilbert--Schmidt operator $P$
with kernel $p$ is assumed to be additive. By this we mean the following. If $y$ and $z$
are the variables parameterising the Hilbert--Schmidt kernel $p=p(y,z;t)$ corresponding
to $P$, then we assume $p$ has the form $p=p(y+z;t)$. Operators with such additive kernels
are known as \emph{Hankel operators}. Further we assume the operator $P$ depends on a parameter $x\in\R$
in an additive way so that it has the form $p=p(y+z+x;t)$. For this form, we can equivalently use $\pa_y$, $\pa_z$ or $\pa_x$
in the system of equations for the operators above, indeed, naturally we only need to solve the corresponding
kernel equations in terms of a single variable, eg.\/ we only need to solve $\pa_tp=\mu\pa_y^np$ for $p=p(y;t)$.
With $p$ and thus $P$ in hand, $\hat Q$ is automatically given as $\hat Q=P$. Further in this context the 
\emph{linear Fredholm kernel equation} has the form (note the interval of integration is now $(-\infty,0]$),
\begin{equation*}
p(y+z+x;t)=g(y,z;x,t)-\int_{-\infty}^0g(y,\xi;x,t)p(\xi+z+x;t)\,\rd\xi.
\end{equation*}
If we set $y=0$ and making a change of variables, this relation can be shown
to be precisely the Gel'fand--Levitan--Marchenko equation; see Remark~\ref{rmk:GLMandRiccati} for the details.
Hence thusfar, we have precisely the setup for the \emph{inverse scattering transform}.
Classically the procedure is now, using that the function $p$ satisfies the linear Korteweg--de Vries equation
and has an additive form, to differentiate the Fredholm equation above and show that $g=g(0,0;x,t)$
satisfies the potential Korteweg--de Vries equation. Note, setting $z=0$ in $g=g(0,z;x,t)$
is equivalent to evaluating the unknown in the Gel'fand--Levitan--Marchenko equation along the diagonal.
However herein, motivated by the work of P\"oppe, we prefer to keep the subsequent computation at
the operator level. Recall our choices for the operators $A$, $B$, $C$ and $D$ in this case. 
The evolution equation for $G=G(t)$ here has the form:
\begin{equation*}
  \pa_tG=\mu\pa_x^n(GQ)\,Q^{-1}-\mu G(\pa_x^nQ)\,Q^{-1}.
\end{equation*}
Setting $U\coloneqq(\id-P)^{-1}$, using that $U=\id+PU=\id+UP$ so $\pa_t G=\pa_t U$ and
also $\pa_t G=U(\pa_t P)U$, a more succinct version of this last relation is,
\begin{equation*}
  \pa_tG=\mu U(\pa_x^nP)U. 
\end{equation*}
This is still not a closed form equation for $G=G(t)$, however our goal is to
derive a closed form nonlinear equation for its kernel $g=g(y,z;x,t)$.
We also want to develop a systematic procedure to do this.
To achieve both, following P\"oppe, we now incorporate some additional implicit structure
we have not utilised thusfar. For any given Hilbert--Schmidt operator say $G$, let $[G]$ 
denote its square-integrable kernel so $[G]=g$. We denote the operator $[\,\cdot\,]$ the \emph{bracket operator}.
Thusfar we have not used that $P$ is a Hankel operator, for which the following crucial product rule holds. 
Suppose $F$ and $F'$ are Hilbert--Schmidt operators and that $H$ and $H'$
are Hilbert--Schmidt Hankel operators, dependent on parameters $x\in\R$ and $t\geqslant0$.
Then the fundamental theorem of calculus implies the following product rule
(the \emph{P\"oppe product rule}, see Lemma~\ref{lemma:productrule}):
\begin{equation*}
[F\pa_x(HH')F'](y,z;x,t)\equiv [FH](y,0;x,t)[H'F'](0,z;x,t).
\end{equation*}
The goal now is to apply the bracket operator to the evolution equation for $G$ above,
and see if the bracket operator applied to the sequence of operators $U(\pa_x^nP)U$
generates a closed form in $[G]$. The answer in this instance is positive, and after
relatively small effort we can show (see Theorem~\ref{thm:KdV}) that for the $n=3$ case,
\begin{equation*}
\bigl[U(\pa_x^3P)U\bigr](y,z;x,t)=\pa_x^3[G](y,z;x,t)-3\,\bigl(\pa_x[G](y,0;x,t)\bigr)\bigl(\pa_x[G](0,z;x,t)\bigr).
\end{equation*}
Thus $g(y,z;x,t)=[G](y,z;x,y)$ satisfies the nonlinear partial differential equation,
\begin{equation*}
  \pa_tg(y,z;x,t)=\mu\pa_x^3g(y,z;x,t)-3\bigl((\pa_x g)(y,0;x,t)\bigr)\bigl((\pa_x g)(0,z;x,t)\bigr).
\end{equation*}
Actually here we have a whole set of nonlocal nonlinear equations for each $y$ and $z$.
The nonlocal nonlinearity is the second notion of nonlocal nonlinearity we mention herein.
Importantly however, we notice that if we set $y=z=0$ then $g=g(0,0;x,t)$ satisfies the
the potential Korteweg--de Vries equation (with usual local nonlinearity),
\begin{equation*}
  \pa_tg(0,0;x,t)=\mu\pa_x^3g(0,0;x,t)-3\mu\bigl((\pa_x g)(0,0;x,t)\bigr)^2. 
\end{equation*}
Let us summarise the procedure just outlined. We start with a simple linear evolutionary
partial differential equation for the scattering data operator $P=P(t)$, which we assume is
a Hilbert--Schmidt Hankel operator. This equation represents the linear version of the target nonlinear integrable
partial differential equation. We set $\hat Q\coloneqq P$. 
We define an evolutionary Hilbert--Schmidt operator $G=G(t)$ via the Fredholm equation $P=G(\id-\hat Q)$.
Then by a systematic computation we derive an evolution equation for $G=G(t)$,
in which the flow-field, i.e.\/ the right-hand side in the equation $\pa_tG=\cdots$, depends in
general on $P$ and $U\coloneqq(\id-\hat Q)^{-1}$. The evolutionary operator $Q=\id-\hat Q$ itself
satisfies an evolutionary equation with a flow-field that depends on $Q$ and $G$---see the discussion
immediately following Definition~\ref{def:nonlinearcanonicalsystem}.
However we focus on the equation for $G=G(t)$, and see if we can write the flow-field,
at the level of the operator kernels as a closed form in terms of
the kernel or $\pa_x$ derivatives of the kernel---this is where we applied the
bracket operator and then utilised the product rule. It turns out, though this is relatively
obscure presently, that this latter procedure where we try to express the flow-field for $[G]=[G](t)$
as a closed form in terms of $[G]$ or spatial partial derivatives of $[G]$ is highly systematic
and boils down to algebraic polynomial construction. See for example Malham~\cite{Malham:KdVhierarchy}.
Our belief is that it is this systematic structure/procedure, initiated by P\"oppe,
that represents the major advantage of his approach.

Let us briefly consider one more example we tackle herein, the noncommutative nonlinear Schr\"odinger equation.
At the operator level we suppose the Hilbert--Schmidt Hankel operator $P=P(t)$ satisfies the linear
dispersive system $\pa_t P=-\mu_n(\mathrm{i}\mathcal I)^{n-1}\pa_x^nP$, where $\mu_n\in\R$ is a constant
and $\mathcal I\coloneqq\mathrm{diag}\{-\id,\id\}$, i.e.\/ the diagonal square matrix with the upper left
and lower right blocks as indicated. Further we set $\hat Q=P^2$. We can in principle fit this 
into the canonical system in Definition~\ref{def:nonlinearcanonicalsystem}, though there is no
need as we already have the necessary prescibed linear evolution in terms of $P$ and $\hat Q$.
As for the Korteweg--de Vries case, it is convenient to set $U\coloneqq(\id-\hat Q)^{-1}$.
A straightforward calculation shows (see Remark~\ref{rmk:NLSsimplify}) that,
\begin{equation*}
  \pa_tG=-\mu_n(\mathrm{i}\mathcal I)^{n-1}\bigl(U(\pa_x^nP)U+(-1)^{n-1}G(\pa_x^nP)G\bigr). 
\end{equation*}
As we did for the potential Korteweg--de Vries equation above, we apply the bracket operator
and see if the bracket operator applied to the right-hand side above generates a closed form in $[G]$.
Again, after relatively small effort we can show (see Theorem~\ref{thm:NLS}) that for the $n=2$ case, 
\begin{equation*}
  \bigl[U(\pa_x^2P)U-G(\pa_x^2P)G\bigr]=\pa_x^2[G](y,z;x,t)-2\,[G](y,0;x,t)[G](0,0;x,t)[G](0,z;x,t).
\end{equation*}
Thus $g(y,z;x,t)=[G](y,z;x,y)$ satisfies the nonlinear partial differential equation,
\begin{equation*}
  \mathrm{i}\pa_tg(y,z;x,t)=\mu_2\mathcal I\bigl(\pa_x^2g(y,z;x,t)-2g(y,0;x,t)g(0,0;x,t)g(0,z;x,t)\bigr).
\end{equation*}
If we set $y=z=0$ then $g=g(0,0;x,t)$ satisfies a noncommutative local cubic nonlinear partial differential equation.
That this system corresponds to the noncommutative nonlinear Schr\"odinger equation follows once we
suppose $g$ has the block form,
\begin{equation*}
g=\begin{pmatrix} O & g_\alpha^\dag\\ g_\alpha & O\end{pmatrix},
\end{equation*}
where $g_\alpha^\dag$ is the complex conjugate transpose of $g_\alpha$ which necessarily satisfies
the noncommutative nonlinear Schr\"odinger equation. We also establish integrability for
the noncommutative modified Korteweg--de Vries equation in this way, as well as nonlocal
reverse space-time versions as well. Indeed to establish these results, we extend the abstract
P\"oppe algebra approach we developed for the non-commutative potential Korteweg--de Vries hierarchy
in Malham~\cite{Malham:KdVhierarchy}. Here we develop a new abstract combinatorial structure for Hilbert--Schmidt operators.
This structure is a vector space which we endow with a triple product based on the P\"oppe product---the triple
product is required for closure of the algebra. We call this structure the \emph{P\"oppe triple system}.
As with the P\"oppe algebra, the P\"oppe triple system breaks down the problem of establishing integrability
to the problem of determining the existence of suitable polynomial expansions in the associated respective algebras,
which translates to solving an overdetermined linear algebraic problem for the polynomial coefficients.
Again, it is this systematic procedure that represents one of the 
advantages of P\"oppe's Hankel operator approach for integrable systems.

Our work combines applications of Hankel operators to noncommutative integrable systems,
including recently discovered nonlocal integrable systems. Noncommutative integrable
systems have received a lot of recent attention. See for example Adamopoulou and Papamikos~\cite{AP},
Buryak and Rossi~\cite{BuryakRossi}, Carillo and Schoenlieb~\cite{CSI,CSIa,CSII}, Degasperis and Lombardo~\cite{DL2},
Ercolani and McKean~\cite{EM}, Pelinovsky and Stepanyants~\cite{Pelinovsky} and Treves~\cite{TI,TII}.
The P\"oppe programme relies on the property that the linear operator $P$ above is a Hankel operator.
The connection between Hankel operators and integrable systems has also been recently highlighted in a series of papers,
see for example, Blower and Newsham~\cite{BM}, Grellier and Gerard~\cite{Gerard} and Grudsky and Rybkin~\cite{GRI,GRII}.
Partial differential systems with nonlocal nonlinearities which are integrable are also currently a very active research area.
See for example the many nonlocal systems considered in Ablowitz and Musslimani~\cite{AMusslimani}, the extension to
multi-dimensions in Fokas~\cite{Fokas}, as well as Grahovski, Mohammed and Susanto~\cite{GMS} and
G\"urses and Pekcan~\cite{GP2018,GP2019a,GP2019b,GP2020}. Also see Ablowitz and Musslimani~\cite{AMshift}
who consider space-time shifted nonlocal nonlinear equations.
There is a long history of the connection between integrable systems and Fredholm Grassmannians.
Infinite dimensional Grassmann manifolds are also called Sato Grassmannians in recognition
of the seminal work by Sato~\cite{SatoI,SatoII} making this connection. Also see Miwa, Jimbo and Date~\cite{MJD},
Mulase~\cite{Mulase}, Pressley and Segal~\cite{PS}, Segal and Wilson~\cite{SW}) and Wilson~\cite{W}.
For more recent work on this connection, see for example, Dupr\'e \textit{et al.\/} \cite{DGP2006,DGP2007,DGP2013},
Hamanaka and Toda~\cite{HT} and Kasman~\cite{Kasman1995,Kasman1998}.
For more details on the theory of infinite dimensional frames, see Balazs~\cite{Balazs} and
Christensen~\cite{Christensen}, and for further background on infinite dimensional Grassmann manifolds,
see Abbondandolo and Majer~\cite{AM}, Andruchow and Larotonda~\cite{AL}, Furutani~\cite{F} and
Piccione and Tausk~\cite{PT}. Algebraic approaches that are close to that we adopt herein
can be found in Dimakis and M\"uller--Hoissen~\cite{DM-H2005}, and a connection to shuffle
and Rota--Baxter algebras can be found in M\"uller--Hoissen~\cite{DM-H2008}. For more
on shuffle algebras as well as the abstract formalsim we consider here, see for example, Reutenauer~\cite{Reutenauer},
Malham and Wiese~\cite{MW} and Ebrahimi--Fard \textit{et al.\/} \cite{EFMKLMW}.

In this paper we:
\begin{enumerate}
\item Introduce in detail, the Fredholm Grassmann manifold and the characteristics of evolutionary
  flows on them;
\item Develop new tests for functions on $(-\infty,0]$ that establish whether they generate Hilbert--Schmidt
  Hankel operators;
\item Define the quasi-trace for Hilbert--Schmidt operators with matrix-valued integral
  kernels and give a new solution formula for the noncommutative Korteweg--de Vries equation;
\item Develop a new (inflated) system of linear dispersive partial differential equations that
  underlie the noncommutative potential Korteweg--de Vries equation on the one hand with a particular
  choice of linear Fredholm equation, and the noncommutative nonlinear Schr\"odinger and modified Korteweg--de Vries equations
  on the other hand, with a slightly different choice of linear Fredholm equation. This linear system
  considerably simplifies the operator analysis and algebra used to establish integrability of these equations; 
\item Review the abstract P\"oppe algebra based on the P\"oppe product for Hankel operators developed
  in Malham~\cite{Malham:KdVhierarchy}. We use the algebra to demonstrate how establishing integrability  
  for the noncommutative potential Korteweg--de Vries equation corresponds to establishing polynomial
  expansions in the algebra;
\item Introduce a new abstract algebraic triple system, the \emph{P\"oppe triple system}, based on the P\"oppe product.
  We use the triple system to establish integrability for the noncommutative nonlinear Schr\"odinger
  and modified Korteweg--de Vries equations. We show how establishing integrability corresponds to
  establishing polynomial expansions in the triple system which in turn boils down to solving an
  overdetermined linear algebraic system of equations for the polynomial coefficients.
  This abstract approach simulatnaeously establishes integrability for the nonlocal reverse space-time
  versions of these equations.
\end{enumerate}

Our paper is structured as follows. In Section~\ref{sec:FredholmGrassmannian}
we introduce Fredholm Grassmann manifolds together with some relevant associated properties and flows.
Then in Section~\ref{sec:Hankelandlinear} we introduce Hankel operators, the P\"oppe product
and conditions/properties for operators to be Hilbert--Schmidt valued.
We also introduce the quasi-trace. We analyse linear dispersive partial differential systems
in Section~\ref{sec:dispersivelinearPDES} establishing existence, uniqueness and regularity results 
as well as kernel properties we require for subsequent sections. We also provide a
new formula for the solution of the noncommutative Korteweg--de Vries equation.
In Section \ref{sec:KdV} we introduce the P\"oppe algebra and establish 
the integrability of the noncommutative potential Korteweg--de Vries equation,
as an example polynomial in that algebra. In Section~\ref{sec:NLS}
we introduce a new triple system algebra based on the P\"oppe product,
which we use to establish the integrability for the noncommutative
nonlinear Schr\"odinger and modified Korteweg--de Vries equations.
We discuss further applications and possible extensions in Section~\ref{sec:discussion}.
In Appendices~\ref{sec:numericalsimulations}, \ref{sec:scatteringproblem} and \ref{sec:Evansfunction} 
we, respectively, demonstrate numerical simulations based on the P\"oppe method,
derive the scattering and inverse scattering transformations in the noncommutative
context for completeness, and outline how we compute the transmission and reflection
coefficients in practice.

\section{Fredholm Grassmannian}\label{sec:FredholmGrassmannian}
An important structure underlying a large class of nonlinear systems 
is the Fredholm Grassmann manifold. Herein we introduce its structure.
More details and background information can be found in 
Beck \textit{et al.\/ }~\cite{BDMS1,BDMS2}, Segal and Wilson~\cite{SW}
and Pressley and Segal~\cite{PS}. Towards the end of this section
we also introduce some specialist sub-structures we will
need for the different applications to come. The Fredholm Grassmann manifold
or Fredholm Grassmannian is also known as the Sato Grassmannian or 
Segal--Wilson Grassmannian, amongst other nominations; see
Sato~\cite{SatoI,SatoII}, Miwa, Jimbo and Date~\cite{MJD},
Segal and Wilson~\cite[Section~2]{SW}) and Pressley and Segal~\cite[Chapters~6,7]{PS}. 

Before we proceed to introduce the Fredholm Grassmann manifold, to be complete,
we recall some facts on compact operators we shall need; see Reed and Simon~\cite{RS,RSIV},
Simon~\cite{Simon:Traces} and Gohberg \textit{et al.\/} \cite{GGK}.
Suppose we have a separable Hilbert space $\Hb=\Hb(\CC)$ with unitary basis $\{\varphi_j\}_{j\in\mathbb N}$ 
and standard inner product $\langle\,\cdot\,,\,\cdot\,\rangle_{\Hb}$.
We use $\Jf_\infty(\Hb)$ to denote the set of compact operators in $\Hb$.
An operator $A\in\Jf_\infty(\Hb)$ is positive if $\langle\varphi,A\varphi\rangle_{\Hb}\geqslant0$
for all $\varphi\in\Hb$. The operator $A^\dag A$ is positive as  
$\langle A^\dag A\varphi,\varphi\rangle_{\Hb}=\|A\varphi\|_{\Hb}^2\geqslant0$ and
we define $|A|=\sqrt{A^\dag A}$. Further, there is a unique unitary operator $U$ such that $A=U|A|$.
For any $A\in\Jf_\infty(\Hb)$, we define the trace by
$\mathrm{tr}\,A\coloneqq\sum_{j\in\mathbb N}\langle\varphi_j,A\varphi_j\rangle_{\Hb}$.
When it exists, the trace is linear and independent of the unitary basis chosen.
We will be particular;y concerned with two subclasses of operators of $\Jf_\infty(\Hb)$,
namely the \emph{Hilbert--Schmidt class} $\Jf_2(\Hb)$, and the \emph{trace class} $\Jf_1(\Hb)$
set of operators. These are characterised by the property ($N=1$ or $2$ although there is
a whole set of Schatten--von Neumann classes for general $N$):
\begin{equation*}
\Jf_N(\Hb)\coloneqq\{A\in\Jf_\infty(\Hb)\colon \mathrm{tr}\,|K|^N<\infty\}.
\end{equation*}
We have the natural inclusions $\Jf_1(\Hb)\hookrightarrow\Jf_2(\Hb)\hookrightarrow\Jf_\infty(\Hb)$.
Further, the Hilbert--Schmidt class of operators $\Jf_2(\Hb)$ is a Hilbert space with
inner product $\langle A_1,A_2\rangle_{\Jf_2(\Hb)}\coloneqq\mathrm{tr}\,A_1^\dag A_2$.
We can also characterise $\Jf_1(\Hb)$ and $\Jf_2(\Hb)$ as follows.
The eigenvalues $\{\lambda_j\}_{j\in\mathbb N}$ of any compact operator $A\in\Jf_\infty(\Hb)$
are finite in number away from the origin and the origin itself is the only accumulation point.
The singular values $\{s_j\}_{j\in\mathbb N}$ of $A\in\Jf_\infty(\Hb)$ are the eigenvalues of $\sqrt{A^\dag A}$.
Then we equivalently have, for $N=1$ or $2$, that $\mathrm{tr}\, A^N=\sum_{j\in\mathbb N}\lambda_j^N$
and $\mathrm{tr}\, |A|^N=\sum_{j\in\mathbb N}s_j^N$, with $\mathrm{tr}\, A^N\leqslant\mathrm{tr}\, |A|^N$.
Both $\Jf_1(\Hb)$ and $\Jf_2(\Hb)$ are operator ideals and indeed, the following properties hold.
If $A,B\in\Jf_2(\Hb)$ then $AB\in\Jf_1(\Hb)$. For either $N=1$ or $2$ we have, if $A\colon\Hb\to\Hb$
is a bounded operator and $B\in\Jf_N(\Hb)$, then $AB\in\Jf_N(\Hb)$. These results are
encapsulated in the inequalities (for $N=1$ or $2$):
\begin{equation*}
  \|AB\|_{\Jf_1(\Hb)}\leqslant\|A\|_{\Jf_2(\Hb)}\|B\|_{\Jf_2(\Hb)}\qquad\text{and}\qquad
  \|AB\|_{\Jf_N(\Hb)}\leqslant\|A\|_{\mathrm{op}}\|B\|_{\Jf_N(\Hb)},
\end{equation*}
where $\|\,\cdot\,\|_{\mathrm{op}}$ denotes the operator norm. Lastly, and
fundamentally, for any trace class operator $A\in\Jf_1(\Hb)$ there exists
a Fredholm determinant $\det_1(\id+A)$, while for a Hilbert--Schmidt operator
$A\in\Jf_2(\Hb)$ there exists a regularised Fredholm determinant $\det_2(\id+A)$.
Using the identity $\det\equiv\exp\,\mathrm{tr}\,\log$
these two  Fredholm determinants (for $N=1$ or $2$ as shown) are given for any $\epsilon\in\CC$ by:
\begin{equation*}
\mathrm{det}_N(\id+\epsilon A)\coloneqq\exp\sum_{\ell\geqslant N}\frac{(-1)^{\ell-1}}{\ell}\epsilon^\ell\mathrm{tr}\,A^\ell.
\end{equation*}
We discuss such traces and determinants in more detail in Section~\ref{sec:Hankelandlinear} 
in the context of our practical applications.

Recall the sequence space $\ell^2(\CC)$ of square summable complex sequences. 
It is sufficient for us to parametrise the sequences in $\ell^2(\CC)$ by $\mathbb N$.
Any sequence $\af\in\ell^2(\CC)$ can, for example, be represented as a column vector
$\af=(\af(1),\af(2),\af(3),\ldots)^{\mathrm{T}}$ where $\af(n)\in\CC$ for each $n\in\mathbb N$.
For two elements $\af,\mathfrak b\in\ell^2(\CC)$ the natural inner product 
on $\ell^2(\CC)$ is given by $\af^\dag\mathfrak b$, where $\dag$
denotes the complex conjugate transpose. And of course, $\af\in\ell^2(\CC)$ if and only if
$\af^\dag\af=\sum_{n\in\mathbb N}\af^\ast(n)\af(n)<\infty$, where $\ast$ denotes the complex conjugate only.
Further, a natural canonical orthonormal basis for $\ell^2(\CC)$ are the 
vectors $\{\mathfrak e_n\}_{n\in\mathbb N}$ where $\mathfrak e_n$
is the vector with one in its $n$th component and zeros elsewhere.
The reason for introducing $\ell^2(\CC)$ here is that all separable Hilbert spaces
are isomorphic to the sequence space $\ell^2(\CC)$; see Reed and Simon~\cite[p.~47]{RS}.
It provides a natural context in which to envisage the Fredholm Grassmann manifold.
Though the separable Hilbert function spaces we consider are isomorphic to $\ell^2(\CC)$,
in practice, they will not be isometric, and will correspond to subspaces of $\ell^2(\CC)$.
For example, the $\mathbb Z^2$-parameterised set of Haar or Daubechies wavelets give a complete,
orthonormal basis in $L^2(\R)$; see Qian and Weiss~\cite{QW}.
However in practice we consider smooth functions that are square integrable with respect to a weight function
and whose derivatives are also square integrable.

The Fredholm Grassmannian of all subspaces of a separable Hilbert space $\Hb$ that are
comparable in size to a given closed subspace $\Vb\subset\Hb$ is defined as follows;
see Pressley and Segal~\cite{PS}. 
\begin{definition}[Fredholm Grassmannian]\label{def:FredholmGrassmannian}
Let $\Hb$ be a separable Hilbert space with a given decomposition
$\Hb=\Vb\oplus\Vb^\perp$, where $\Vb$ and $\Vb^\perp$ are infinite
dimensional closed subspaces. The Grassmannian $\Gr(\Hb,\Vb)$
is the set of all subspaces $\Wb$ of $\Hb$ such that:
\begin{enumerate}
\item[(i)] The orthogonal projection $\mathrm{pr}\colon\Wb\to\Vb$ is 
a Fredholm operator, indeed it is a Hilbert--Schmidt perturbation
of the identity; and
\item[(ii)] The orthogonal projection $\mathrm{pr}\colon\Wb\to\Vb^\perp$ 
is a Hilbert--Schmidt operator.
\end{enumerate}
\end{definition}
Since $\Hb$ is separable, any element in $\Hb$ has a representation on a countable
basis, for example via the sequence of coefficients of the basis elements---as we
discussed for the case of $\ell^2(\CC)$ above. Suppose we are given a set of independent
sequences in $\Hb=\Vb\oplus\Vb^\perp$ which span $\Vb$ and we record them as
columns in the infinite matrix
\begin{equation*}
W=\begin{pmatrix} \id-\hat Q \\ P \end{pmatrix}.
\end{equation*}
Here each column of $\id-\hat Q\in\Vb$ and each column of $P\in\Vb^\perp$.
In our actual applications we take $\Hb=\Vb\times\Vb$ and thus $\Vb^\perp$
isomorphic and isometric to $\Vb$. Assume this context hereafter.
Now assume that when we constructed $\id-\hat Q$ and $P$, we ensured 
that $\id-\hat Q$ was a Fredholm operator on $\Vb$ with $\hat Q\in\Jf_2(\Vb)$
and $P$ is a Hilbert--Schmidt operator $P\in\Jf_2(\Vb)$.
Recall from above that for such Fredholm operators, the regularised
Fredholm determinant $\det_2(\id-\hat Q)$ is well-defined, and the
operator $\id-\hat Q$ is invertible if and only if $\det_2(\id-\hat Q)\neq0$.
With this in hand, let $\mathbb W$ denote the subspace of $\Hb$ spanned by the columns of $W$. 
Further we denote by $\Vb_0$ the canonical subspace with the representation
\begin{equation*}
V_0=\begin{pmatrix} \id \\ O \end{pmatrix},
\end{equation*}
where $O$ is the infinite matrix of zeros. The projections $\mathrm{pr}\colon\Wb\to\Vb_0$ and 
$\mathrm{pr}\colon\Wb\to\Vb_0^\perp$ respectively give 
\begin{equation*}
W^\parallel=\begin{pmatrix}
\id-\hat Q\\
O
\end{pmatrix}
\qquad\text{and}\qquad
W^\perp=\begin{pmatrix}
O\\
P
\end{pmatrix}.
\end{equation*}
This projection is achievable if and only $\mathrm{det}_2\,(\id-\hat Q)\neq0$. 
Assume this holds for the moment. We address what happens when it does not hold momentarily.
We observe that the subspace of $\Hb$ spanned by the columns of $W^\parallel$
coincides with the subspace spanned by $V_0$ which is $\Vb_0$.
Ineed the transformation $(\id-\hat Q)^{-1}\in\mathrm{GL}_{\mathrm{HS}}(\Vb)$
transforms $W^\parallel$ to $V_0$. Here $\mathrm{GL}_{\mathrm{HS}}(\Vb)$
is the restricted general linear group of transformations on $\Vb$
consisting of Hilbert--Schmidt perturbations of the identity.
See Remark~\ref{rmk:generallineargroup} for some further discussion
on such groups. Under the transformation, the representation $W$ for $\Wb$ becomes 
\begin{equation*}
\begin{pmatrix} \id \\ G \end{pmatrix},
\end{equation*}
where $G=P(\id-\hat Q)^{-1}$. Any subspace that can be projected onto $\Vb_0$
can be represented in this way and vice-versa. In this representation, the operators
$G\in\mathfrak J_2(\Vb)$ parametrise all the subspaces $\Wb$ that can be projected onto $\Vb_0$.
See Pressley and Segal~\cite{PS} and Segal and Wilson~\cite{SW} for more details.
The Fredholm index of the Fredholm operator `$\id-\hat Q$' is called the virtual dimension of $\Wb$.
When $\det_2(\id-\hat Q)=0$, we cannot project $\Wb$ onto $\Vb_0$. 
This occurs when $\mathrm{dim}(\Wb\cap\Vb_0^\perp)>0$ and corresponds 
to the poor choice of a representative coordinate patch.
Given a subset $\Sb=\{i_1,i_2,\ldots\}\subseteq\mathbb N$ let $\Vb_0(\Sb)$
denote the subspace given by $\mathrm{span}\{\mathfrak e_{i_1},\mathfrak e_{i_2},\ldots\}$.
The vectors $\{\mathfrak e_i\}_{i\in\Sb^{\circ}}$ span the subspace 
$\Vb_0^\perp(\Sb)$ which is orthogonal to $\Vb_0(\Sb)$.
From Pressley and Segal~\cite[Prop.~7.1.6]{PS} we know
for any $\Wb\in\mathrm{Gr}(\Hb,\Vb)$ there exists a set $\Sb\subseteq\mathbb N$
such that the orthogonal projection $\Wb\to\Vb_0(\Sb)$ is an isomorphism. 
The collection of all such subspaces $\Vb_0(\Sb)$ form an open covering
and represent the coordinate charts of $\mathrm{Gr}(\Hb,\Vb)$.
Explicitly the projections $\mathrm{pr}\colon\Wb\to\Vb_0(\Sb)$ 
and $\mathrm{pr}\colon\Wb\to\Vb_0^\perp(\Sb)$ 
would have the representations
\begin{equation*}
W^\parallel=\begin{pmatrix}
W_{\Sb}\\
O_{\Sb^\circ}
\end{pmatrix}
\qquad\text{and}\qquad
W^\perp=\begin{pmatrix}
O_{\Sb}\\
W_{\Sb^\circ}
\end{pmatrix},
\end{equation*}
where $W_{\Sb}$ represents the $\Sb$ rows of $W$ and so forth.
For any subspace $\Wb=\mathrm{span}\{W\}$ there exists 
a set $\Sb$ such that $\mathrm{det}_2(W_{\Sb})\neq0$ and we can make 
a transformation of coordinates $\Vb_0(\Sb)\to\Vb_0(\Sb)$ using
$W_{\Sb}^{-1}\in\mathrm{GL}(\Vb)$ so $W$ becomes 
\begin{equation*}
\begin{pmatrix} \id_{\Sb} \\ G_{\Sb^\circ} \end{pmatrix}.
\end{equation*}
For further details on submanifolds, the stratification
and the Schubert cell decomposition of the Fredholm Grassmannian,
see Pressley and Segal~\cite[Chap.~7]{PS}.
\begin{remark}[Top cell]
As will become clear in our applications, there is a canonical natural encoding for our data,
so that, initially and for a short time at least, the natural parameterisation
for the flow involves the coordinate chart/patch corresponding to
that for the original $W$ we considered above. In other words
we can assume $\Sb=\{1,2,3,\ldots\}$ and $W_{\Sb}=W$ with $\det_2(\id-\hat Q)\neq0$.
We call this canonical coordinate chart the \emph{top cell} and denote it as
$\Lambda_0(\Hb,\Vb)$.
\end{remark}
Let us now consider flows on the Fredholm Grassmannian.
Recall our formal discussion in the introduction and in particular
the `canonical system' we introduced in Definition~\ref{def:nonlinearcanonicalsystem}.
A lot of our applications fit into the linear version of this `canonical system';
namely that in Definition~\ref{prescription:canonicalinearsystem} below.
For example, as we saw in the Introduction, our applications to nonlocal `big matrix' partial differential systems
fit into this prescription, as does our application to a general Smoluchowski-type coagulation equation
with constant frequency kernel in our companion paper \textit{Doikou et al.\/} \cite{DMSW:graphflows}.
In principle our application to the noncommutative potential Korteweg--de Vries equation in Section~\ref{sec:KdV} does
as well, however there is a simpler prescription for this system, as there is for 
our application to the noncommutative nonlinear Schr\"odinger equation in Section~\ref{sec:NLS}.
For both these cases, as well as the noncommutative modified Korteweg--de Vries equation,
see Definition~\ref{prescription:KdVNLS} for the relevant prescription.
For either prescription, i.e.\/ either the `canonical linear system'
in Definition~\ref{prescription:canonicalinearsystem} just below or that in Definition~\ref{prescription:KdVNLS}
in Section~\ref{sec:dispersivelinearPDES}, 
we consider evolutionary flows where the operators $Q=Q(t)$ and $P=P(t)$ evolve in time $t\geqslant0$ as solutions
to the system of linear equations shown. We assume $Q$ to be of the form $Q=\id-\hat Q$ and require both $\hat Q$ 
and $P$ to be Hilbert--Schmidt operators (there are some possible refinements to this in the classical integrable
systems cases as we will see). Assume the Hilbert subspaces $\Vb$ and $\Vb^\perp$ of $\Hb$ are the same, as above. 
With initial data $\hat Q_0, P_0\in\Jf_2(\Vb)$ we assume there exists a $T>0$ such that for $t\in[0,T]$ we can establish
solutions $Q(t)=\id-\hat Q(t)$ and $P=P(t)$, to either of the linear systems
in Definitions~\ref{prescription:canonicalinearsystem} and \ref{prescription:KdVNLS},
such that $\hat Q,P\in C^\infty\bigl([0,T];\Jf_2(\Vb)\bigr)$. (In typical applications we can establish this for all $T>0$.)
With these two operators in hand on $[0,T]$, there exists a smooth path of subspaces $\Wb=\Wb(t)$ of $\Hb$ such that
the projections $\mathrm{pr}\colon\Wb(t)\to\Vb$ and $\mathrm{pr}\colon\Wb(t)\to\Vb^\perp$ are respectively
parametrised by the operators $Q(t)=\id-\hat Q(t)$ and $P(t)$.
In Lemma~\ref{lemma:EUCanonical} just below, we prove that for a finite
time $[0,T^\prime]$ (at least) with possibly $0<T^\prime\leqslant T$, we can parameterise the subspace $\Wb=\Wb(t)$ in the
top cell $\Lambda_0(\Hb,\Vb)$ (or coordinate chart) of the Fredholm Grassmannian $\mathrm{Gr}(\Hb,\Vb)$,
by the Hilbert--Schmidt valued operator $G\in C^\infty\bigl([0,T];\Jf_2(\Vb)\bigr)$.
\begin{lemma}[Existence and Uniqueness: Fredholm equation]\label{lemma:EUCanonical}
  Assume for some $T>0$ we know that $\hat Q\in C^\infty\bigl([0,T];\Jf_2(\Vb)\bigr)$ and
  $P\in C^\infty\bigl([0,T];\Jf_N(\Vb)\bigr)$, where either $N=1$ or $N=2$.
  Further assume that $\mathrm{det}_2(\id-\hat Q_0)\neq0$.
  Then there exists a $T^\prime>0$ with $T^\prime\leqslant T$ such that for
  $t\in[0,T^\prime]$ we have $\mathrm{det}_2(\id-\hat Q(t))\neq0$ and 
  there exists a unique solution $G\in C^\infty\bigl([0,T^\prime];\Jf_N(\Vb)\bigr)$ 
  to the linear Fredholm equation $P=G(\id-\hat Q)$.
\end{lemma}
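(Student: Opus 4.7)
The plan is to separate the argument into two essentially independent steps: first, produce a time interval on which $Q(t)=\id+\hat Q(t)$ is invertible; and second, solve the Fredholm equation $P=GQ$ explicitly and verify the claimed regularity. For the first step I would use continuity of the regularised Fredholm determinant. Since $\mathrm{det}_2\colon\id+\Jf_2(\Vb;\Vb)\to\CC$ is continuous (indeed locally Lipschitz) with respect to the Hilbert--Schmidt norm, and $\hat Q\in C^\infty([0,T];\Jf_2(\Vb;\Vb))$, the scalar map $t\mapsto\mathrm{det}_2(\id+\hat Q(t))$ is continuous on $[0,T]$. Since it is nonzero at $t=0$ by hypothesis, there exists $T'\in(0,T]$ on which it remains nonzero. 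On that interval, by Simon's characterisation already cited in the paper, $Q(t)$ is invertible.

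For the second step, the Fredholm equation $P(t)=G(t)Q(t)$ admits on $[0,T']$ the unique solution $G(t)=P(t)Q(t)^{-1}$. To control its regularity I would write $Q(t)^{-1}=\id+\hat R(t)$, where the standard identity $\hat R=-\hat Q Q^{-1}=-Q^{-1}\hat Q$ and the fact that $\Jf_2$ is a two-sided operator ideal in $\mathcal B(\Vb)$ together force $\hat R(t)\in\Jf_2(\Vb;\Vb)$. Hence
\begin{equation*}
G(t)=P(t)+P(t)\hat R(t)
\end{equation*}
lies in $\Jf_2(\Vb;\Vb^\perp)$ for the same ideal reason. Smoothness in $t$ follows from the fact that operator inversion is smooth on the open set of invertibles in $\mathcal B(\Vb)$, and the Hilbert--Schmidt topology embeds continuously into the operator-norm topology; so $t\mapsto Q(t)^{-1}$ is smooth as a map into $\id+\Jf_2(\Vb;\Vb)$, and post-composition with $P(\cdot)$ preserves $C^\infty$ regularity.

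The delicate point, and the one I expect to be the principal obstacle, is verifying $G(t)\in\Jf_\ast(\Vb;\Vb^\perp)$ when $\Jf_\ast$ is a proper closed subspace of $\Jf_2(\Vb;\Vb^\perp)$ rather than the full Hilbert--Schmidt space. The argument above places $G$ in $\Jf_2$ but not automatically in $\Jf_\ast$. To push $G$ into the smaller subspace one needs a structural closure property: since $G=P+P\hat R$ and $P\in\Jf_\ast$, what is required is that $\Jf_\ast$ be closed under the right action $K\mapsto K\hat R(t)$ for every $t\in[0,T']$. In the concrete settings to come, $\Jf_\ast$ encodes a symmetry or reduction condition on the integral kernel that is preserved by the base operators $A$, $B$, $C$, $D$; the desired closure for $\hat R(t)$ then follows by expanding $(\id+\hat Q)^{-1}$ as a Neumann series (locally in time when $\|\hat Q\|_{\Jf_2}$ is small enough, and by analytic continuation otherwise), applying the invariance term by term, and using closedness of $\Jf_\ast$. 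This is the only step where the abstract lemma must be reconciled with the structure inherited from the specific application.
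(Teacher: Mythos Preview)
Your two-step outline is sound and covers the same ground as the paper, but the execution differs in a way worth noting. The paper does not invoke smoothness of operator inversion abstractly; instead it works quantitatively. It factors
\begin{equation*}
(\id+\hat Q)^{-1}=(\id+\hat Q_0)^{-1}\bigl(\id+(\hat Q-\hat Q_0)(\id+\hat Q_0)^{-1}\bigr)^{-1},
\end{equation*}
so that the second factor is a genuine Neumann series in the small quantity $\hat Q-\hat Q_0$, and then bounds $\hat Q_\ast\coloneqq\hat Q_0(\id+\hat Q_0)^{-1}$ using the derivative of the regularised determinant $\mathrm{det}_2$ together with Simon's trace-ideal estimates. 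This buys an explicit $\Jf_\ast$-norm bound on $G$ in terms of $\|P\|_{\Jf_\ast}$, $\|\hat Q-\hat Q_0\|_{\Jf_2}$ and $\mathrm{det}_2(\id+\hat Q_0)$, whereas your route gives existence and regularity but no such estimate. Your argument is shorter; theirs is more informative.

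On the $\Jf_\ast$ issue you flag: you are right that this is the crux, but your proposed fix via Neumann series ``and analytic continuation otherwise'' is not quite workable, since $\hat Q_0$ need not be small and there is no analytic parameter to continue in. The paper's resolution is simply to \emph{assume} the right-module inequality $\|HK\|_{\Jf_\ast}\leqslant\|H\|_{\Jf_\ast}\|K\|_{\mathrm{op}}$ for $H\in\Jf_\ast$ and bounded $K$; this is stated in the proof as a standing property of $\Jf_\ast$, not derived. With that inequality in hand, the factoring trick above immediately gives $P(\id+\hat Q)^{-1}\in\Jf_\ast$ without any appeal to structural invariance from the applications. So rather than arguing closure term by term, you should take the ideal property as part of what ``closed subspace $\Jf_\ast$'' means in this lemma; the concrete $\Jf_\ast$'s that arise later all satisfy it.
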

\begin{proof}
Our proof relies on results from Simon~\cite{Simon:Traces} and Beck \textit{et al.\/} \cite{BDMS2}.
First, since $\hat Q\in C^\infty\bigl([0,T];\Jf_2(\Vb)\bigr)$
and $\mathrm{det}_2(\id-\hat Q_0)\neq0$, by continuity
there exists a $T^\prime>0$ with $T^\prime\leqslant T$ such that 
for $t\in[0,T^\prime]$ we know $\mathrm{det}_2\bigl(\id-\hat Q(t)\bigr)\neq0$.
Second, if $\|\,\cdot\,\|_{\mathrm{op}}$
denotes the operator norm for bounded operators on $\Vb$,
then for any $\ell\in\mathbb N$, $H\in\mathfrak J_N(\Vb)$ and $K\in\mathfrak J_2(\Vb)$ we have: 
$\|HK^\ell\|_{\Jf_N}\leqslant\|H\|_{\Jf_N}\|K\|_{\mathrm{op}}^\ell\leqslant\|H\|_{\Jf_N}\|K\|_{\mathfrak J_2}^\ell$.
Third, we can establish (see Beck \textit{et al.\/} \cite[Lemma 2.4, proof]{BDMS2}) using the identity
$(\id-\hat Q)^{-1}\equiv(\id-\hat Q_0)^{-1}\bigl(\id-(\hat Q-\hat Q_0)(\id-\hat Q_0)^{-1}\bigr)^{-1}$, that
\begin{equation*}
\|P(\id-\hat Q)^{-1}\|_{\Jf_N}\leqslant\|P(\id-\hat Q_0)^{-1}\|_{\Jf_N}
\Bigl(1-\bigl\|(\hat Q-\hat Q_0)(\id-\hat Q_0)^{-1}\bigr\|_{\mathfrak J_2}\Bigr)^{-1}.
\end{equation*}
With this inequality in hand, our goal is to establish $\|P(\id-\hat Q_0)^{-1}\|_{\Jf_N}$ is bounded 
and $\|(\hat Q-\hat Q_0)(\id-\hat Q_0)^{-1}\|_{\mathfrak J_2}<1$.
Fourth, note we can write $(\id-\hat Q_0)^{-1}=\id+\hat Q_*$ where $\hat Q_*\coloneqq\hat Q_0(\id-\hat Q_0)^{-1}$.
Therefore we observe using the ideal property we can establish the two inequalities
\begin{align*}
\|P(\id-\hat Q_0)^{-1}\|_{\Jf_N}&\leqslant\|P\|_{\Jf_N}\bigl(1+\|\hat Q_*\|_{\Jf_2}\bigr),\\
\|(\hat Q-\hat Q_0)(\id-\hat Q_0)^{-1}\|_{\mathfrak J_2}
&\leqslant\|(\hat Q-\hat Q_0)\|_{\mathfrak J_2}\bigl(1+\|\hat Q_*\|_{\mathfrak J_2}\bigr).
\end{align*}
By assumption we know $\|P\|_{\Jf_N}$ is bounded and by continuity $\|(\hat Q-\hat Q_0)\|_{\mathfrak J_2}$ is as small as we require
for sufficiently small times. Fifth, our goal now is to demonstrate $\|\hat Q_*\|_{\mathfrak J_2}$ is bounded.
We define $f$ to be the regularised Fredholm determinant function $f(A)\coloneqq\mathrm{det}_2(\id+A)$ for any $A\in\Jf_2$. 
From Simon~\cite[Theorem~9.2]{Simon:Traces} we know there exists a constant $\Gamma$ such that
$|\mathrm{det}_2(\id+A)|\leqslant\exp(\Gamma\|A\|_{\Jf_2}^2)$.
From Simon~\cite[p.~76]{Simon:Traces} we have the following identity, 
\begin{equation*}
\mathrm{det}_2\bigl((\id+A)(\id+B)\bigr)
=\mathrm{det}_2(\id+A)\mathrm{det}_2(\id+B)\mathrm{e}^{-\mathrm{tr}\,(AB)},
\end{equation*}
for any $A,B\in\Jf_2$. Hence see for any $\mu\in\CC$ we have
\begin{align*}
\mathrm{det}_2(\id+A+\mu B)
&=\mathrm{det}_2(\id+A)\mathrm{det}_2\bigl(\id+\mu(\id+A)^{-1}B\bigr)
\mathrm{e}^{-\mu\,\mathrm{tr}\,(A(\id+A)^{-1}B)}\\
&=\mathrm{det}_2(\id+A)\bigl(1-\mu\,\mathrm{tr}\,(A(\id+A)^{-1}B)+\mathcal O(\mu^2)\bigr),
\end{align*}
after using the definition of the regularised determinant to expand the second determinant factor above.
Hence the derivative of $f$ is $Df(A)=-A(\id+A)^{-1}\mathrm{det}_2(\id+A)$.
Further from Simon~\cite[Theorem~5.1]{Simon:Traces} we deduce $\|Df(A)\|_{\Jf_2}\leqslant\exp(\Gamma(\|A\|_{\Jf_2}+1)^2)$.
Finally since $\hat Q_\ast=-Df(-\hat Q_0)/\mathrm{det}_2(\id-\hat Q_0)$
we can establish the required bound for $\|\hat Q_*\|_{\Jf_2}$ and the conclusion for $G=P(\id-\hat Q)^{-1}$ follows.
\qed
\end{proof}
The `canonical linear system' we mentioned above is as follows.
\begin{definition}[Prescription: Canonical linear system]\label{prescription:canonicalinearsystem}
Assume for given data $\hat Q_0, P_0\in\Jf_2(\Vb)$ with $\mathrm{det}_2(\id-\hat Q_0)\neq0$,
there exists a $T>0$ such that $\hat Q,P,G\in C^\infty\bigl([0,T];\Jf_2(\Vb)\bigr)$,
satisfy the \emph{canonical linear system} of equations (with $Q=\id-\hat Q$),
\begin{align*}
\pa_tQ&=AQ+BP,\\
\pa_tP&=CQ+DP,\\
P&=G(\id-\hat Q),
\end{align*}
with $\hat Q(0)=\hat Q_0$ and $P(0)=P_0$. Here the operators $A=A(t)$, $B=B(t)$, $C=C(t)$ and $D=D(t)$ may be
either bounded or unbounded operators.
\end{definition}
We call the evolution equations for $Q$ and $P$ the \emph{linear base equations}
and the linear integral equation defining $G=G(t)$ the \emph{linear Fredholm equation}.
Assuming we have established suitable solutions $Q=Q(t)$ and $P=P(t)$
as indicated, then Lemma~\ref{lemma:EUCanonical} implies the following.
\begin{corollary}[Canonical decomposition]
Assume the operators $Q$, $P$ and $G$ satisfy the canonical prescription in Definition~\ref{prescription:canonicalinearsystem}
and the conditions stated therein. Assume further that we know that $A,C,BG,DG\in C^\infty\bigl([0,T];\Jf_2(\Vb)\bigr)$ for
the time $T>0$ stated in the Canonical Prescription. Then the results of the existence and uniqueness Lemma~\ref{lemma:EUCanonical}
imply there exists a $T'>0$ with $T'\leqslant T$ such that for all $t\in[0,T']$
the operator $G\in C^\infty\bigl([0,T'];\Jf_2(\Vb)\bigr)$ satisfies (with $G_0=P_0(\id-\hat Q_0)^{-1}$),
\begin{equation*}
\pa_t G=C+DG-G(A+BG).
\end{equation*}
\end{corollary}
\begin{proof}
Differentiating the relation $P=GQ$ with respect to time using the product rule, using the base equations and 
feeding through the relation $P=GQ$ once more generates $(\pa_tG)Q=\pa_tP-G\pa_tQ=(C+DG)Q-G(A+BG)Q$. 
Postcomposing by $Q^{-1}$, which exists at least for some finite time interval, thus establishes the result.
\qed
\end{proof}
\begin{remark}[Canonical matching]\label{rmk:canonicalmatching}
  In the introduction we matched many examples to the canonical linear system above.
  For example we matched most of the nonlocal `big matrix' partial differential systems as well as
  a general constant frequency Smoluchowski-type model.   Examples such as the potential Korteweg de Vries equation
  or the nonlocal `big matrix' and local nonlinear Schr\"odinger equations, require a slightly more general form.
  For all the classical integrable systems we consider herein such as the noncommutative potential Korteweg de Vries
  and nonlinear Schr\"odinger equations, using the result of Lemma~\ref{lemma:EUCanonical},
  we establish local in time existence, uniqueness and well-posedness, separately. 
\end{remark}
We round off this section with some observations concerning the Fredholm Grassmannian we defined above.
\begin{remark}[General linear group]\label{rmk:generallineargroup}
There is a natural group action on $\mathrm{Gr}(\Hb,\Vb)$.
This group is the restricted general linear group $\mathrm{GL}_{\mathrm{HS}}(\Hb)$ of Hilbert--Schmidt perturbations
of the identity, a subgroup of $\mathrm{GL}(\Hb)$.
Elements of $S\in\mathrm{GL}_{\mathrm{HS}}(\Hb)$ can be expressed in the block form
\begin{equation*}
S=\begin{pmatrix} s_{11} & s_{12}\\ s_{21} & s_{22}\end{pmatrix},
\end{equation*}
which respects the decomposition $\Hb=\Vb\oplus\Vb^\perp$, and for which
$s_{11}$ and $s_{22}$ are Fredholm operators which are Hilbert--Schmidt perturbations 
of the identity, while $s_{12}$ and $s_{21}$ are Hilbert--Schmidt operators;
see Pressley and Segal~\cite[p.~80]{PS}. Note by the ideal property of
of Hilbert--Schmidt operators if $R=\id+\hat R$ and $S=\id+\hat S$
are two elements in $\mathrm{GL}_{\mathrm{HS}}((\Hb)$ then $RS=\id+\hat R+\hat S+\hat R\hat S$
also lies in $\mathrm{GL}_{\mathrm{HS}}((\Hb)$. Further suppose $\sigma\in\Jf_2(\Hb)$
then $\exp\sigma\in\mathrm{GL}_{\mathrm{HS}}((\Hb)$. To see this we observe, using that
$\exp\sigma=\id+\sigma+\frac{1}{2!}\sigma^2+\frac{1}{3!}\sigma^3+\cdots$, and the  
ideal property of $\Jf_2(\Hb)$ and the associated inequalities we outlined at the beginning of
this section, we have $\|\exp\sigma-\id\|_{\Jf_2(\Hb)}\leqslant\exp\bigl(\|\sigma\|_{\Jf_2(\Hb)}\bigr)-1$.
Since the upper bound shown is bounded we deduce $\exp\sigma\in\mathrm{GL}_{\mathrm{HS}}((\Hb)$.
Consequently, in this context we define the general linear algebra on $\Hb$ as follows:
\begin{equation*}
\mathfrak{gl}(\Hb)\coloneqq\bigl\{\sigma\in\Jf_2(\Hb)\colon\exp(\mu\sigma)\in\mathrm{GL}_{\mathrm{HS}}((\Hb),~\text{for all}~\mu\in\R\bigr\}.
\end{equation*}
We observe $\mathfrak{gl}(\Hb)$ is closed under addition and commutation.
Further, for any two elements $\sigma_1,\sigma_2\in\mathfrak{gl}(\Hb)$, we have
$\exp(\mu\sigma_1)\exp(\mu\sigma_2)=\exp\bigl(\mathrm{BCH}(\sigma_1,\sigma_2)\bigr)$, where
\begin{equation*}
\mathrm{BCH}(\sigma_1,\sigma_2)=\sigma_1+\sigma_2+\tfrac12[\sigma_1,\sigma_2]
+\tfrac{1}{12}[\sigma_1,[\sigma_1,\sigma_2]]-\tfrac{1}{12}[\sigma_2,[\sigma_1,\sigma_2]]+\cdots,
\end{equation*}
is the \emph{Baker--Campbell--Hausdorff series}, converging for
$\|\sigma_1\|_{\Jf_2(\Hb)}+\|\sigma_2\|_{\Jf_2(\Hb)}<\frac12\log 2$.
Hence the algebra structure of $\mathfrak{gl}(\Hb)$ locally determines that of $\mathrm{GL}_{\mathrm{HS}}(\Hb)$,
and models the tangent space at the origin of $\mathrm{GL}_{\mathrm{HS}}((\Hb)$.
\end{remark}
\begin{remark}[Principle fibre bundle]\label{rmk:fibrebundle}
  Formally, and by analogy with the finite dimensional case, the results just above suggest
  the existence of the following structure (we do not prove this here). We define the 
  Fredholm Stiefel manifold $\mathrm{St}(\Hb,\Vb)$ as the set of independent sequences
  in $\Hb\coloneqq\Vb\times\Vb$ which span $\Vb$ of the form
  \begin{equation*}
    W=\begin{pmatrix} \id-\hat Q \\ P \end{pmatrix},
  \end{equation*}
  where $\hat Q, P\in\mathfrak J_2(\Vb)$. Then the Fredholm Stiefel manifold $\mathrm{St}(\Hb,\Vb)$ has a principle fibre
  bundle structure so that $\mathrm{GL}_{\mathrm{HS}}(\Vb)\to\mathrm{St}(\Hb,\Vb)\to\mathrm{Gr}(\Hb,\Vb)$.
  Here, as in the finite dimensional case, we think of $\mathrm{Gr}(\Hb,\Vb)$ as the \emph{base space},
  and with reference to the bundle projection $\pi\colon\mathrm{St}(\Hb,\Vb)\to\mathrm{Gr}(\Hb,\Vb)$,
  at each element in $\mathrm{Gr}(\Hb,\Vb)$, the inverse image $\pi^{-1}$ of that element
  is homeomorphic to the fibre space $\mathrm{GL}_{\mathrm{HS}}(\Vb)$. Further, we can consider the 
  associated tangent bundle. Consider the induced decomposition of the tangent space
  $T_W\mathrm{St}(\Hb,\Vb)$ at $W\in\mathrm{St}(\Hb,\Vb)$. We can decompose $T_W\mathrm{St}(\Hb,\Vb)$
  into horizontal and vertical subspaces, $T_W\mathrm{St}(\Hb,\Vb)=\mathbb T_W^\parallel\oplus\mathbb T_W^\perp$. 
  Here the horizontal subspace $\mathbb T_W^\parallel$ is associated with the tangent space
  of the Fredholm Grassmannian base space, while the vertical subspace $\mathbb T_W^\perp$
  is associated with the fibres homeomorphic to $\mathrm{GL}_{\mathrm{HS}}(\Vb)$.
  Consider a coordinate patch representation characterised by the subset $\Sb=\{i_1,i_2,\ldots\}\subseteq\mathbb N$,
  as above, so $\Vb_0(\Sb)$ denotes the subspace $\mathrm{span}\{\mathfrak e_{i_1},\mathfrak e_{i_2},\ldots\}$
  and the vectors $\{\mathfrak e_i\}_{i\in\Sb^{\circ}}$ span the subspace $\Vb_0^\perp(\Sb)$, orthogonal to $\Vb_0(\Sb)$.
  We can additively decompose any tangent vector $V=\mathrm{pr}_{\Sb}V+\mathrm{pr}_{\Sb^\circ}V$ and so,
  \begin{align*}
    \mathbb T_W^\parallel&=\bigl\{\mathrm{pr}_{\Sb}V\colon V\in T_W\mathrm{St}(\Hb,\Vb)\bigr\}\cong\Jf_2(\Vb),\\
    \mathbb T_W^\perp&=\bigl\{\mathrm{pr}_{\Sb^\circ}V\colon V\in T_W\mathrm{St}(\Hb,\Vb)\bigr\}\cong\mathfrak{gl}(\Vb).
  \end{align*}
  We saw in the introduction, in the discussion following Definition~\ref{def:nonlinearcanonicalsystem} for the canonical system,
  how this decomposition generates a coupled flow, for the field $Q_{\Sb}$ through the fibres
  and the field $G_{\Sb^\circ}$ through the base space.
\end{remark}
\begin{remark}[Integrable systems and Hankel subflows]
  When we study integrable systems in the subsequent sections, we consider evolutionary linear subflows
  of Hilbert--Schmidt operators which are Hankel operators. We discuss Hankel operators in
  detail in Section~\ref{sec:Hankelandlinear}, and evolutionary Hilbert--Schmidt Hankel subflows
  in detail in Section~\ref{sec:dispersivelinearPDES}.
\end{remark}

\section{Hilbert--Schmidt and Hankel operators}\label{sec:Hankelandlinear} 
We outline the main analytical ingredients we need. We consider Hilbert--Schmidt operators
which depend on both a spatial parameter $x\in\R$ and a time parameter $t\in[0,\infty)$.
In this section $\pa_t$ represents the partial derivative with respect to time while $\pa=\pa_x$
represents the partial derivative with respect to the parameter $x\in\R$.
Let $\Vb$ be the Hilbert space of square-integrable, complex matrix-valued functions on $(-\infty,0]$,
i.e.\/ $\Vb\coloneqq L^2\bigl((-\infty,0];\CC^{m}\bigr)$ for some $m\in\mathbb N$.
For any given Hilbert--Schmidt operator $F=F(x,t)\in\mathfrak J_2(\Vb)$
there exists a unique square-integrable kernel $f=f(y,z;x,t)$ with
$f\in L^2\bigl((-\infty,0]^{\times 2};\CC^{m\times m}\bigr)$
such that for any $\phi\in\Vb$ we have
\begin{equation*}
(F\phi)(y;x,t)=\int_{-\infty}^0f(y,z;x,t)\phi(z)\,\rd z.
\end{equation*}
Conversely, any such function $f\in L^2((-\infty,0]^{\times 2};\CC^{m\times m})$
defines an operator $F=F(x,t)$ in $\mathfrak J_2(\Vb)$ with (for each $x,t$):
\begin{equation*}
\|F\|_{\mathfrak J_2(\Vb)}=\|f\|_{L^2((-\infty,0]^{\times 2};\CC^{m\times m})}. 
\end{equation*}
See for example Simon~\cite[p.~23]{Simon:Traces}.
\begin{definition}[Kernel bracket]\label{def:Kernelbracket}
With reference to the Hilbert--Schmidt operator $F=F(x,t)$ just above, we
use the \emph{kernel bracket} notation $[F]$ to refer to the kernel $f=f(y,z;x,t)$
of $F$:
\begin{equation*}
[F](y,z;x,t)=f(y,z;x,t). 
\end{equation*}
For general Hilbert--Schmidt operators, $f$ only exists almost everywhere on $(-\infty,0]^{\times 2}$.
However in all applications below the operators we consider will have continuous kernels and
so $f$ makes sense pointwise, and in particular at $y=z=0$.
\end{definition}
Hilbert--Schmidt \emph{Hankel} operators play an essential role
in the inverse scattering prescription of integrable systems
such as the Korteweg--de Vries and nonlinear Schr\"odinger equations. 
Indeed, they are the crucial ingredient in their prescription as Fredholm Grassmannian flows. 
We now define what we mean by Hankel (or additive) operators with parameters,
prove a key product rule which relies on the properties of such operators, and
derive further properties associated with such operators that will be useful in
our Fredholm Grassmannian prescription of Korteweg--de Vries and nonlinear Schr\"odinger flows.
\begin{definition}[Hankel operator with parameter]\label{def:Hankel}
We say a time-dependent Hilbert--Schmidt operator $H\in\mathfrak J_2(\Vb)$ 
with corresponding square-integrable kernel $h$ 
is \emph{Hankel} or \emph{additive} with a parameter $x\in\R$ 
if its action, for any square-integrable function $\phi\in\Vb$, 
is given by (here $y\in(-\infty,0]$),
\begin{equation*}
(H\phi)(y;x,t)\coloneqq\int_{-\infty}^0h(y+z+x;t)\phi(z)\,\rd z.
\end{equation*}
\end{definition}
The crucial role played by such Hankel operators in the prescription and solution formulae
for classical integrable systems was recognised by P\"oppe~\cite{P-SG,P-KdV}. 
It is P\"oppe's formulation which we follow and generalise herein.
The key property of such kernels that we utilise is that the
kernel associated with the derivative $\pa=\pa_x$ of the product of
an arbitrary pair of such Hankel operators can be expressed as the matrix product
of the of their respective kernels. In particular for a serial composition
of operators of the form $FHH^\prime F^\prime$, where $H$ and $H^\prime$ are 
Hankel operators, we have the following key product rule given by P\"oppe~\cite{P-SG,P-KdV}. 
\begin{lemma}[Product rule]\label{lemma:productrule}
Assume $H$ and $H^\prime$ are \emph{Hankel} Hilbert--Schmidt operators with parameter $x\in\R$,
and $F$ and $F^\prime$ are Hilbert--Schmidt operators, all on $\Vb$.
Assume further that the kernels corresponding
to $F$ and $F^\prime$ are continuous and those corresponding to $H$ and $H^\prime$
are continuously differentiable, on $(-\infty,0]^{\times 2}$.
Then the following \emph{product rule} holds for all $(y,z)\in(-\infty,0]^{\times 2}$:
\begin{equation*}
[F\pa_x(HH^\prime)F^\prime](y,z;x)=\bigl([FH](y,0;x)\bigr)\bigl([H^\prime F^\prime](0,z;x)\bigr).
\end{equation*}
\end{lemma}
\begin{proof}
This is a consequence of the fundamental theorem
of calculus and Hankel properties of $H$ and $H^\prime$.
Let $f$, $h$, $h^\prime$ and $f^\prime$ denote the integral kernels 
of $F$, $H$, $H^\prime$ and $F^\prime$ respectively.
By direct computation $[F\pa_x(HH^\prime)F^\prime](y,z;x)$ equals
\begin{align*}
&\int_{(-\infty,0]^{\times 3}}
f(y,\xi_1)\pa_x\bigl(h(\xi_1+\xi_2+x)h^\prime(\xi_2+\xi_3+x)\bigr)
f^\prime(\xi_3,z)\,\rd \xi_3\,\rd \xi_2\,\rd \xi_1\\
&=\int_{(-\infty,0]^{\times 3}}
f(y,\xi_1)\pa_{\xi_2}\bigl(h(\xi_1+\xi_2+x)h^\prime(\xi_2+\xi_3+x)\bigr)
f^\prime(\xi_3,z)\,\rd \xi_3\,\rd \xi_2\,\rd \xi_1\\
&=\int_{(-\infty,0]^{\times 2}}
f(y,\xi_1)h(\xi_1+x)h^\prime(\xi_3+x)f^\prime(\xi_3,z)\,\rd \xi_3\,\rd \xi_1\\
&=\int_{(-\infty,0]}f(y,\xi_1)h(\xi_1+x)\,
\rd \xi_1\cdot\int_{(-\infty,0]}h^\prime(\xi_3+x)f^\prime(\xi_3,z)\,\rd \xi_3\\
&=\bigl([FH](y,0;x)\bigr)\bigl([H^\prime F^\prime](0,z;x)\bigr),
\end{align*}
giving the result. 
\qed
\end{proof}
\begin{remark}
  We naturally assume the complex-matrix valued kernels corresponding to the operators
  in the statement of Lemma~\ref{lemma:productrule} are commensurate so the compositions
  of the operators shown makes sense.
\end{remark}
As we stated above, a given matrix-valued function on $(-\infty,0]^{\times 2}$
generates a Hilbert--Schmidt operator on $\Vb$ iff the matrix-valued function
lies in $L^2\bigl((-\infty,0]^{\times 2};\CC^{m\times m}\bigr)$. This provides
a simple test for whether a given operator is Hilbert--Schmidt valued, or conversely,
if a given matrix-valued function generates a Hilbert--Schmidt operator.
If the Hilbert--Schmidt operator in question is a Hankel operator, or
we are given a function on $(-\infty,0]$ and we wish to know if it
generates a Hilbert--Schmidt Hankel operator, then the following 
result provides another simple test. Note, for a non-negative function $w$,
we denote the weighted $L^2$ norm of any complex matrix-valued function
$f$ on the domains $(-\infty,0]$ or $\R$ by
\begin{equation*}
\|f\|_{L_w}^2\coloneqq\int_{-\infty}^0\mathrm{tr}\,\bigl(f^\dag(x)f(x)\bigr)\,w(x)\,\rd x,
\end{equation*}
with the integral over $\R$ for the corresponding domain.
%
\begin{lemma}[Test for a Hilbert--Schmidt Hankel operator]\label{lemma:HankelHS}
Let $w_2\colon(-\infty,0]\to(0,\infty)$ denote the weight function $w_2\colon x\mapsto(1-x)^2$.
Suppose the matrix-valued function $h\in L^2_{w_2}((-\infty,0];\CC^{m\times m})$.
Then the Hankel operator $H$ generated by $h$ on $(-\infty,0]$ is Hilbert--Schmidt valued,
i.e.\/ $H\in\mathfrak J_2(\Vb)$ with $\Vb\coloneqq L^2((-\infty,0];\CC^{m})$.
\end{lemma}
\begin{proof}
By a standard change of variables $\xi=y+z$ and $\eta=y-z$ 
we have (see for example Power~\cite{Power})
\begin{align*}
\int_{-\infty}^0\int_{-\infty}^0\mathrm{tr}\,\bigl(h^\dag(y+z;t)h(y+z;t)\bigr)\,\rd y\,\rd z
&=\int_{-\infty}^0\int_{\xi}^{-\xi}\mathrm{tr}\,\bigl(h^\dag(\xi;t)h(\xi;t)\bigr)\,\rd \eta\,\rd\xi\\
&=\int_{-\infty}^0\mathrm{tr}\,\bigl(h^\dag(\xi;t)h(\xi;t)\bigr)\,(2|\xi|)\,\rd\xi\\
&\leqslant 4\,\|h\sqrt{w_2}\|_{L^2((-\infty,0];\CC^{m\times m})}^2.
\end{align*}
Thus if $h\in L^2_{w_2}((-\infty,0];\CC^{m\times m})$
then we observe that the corresponding Hankel operator $H\in\mathfrak J_2(\Vb)$.
\qed
\end{proof}
Proving that a given kernel function generates a trace class operator is generally much
more involved. See Smithies~\cite{Smithies}, Simon~\cite[p.~23--4]{Simon:Traces},
as well as Bornemann~\cite[p.~878--9]{Bornemann}, who provides a useful list of criteria.

We round off this section with an important result which will help to reveal the connection
between the bracket operator, in particular when the image kernel is evaluated at $(y,z)=(0,0)$,
and classical expressions for the solution to the Korteweg--de Vries equation in terms
of Fredholm determinants. We show this connection at the end of Section~\ref{sec:KdV}.
We adapt and extend results that can be found in P\"oppe~\cite{P-KdV}.
To state our first result we need to define the linear \emph{quasi-trace} operator as follows.
\begin{definition}[Quasi-trace]\label{def:quasi-trace}
  Suppose that $F\in\Jf_1(\Vb)$ is a trace-class operator with matrix valued kernel $f$.
  Then we define the \emph{quasi-trace} `$\mathrm{qtr}\,F$' of $F$ to be
  \begin{equation*}
    \mathrm{qtr}\,F\coloneqq\int_{-\infty}^0f(y,y)\,\rd y,
  \end{equation*}
  provided the integral shown of each of the matrix components of $f$ is finite.
\end{definition}
The usual trace of $F$ would involve the matrix trace of $f$ in the integrand in the definition.
For scalar kernels, naturally the quasi-trace `$\mathrm{qtr}$' and the usual operator trace `$\mathrm{tr}$' coincide.
For operators $F$ with continuous kernels $f=f(y,z;x,t)$, which may for example also depend on the parameters $x\in\R$ and $t\geqslant0$,
we define the bracket operation $[F]_{0,0}$ to be $f(0,0;x,t)$. In other words we can think of $[\,\cdot\,]_{0,0}$
as the composition of the bracket operator and evaluating the kernel at $y=z=0$:
\begin{equation*}
[F]_{0,0}=\left.f(y,z)\right|_{y=z=0}.
\end{equation*}
\begin{remark}[Continuous kernels]
All the operators we consider herein, to which we apply the bracket operator, have continuous kernels. 
\end{remark}
Our result is as follows. 
\begin{lemma}[Quasi-trace identity]\label{lemma:quasi-traceidentity}
Assume $H$ and $H^\prime$ are \emph{Hankel} Hilbert--Schmidt operators,
and $F$ is a Hilbert--Schmidt operator, all on $\Vb$ and
with parameter $x$. Assume the kernel corresponding
to $F$ is continuous  on $(-\infty,0]^{\times 2}$ and those corresponding to $H$ and $H^\prime$
are continuously differentiable on $(-\infty,0]$.
Then we have the following \emph{quasi-trace identity}:
\begin{equation*}
[HFH^\prime]_{0,0}\equiv\mathrm{qtr}\,\bigl((\pa_xH)FH^\prime+HF(\pa_xH^\prime)\bigr).
\end{equation*}
\end{lemma}
\begin{proof}
We observe that the right-hand side equals
\begin{align*}
  \int_{-\infty}^0\int_{-\infty}^0\int_{-\infty}^0
  \bigl(\pa_xh(y+&z+x)\,f(z,\zeta;x)\,h^\prime(\zeta+y+x)\\
  &+h(y+z+x)\,f(z,\zeta;x)\,\pa_xh^\prime(\zeta+y+x)\bigr)\,\rd z\rd\zeta\rd y.
\end{align*}
We can replace both partial derivatives with respect to $x$ shown in the integrand by partial derivatives 
with respect to $y$. If we then focus on the integral involving the first term in the integrand and use
the partial integration formula, the boundary term generated equals the left-hand side quantity in the
identity stated in the lemma, while the integral term generated exactly cancels the second term in the 
integrand shown.
\qed
\end{proof}
%

\section{Linear dispersive partial differential equations}\label{sec:dispersivelinearPDES}
In our main applications, we construct Hankel operators from the solutions to linear dispersive partial differential
systems representing the linearised versions of equations from the noncommutative Korteweg-de Vries and nonlinear Schr\"odinger
hierarchies. Hence in general consider complex matrix-valued solutions $p=p(\,\cdot\,;t)$ to the
following linear dispersive partial differential system:
\begin{equation*}
\pa_tp=-\mu_n(\mathrm{i}\mathcal I)^{n-1}\pa^np,
\end{equation*}
where the constant $\mu_n\in\R$ and the block diagonal matrix $\mathcal I$ has the form,
\begin{equation*}
\mathcal I\coloneqq\begin{pmatrix} -\id & O\\ O & \id\end{pmatrix}.
\end{equation*}
Note that the identity `$\id$' and zero `$O$' operators shown are matrix-valued.
Indeed we suppose $p$ is $m\times m$ matrix-valued, with $m$ \emph{even}.
Further note that $\mathcal I^2=\id$, the $m\times m$ matrix-valued identity operator.
For convenience we denote the operator shown as,
\begin{equation*}
d(\pa)\coloneqq-\mu_n(\mathrm{i}\mathcal I)^{n-1}\pa^n.
\end{equation*}
This operator satisfies the \emph{dispersive property} $(d(\mathrm{i}\kappa))^\dag=-d(\mathrm{i}\kappa)$
for any $\kappa\in\R$, since $\mathcal I^\dag=\mathcal I$. 
In principle we could consider more general forms for $d=d(\pa)$ which satisfy
the dispersive property. For example, we could consider constant coefficient
polynomial forms for $d=d(\pa)$ satisfying the dispersive property; see for example Malham~\cite{Malham:quinticNLS}
However the form above is sufficiently general for our purposes herein.
We note that if $n=2$, then $\pa_tp=-\mathrm{i}\mu_2\mathcal I\pa^2p$.
This matches the form for the linearised noncommutative nonlinear Schr\"odinger equation,
with the scalar case corresponding to $m=1$; see Section~\ref{sec:NLS} for how this is interpreted.
If $n=3$, then $\pa_tp=\mu_3I_{2m}\pa^3p$. This matches the form for the
linearised noncommutative, either modified or standard, Korteweg--de Vries equation.
The scalar case corresponds to $m=1$ for the modified Korteweg--de Vries equation; again see Section~\ref{sec:NLS}.
For $n=4$, we get $\pa_tp=\mathrm{i}\mu_4\mathcal I\pa^4p$. This matches
the quartic order linearised noncommutative nonlinear Schr\"odinger equation, and so forth.
With regards the Korteweg--de Vries equation itself, suppose $n$ is odd so that $n=2\ell+1$.
Then the equation for $p$ becomes $\pa_tp=(-1)^\ell\mu_{2\ell+1}I_{2m}\pa^{2\ell+1}p$, and indeed, there
is no need to restrict ourselves to even $m$ but can suppose $m\in\mathbb N$.
Alternatively, in the setting of Section~\ref{sec:NLS} with $m$ even,
we can suppose we have two exact copies of the equation present
(take $P_\beta\equiv P_\alpha$ therein, and so forth).

We have already seen from Lemma~\ref{lemma:HankelHS} that, if we know
$p(\,\cdot\,;t)\in L^2_{w_2}((-\infty,0];\CC^{m\times m})$ with $w_2\colon y\mapsto(1-y)^2$, then the 
Hankel operator $P=P(t)$ generated by $p$ is such that $P(t)\in\mathfrak J_2(\Vb)$
with $\Vb=L^2((-\infty,0];\CC^m)$.
We need to construct Hankel operators $P=P(x;t)$ with kernels of the form $p=p(y+z+x;t)$ from
solutions $p=p(y;t)$ to the linear system $\pa_tp=-\mu_n(\mathrm{i}\mathcal I)^{n-1}\pa_y^np$,
where $(y,z)\in(-\infty,0]^{\times2}$, with the parameter $x\in\R$.
Naturally any statements we make for $p(\,\cdot\,;t)$ on $(-\infty,0]$,
\emph{equivalently translate}, for each $x\in\R$, to statements for $p=p(\,\cdot\,+x;t)$ on $(-\infty,x]$. 
This subtlety is important, as some natural solutions $p=p(y;t)$ to the linear dispersive partial differential system above
that occur, in particular those that generate soliton solutions as we see in Section~\ref{sec:KdV} 
and Appendix~\ref{sec:scatteringproblem}, are unbounded as $y\to+\infty$. In fact, those corresponding
solutions $p=p(y;t)$ that generate soliton solutions, grow exponentially.
In particular consider the solutions $p=p(y;t)$ of the form $p(y;t)=2A\exp(Ay+A^3t)$, for some constant square matrix $A$,
all of whose entries are positive. Such solutions satisfy $p(\,\cdot\,;t)\in L^2_{w_2}((-\infty,0];\CC^{m\times m})$,
or equivalently $p(\,\cdot\,+x;t)\in L^2_{w_2}((-\infty,x];\CC^{m\times m})$ for each $x\in\R$.
The corresponding Hankel operator $P=P(x;t)$ constructed from such a solution is thus Hilbert--Schmidt valued.   
Indeed, more generally via this argument, solutions $p=p(y;t)$ that grow as $y\to+\infty$, are thus not
precluded from generating Hilbert--Schmidt or trace class operators $P(x;t)$, provided they satisfy
the respective regularity conditions on $(-\infty,0]$ stated above.

Establishing regularity conditions for solutions to the linear system of equations
$\pa_tp=-\mu_n(\mathrm{i}\mathcal I)^{n-1}\pa^np$ on $(-\infty,0]$ is not an endeavour we pursue herein,
other than through important case-by-case observations such as for $p(y;t)=2A\exp(Ay+A^3t)$, as just mentioned,
or further cases mentoned in Section~\ref{sec:KdV} and Appendix~\ref{sec:scatteringproblem}, as well as
those which are profiled in detail for the scalar case for the Korteweg--de Vries equation by P\"oppe~\cite{P-KdV}.
We discuss these in Section~\ref{sec:KdV}. However if we focus on solutions to the linear system
on the whole real line $\R$, establishing regularity results for solutions $p=p(y;t)$ is more straightforward,
as we see in Lemma~\ref{lemma:dispersivePDE} just below. Naturally any regularity properties we establish for
$p=p(\,\cdot\,;t)$ on $\R$ are inhereted by $p=p(y+z+x;t)$. Indeed we can establish quite general conditions
for which the solutions $p=p(\,\cdot\,;t)$ generate corresponding Hilbert--Schmidt operators $P(x;t)$.
For any given integrable complex matrix-valued function $f\in L^1(\R;\CC^{m\times m})$,
we denote its Fourier transform by $\mathfrak f=\mathfrak f(k)$
and define its inverse by the respective formulae,
\begin{equation*}
\mathfrak f(k)\coloneqq\int_\R f(x)\mathrm{e}^{2\pi\mathrm{i}kx}\,\rd x
\qquad\text{and}\qquad
f(x)\coloneqq\int_\R \mathfrak f(k)\mathrm{e}^{-2\pi\mathrm{i}kx}\,\rd k.
\end{equation*}
The following result summarises the properties of solutions to
linear dispersive system above on the whole real line $\R$.
A similar result is stated in Doikou \textit{et al.\/} \cite{DMS}.
For any $\ell\in\mathbb N$, $H^\ell(\R;\CC^{m\times m})$ denotes
the space of square-integrable $\CC^{m\times m}$-valued functions, all of
whose derivatives up to and including the $\ell$th derivative, are also square-integrable.
\begin{lemma}[Dispersive linear equation properties on the real line]\label{lemma:dispersivePDE}
Assume the $\CC^{m\times m}$-valued function $p=p(y;t)$ is a solution to the general 
dispersive linear partial differential equation $\pa_tp=-\mu_n(\mathrm{i}\mathcal I)^{n-1}\pa_y^np$ on $\R$.
Let $w=w(\,\cdot\,)$ denote an arbitrary real polynomial function on the real line with constant non-negative coefficients, 
and let $\wf=\wf(k)$ denote the Fourier transform of the operator $w(\pa_y)$ that acts multiplicatively in Fourier space,
i.e.\/ the Fourier transform of $w(\pa_y)p(y;t)$ is $\wf(k)\pf(k;t)$, where $\pf=\pf(k;t)$ is the Fourier transform of $p=p(y;t)$.
Further, let $W\colon\R\to(0,\infty)$ denote the specific function $W\colon y\mapsto1+y^2$. Then $p=p(y;t)$ and $\pf=\pf(k;t)$
satisfy the following properties for all $k\in\R$ and $t\geqslant0$
($d'$ below is the derivative of $d$):\smallskip

(i) $\|\pa\pf\|_{L^2}^2\leqslant(2\pi)^2\|W^{1/2}p\|_{L^2}^2$;\smallskip

(ii) $p(0)\in H^\infty(\R;\CC^{m\times m})$ $\Rightarrow$ $\pf(0)\in L^2_{\wf}(\R;\CC^{m\times m})$;\smallskip

(iii) $p(0)\in L^2_W(\R;\CC^{m\times m})$ $\Rightarrow$ $\pf(0)\in H^1(\R;\CC^{m\times m})$;\smallskip

(iv) $\pf(k;t)=\exp\bigl(td(2\pi\mathrm{i}k)\bigr)\pf(k;0)$;\smallskip

(v) $\pf^\dag(k;t)\pf(k;t)=\pf^\dag(k;0)\pf(k;0)$;\smallskip

(vi) $\|w(\pa)p(t)\|_{L^2}^2=\|\wf\pf(t)\|_{L^2}^2=\|\wf\pf(0)\|_{L^2}^2=\|w(\pa)p(0)\|_{L^2}^2$;\smallskip

(vii) $p(0)\in H^\infty(\R;\CC^{m\times m})$ $\Rightarrow$ $p(t)\in H^\infty(\R;\CC^{m\times m})$;\smallskip

(viii) $\|\pa\pf(t)\|_{L^2}^2\leqslant 2\bigl((2\pi)^2t^2\|d^\prime\pf(0)\|_{L^2}^2+\|\pa\pf(0)\|_{L^2}^2\bigr)$;\smallskip

(ix) $\pf(0)\in H^1(\R;\CC^{m\times m})\cap L^2_{|d^\prime|^2}(\R;\CC^{m\times m})$ $\Rightarrow$ $\pf(t)\in H^1(\R;\CC^{m\times m})$;\smallskip

(x) $\|W^{1/2}p(t)\|_{L^2}^2=\|p(0)\|_{L^2}^2+(2\pi)^{-2}\|\pa\pf(t)\|_{L^2}^2$;\smallskip

(xi) $p(0)\in L^2_W(\R;\CC^{m\times m})\cap H^{\mathrm{deg}(d^\prime)}(\R;\CC^{m\times m})$ $\Rightarrow$ $p(t)\in L^2_W(\R;\CC^{m\times m})$.
\end{lemma}
\begin{proof}
Let $V\colon\R\to[0,\infty)$ denote the function $V\colon y\mapsto y^2$.
We establish the results in order as follows: 
(i) By the Plancherel Theorem and the definition of the Fourier transform
we observe $\|\pa\pf\|_{L^2}^2=(2\pi)^2\|V^{1/2}p\|_{L^2}^2$ which we 
then combine with the fact $\|V^{1/2}p\|_{L^2}^2\leqslant\|W^{1/2}p\|_{L^2}^2$;
(ii) This follows again by the Plancherel Theorem and standard properties
of the Fourier transform;
(iii) This follows from (i) applied at time $t=0$;
(iv) and (v) These follow by directly solving the linear differential
equation for the general dispersive equation in Fourier space;
(vi) The first and third equalities follow by the Plancherel theorem, 
while the second uses (v); (vii) Follows from (vi) and that $w$ is arbitrary;
(viii) The derivative with respect to $k$ of the explicit solution from (iv)
generates $\pa\pf(t)=\exp\bigl(td(2\pi\mathrm{i}k)\bigr)(t(2\pi\mathrm{i})d^\prime\pf(k;0)+\pa\pf(k;0))$,
where $d^\prime$ denotes the derivative of $d$. Taking the complex conjugate
of this and using both expressions to expand $\pa\pf^\dag(t)\pa\pf(t)$
generates the inequality shown when we integrate with respect to $k$ and 
use the Cauchy--Schwarz and Young inequalities;
(ix) Follows from (viii); (x) We observe 
$\|W^{1/2}p(t)\|_{L^2}^2=\|p(t)\|_{L^2}^2+\|V^{1/2}p(t)\|_{L^2}^2=\|p(0)\|_{L^2}^2+(2\pi)^{-2}\|\pa\pf(t)\|_{L^2}^2$,
where we used the equality stated in the proof of (i) just above;
(xi) This follows from (iii), (ix) and (x).
\qed
\end{proof}
\begin{corollary}[Hankel Hilbert--Schmidt operator on the real line]\label{corollary:HankelHilbertSchmidt}
If the inital data $p(0)\in L^2_W(\R;\CC^{m\times m})\cap H^{\mathrm{deg}(d^\prime)}(\R;\CC^{m\times m})$
then for any $t\geqslant0$, we know that $p(t)\in L^2_W((-\infty,0];\CC^{m\times m})$ and $P(t)\in\mathfrak J_2(\Vb)$
with $\Vb=L^2((-\infty,0];\CC^m)$.
\end{corollary}
\begin{proof}
  Using (xi) from Lemma~\ref{lemma:dispersivePDE}, the assumptions
  for $p(0)$ imply $p(t)\in L^2_W(\R;\CC^{m\times m})$ which in turn implies
  $p(t)\in L^2_W\bigl((-\infty,0];\CC\bigr)$. Invoking Lemma~\ref{lemma:HankelHS}
  establishes the result after noting that for $y\in(-\infty,0]$ we know $(1-y)^2\leqslant 3\,(1+y^2)$.
  \qed
\end{proof}
We refer back to our discussion preceding Lemma~\ref{lemma:dispersivePDE}. 
In Corollary~\ref{corollary:HankelHilbertSchmidt}, we see how simple assumptions
on the initial data $p(\,\cdot\,;0)$ on the whole real line $\R$,
establish that Hankel operators $P=P(x,t)$ constructed from the corresponding solutions $p=p(y+z+x;t)$
to the linear dispersion equation, are Hilbert--Schmidt valued, i.e.\/ $P(t)\in\mathfrak J_2(\Vb)$
with $\Vb=L^2\bigl((-\infty,0];\CC^m\bigr)$.
However, the assumptions stated in Lemma~\ref{lemma:dispersivePDE} and Corollary~\ref{corollary:HankelHilbertSchmidt},
preclude data $p(0)$ that might grow as $y\to+\infty$, and thus 
preclude classes of scattering data such as those generating soliton or multi-soliton solutions.

The following prescription of linear equations representing a Fredholm Grassmannian flow in the
top cell $\Lambda_0(\Hb,\Vb)$ covers both the noncommutative Korteweg--de Vries and 
nonlinear Schr\"odinger hierarchies.
\begin{definition}[Prescription: linearised integrable system]\label{prescription:KdVNLS}
Assume the Hilbert--Schmidt operator $P=P(x,t)$ is a Hankel operator in the sense given in Definition~\ref{def:Hankel}, 
with kernel $p=p(y+z+x;t)$. Suppose $Q=Q(x,t)$ is a Fredholm operator of the form $Q=\id-\hat Q$
with $\hat Q=\hat Q(x,t)$ a Hilbert--Schmidt operator and $G=G(x,t)$ is a Hilbert--Schmidt operator.
Assume that for some integer $n\geqslant2$, the operators $P$, $\hat Q$ and $G$ satisfy
the system of linear equations for some constant $\mu_n\in\R$: 
\begin{align*}
\pa_t P&=-\mu_n(\mathrm{i}\mathcal I)^{n-1}\pa_x^nP,\\
\hat Q&=P^2,\\
P&=G(\id-\hat Q),
\end{align*}
in the noncommutative nonlinear Schr\"odinger and modified Korteweg--de Vries cases.
In the Korteweg--de Vries case we set $\hat Q=P$ instead.
The first equation is equivalent to the condition that
the Hankel kernel $p$ satisfies $\pa_tp=\mu_n(\mathrm{i}\mathcal I)^{n-1}\pa_x^np$.
We assume $p$ is sufficiently regular for this equation to make sense.
\end{definition}
\begin{remark}[Motivation]
  That we set $Q=\id-P$ or $Q=\id-P^2$ respectively in the Korteweg--de Vries or nonlinear Schr\"odinger cases,
  is motivated by the corresponding assumptions in Zakharov and Shabat~\cite{ZS} and Ablowitz \emph{et al.}\/ \cite{ARSII}.
\end{remark}
We establish the sense in which a solution to the prescription above exists, for both cases.
The following result collates the two cases outlined in Lemmas~\ref{lemma:HankelHS}, \ref{lemma:dispersivePDE}
and Corollary~\ref{corollary:HankelHilbertSchmidt}; it also utilises the results of Lemma~\ref{lemma:EUCanonical}.
Recall, for some $\nu>0$, the functions $w_\nu\colon(-\infty,0]\to(0,\infty)$ denote weight functions
of the form $w_\nu\colon x\mapsto(1-x)^\nu$. The linear Fredholm equation $P=G(\id-\hat Q)$ given in the
Definition~\ref{prescription:KdVNLS} just above, assuming the kernels concerned exist, which
we establish in the present result, takes the form:
\begin{equation*}
p(y+z+x;t)=g(y,z;x,t)-\int_{-\infty}^0g(y,\xi;x,t)\hat q(\xi,z;x,t)\,\rd\xi.
\end{equation*}
For the Korteweg--de Vries case $\hat q(\xi,z;x,t)\coloneqq p(\xi+z+x;t)$, whereas for the
nonlinear Schr\"odinger and modified Kortewed--de Vries cases it is given by
\begin{equation*}
\hat q(y,z;x,t)\coloneqq\int_{-\infty}^0p(y+\xi+x;t)p(\xi+z+x;t)\,\rd\xi.
\end{equation*}
\begin{lemma}[Existence and Uniqueness: Linear integrable system]\label{lemma:KdVprescription}
  Assume the smooth initial data $p_0=p_0(\,\cdot\,)$ for $p=p(\,\cdot\,;t)$ is such that $\mathrm{det}_2(\id-\hat Q_0)\neq 0$,
  where $\hat Q_0$ is the operator defined in terms of the Hankel operator $P_0$ generated by $p_0$, as described
  just above for the two cases. We have the following results:

(I) Assume there exists a $T>0$ such that there is a solution
\begin{equation*}
p\in C^\infty\Bigl([0,T];L^2_{w_2}\bigl((-\infty,0];\CC^{m\times m}\bigr)\cap C^\infty\bigl((-\infty,0];\CC^{m\times m}\bigr)\Bigr),
\end{equation*}
to the linear equation for $p$ in Definition~\ref{prescription:KdVNLS}.
Then there exists a $T'>0$ with $T'\leqslant T$ such that for $t\in[0,T']$ we know:
(i) The Hankel operator $P=P(x,t)$ with parameter $x\in\R$ generated by $p$ is Hilbert--Schmidt valued, i.e.\/ $P(x,t)\in\mathfrak J_2(\Vb)$
with $\Vb\coloneqq L^2\bigl((-\infty,0];\CC^{m}\bigr)$;
(ii) The determinant $\mathrm{det}_2\bigl(\id-\hat Q(x,t)\bigr)\neq0$ and hence
(iii) There is a unique Hilbert--Schmidt valued solution $G=G(x,t)$ with $G(x,t)\in C^\infty\bigl([0,T'];\Jf_2(\Vb)\bigr)$
to the linear Fredholm equation $P=G(\id-\hat Q)$. 

(II) Assume $p_0\in L^2_W(\R;\CC^{m\times m})\cap H^{\infty}(\R;\CC^{m\times m})$, where $W\colon x\mapsto1+x^2$.
Then for any $T>0$ there is a solution
\begin{equation*}
p\in C^\infty\bigl([0,T];L^2_W(\R;\CC^{m\times m})\cap H^\infty(\R;\CC^{m\times m})\bigr),
\end{equation*}
to the linear equation for $p$ in Definition~\ref{prescription:KdVNLS}.
Further, for $t\in[0,T]$ we know that:
(i) The Hankel operator $P=P(x,t)$ with parameter $x\in\R$ generated by $p$ is Hilbert--Schmidt valued,
i.e.\/ $P(x,t)\in\mathfrak J_2(\Vb)$ with $\Vb\coloneqq L^2\bigl((-\infty,0];\CC^{m}\bigr)$;
(ii) The determinant $\det_2\bigl(\id-\hat Q(x,t)\bigr)\neq0$ and hence
(iii) There is a unique Hilbert--Schmidt valued solution $G=G(x,t)$ with $G(x,t)\in C^\infty\bigl([0,T'];\Jf_2(\Vb)\bigr)$
to the linear Fredholm equation given by $P=G(\id-\hat Q)$.
\end{lemma}
%
%
\begin{remark}
In result (I) we \emph{assume} there exists a solution $p$ to the 
linear equation for $p$ in Definition~\ref{prescription:KdVNLS} for $t\in[0,T]$.
However in result (II), from the assumptions on the initial data, we utilise
Lemma~\ref{lemma:dispersivePDE} to \emph{establish} a solution $p$ exists with the
appropriate properties. Note that in this result, the time regularity of $p$ follows directly from
the spatial regularity assumed on the initial data. In each of the three results, the
statements (ii) and (iii) follow from the existence and uniqueness Lemma~\ref{lemma:EUCanonical}.
As already mentioned, result (II) does not apply to scattering data $p$ relevant to
soliton solutions due to the integrability assumptions we require for $p_0$ on $\R$.
For the nonlinear Schr\"odinger case with $\hat Q=P^2$, for results (I) and (II),
the operators $\hat Q=\hat Q(x,t)$ will be trace class, and the usual Fredholm determinant
$\mathrm{det}_1\bigl(\id-\hat Q(x,t)\bigr)$ can be used in place of the regularised determinant shown.
\end{remark}
\begin{remark}[Smoothness]
The smoothness of $G=G(x,t)$ is determined by the smoothness of $P=P(x,t)$.
\end{remark}
\begin{remark}
Classical regularity results for the linear Korteweg--de Vries equation, for example, can be found 
in Craig and Goodman~\cite{CG}. 
\end{remark}
We now provide some solution formulae for solutions to the linearised integrable system in Definition~\ref{prescription:KdVNLS}
for the Korteweg--de Vries case when $\hat Q=P$, keeping in mind the results of Lemma~\ref{lemma:KdVprescription}.
Recall the quasi-trace from Definition~\ref{def:quasi-trace}.
\begin{lemma}[Korteweg--de Vries solution formulae]
Suppose $P=P(x,t)$ and $G=G(x,t)$ are solutions to the linearised integrable system
in Definition~\ref{prescription:KdVNLS} with $\hat Q=P$, corresponding to either
of the results (I) or (II) in Lemma~\ref{lemma:KdVprescription}.
Then the square matrix-valued kernel $g$ corresponding to to $G$ satisfies
\begin{equation*}
  g(0,0;x,t)=\mathrm{qtr}\,\bigl((\pa_x P)U+U(\pa_x P)\bigr).
\end{equation*}
If all kernels are scalar, then $g(0,0;x,t)=2\,\mathrm{tr}\,\bigl((\pa_x P)U\bigr)$,
and if we restrict $P=P(x,t)$ to multi-soliton scattering data so that it is of finite-rank and
thus trace class (see our discussion preceding Lemma~\ref{lemma:dispersivePDE} as well as Remark~\ref{rmk:generalsolutions}
and Appendix~\ref{sec:scatteringproblem}), then we have the classical result $g(0,0;x,t)=-2\pa_x\log\mathrm{det}_1(\id-P)$.
\end{lemma}
\begin{proof}
Using the quasi-trace identity Lemma~\ref{lemma:quasi-traceidentity}
with $H=H^\prime=P$ and $F=U$ we have,
\begin{align*}
  &&[PUP]_{0,0}&=\mathrm{qtr}\,\bigl((\pa_xP)UP+PU(\pa_xP)\bigr)\\
  \Leftrightarrow &&
[PU]_{0,0}-[P]_{0,0}&=\mathrm{qtr}\,\bigl((\pa_xP)U+U(\pa_xP)-2(\pa_xP)\bigr), 
\end{align*}
where we used the identity $U=\id+PU=\id+UP$. We observe that $2\,\mathrm{qtr}\,(\pa_xP)$ equals
\begin{equation*}
2\int_{-\infty}^0\pa_xp(y+y+x;t)\,\rd y
=\int_{-\infty}^0\pa_xp(\eta+x;t)\,\rd\eta
=\int_{-\infty}^0\pa_\eta p(\eta+x;t)\,\rd\eta,
\end{equation*}
which in turn equals $p(x;t)$ which equals $[P]_{0,0}$, giving the first result since $G=PU$.
The second (trace) result follows, as in the scalar case the quasi-trace coincides with
the usual trace, and we can invariantly rotate factors in the argument of the trace operator.
The third (determinant) result follows, since by standard analysis, we have
\begin{equation*}
  \mathrm{tr}\,\bigl((\pa_xP)U\bigr)=-\mathrm{tr}\,\Bigl(\bigl(\pa_x(\id-P)\bigr)(\id-P)^{-1}\Bigr)
  =-\frac{\pa_x\mathrm{det}_1(\id-P)}{\mathrm{det}_1(\id-P)},
\end{equation*}
giving the result.\qed
\end{proof}

\section{Potential Korteweg de Vries equation}\label{sec:KdV}
We carry through P\"oppe's programme for the noncommutative potential Korteweg--de Vries equation.
Consider the linear integrable system in Definition~\ref{prescription:KdVNLS}
for the case of the potential Korteweg--de Vries equation when $\hat Q=P$,
with $n=3$ and $\mu\coloneqq\mu_3\in\R$. The following result was 
originally proved by P\"oppe~\cite{P-KdV} in the scalar case.
\begin{theorem}[Potential Korteweg--de Vries decomposition]\label{thm:KdV}
Assume the kernel $p=p(\,\cdot\,;t)$ generating the Hankel operator $P=P(t)$
satisfies either of the sets of assumptions in (I) or (II),
in the existence and uniqueness Lemma~\ref{lemma:KdVprescription}.
Then assume the Hankel Hilbert--Schmidt operator $P=P(x,t)$ generated by $p=p(\,\cdot\,+x;t)$
for any $x\in\R$, and the (necessarily) Hilbert--Schmidt operator $G=G(x,t)$ 
satisfy the linear system of equations given in Definition~\ref{prescription:KdVNLS}
for the case of the potential Korteweg--de Vries equation, for $t\in[0,T]$ for some $T>0$.
Then the integral kernel $g=g(y,z;x,t)$ corresponding to $G=G(x,t)$ 
satisfies the non-local noncommutative potential Korteweg--de Vries equation for every $t\in[0,T]$ given by,
\begin{equation*}
\pa_tg(y,z;x,t)=\mu\pa_x^3g(y,z;x,t)-3\mu\pa_x g(y,0;x,t)\pa_x g(0,z;x,t).
\end{equation*}
We observe $g=g(0,0;x,t)$ satisfies the potential Korteweg--de Vries equation,
\begin{equation*}
\pa_tg=\mu\pa_x^3g-3\mu(\pa_x g)^2.
\end{equation*}
\end{theorem}
To establish Theorem~\ref{thm:KdV}, we need to show the integral
kernel $g(y,z;x,t)=[G](y,z;x,t)$ satisfies the nonlocal equation shown.
We achieve this via the P\"oppe algebra which we introduce here.
The original proof of this result can be found in Doikou \textit{et al.\/} \cite{DMS}.
However the P\"oppe algebra approach provides the natural structure for investigating
the integrability of higher order equations in the non-commutative potential Korteweg--de Vries
hierarchy, as was demonstrated in Malham~\cite{Malham:KdVhierarchy}, providing the rationale for
our pursuit of this approach here. We adapt this combinatorial algebraic structure for Hilbert--Schmidt operators
in Section~\ref{sec:NLS}, to establish integrability for the noncommutative nonlinear Schr\"odinger
and modified Korteweg--de Vries equations.
These combinatorial algebraic structures break down the problem of establishing integrability to the problem of determining
the existence of suitable polynomial expansions, which in turn translates to the problem of
solving an overdetermined linear algebraic equation for the polynomial coefficients.
The rest of this section is devoted to developing the P\"oppe algebra and then proving
Theorem~\ref{thm:KdV}, together with, at the end, some obersvations on specific solutions
and solution forms.

Let us begin with some calculus results we require just below as well as in Section~\ref{sec:NLS}.
First we need to define the following character coefficients.
Let $\mathbb N^\ast$ denote the free monoid of words on $\mathbb N$, i.e.\/ the set
of all possible words of the form $a_1a_2\cdots a_k$ we can construct from letters $a_1, a_2, \ldots, a_k\in\mathbb N$.
\begin{definition}[Signature character]\label{def:signaturecharacter}
  Suppose $a_1a_2\cdots a_n\in\mathbb N^\ast$. The \emph{signature character} $\chi\colon\mathbb N^\ast\to\R$
  associated with any such word is given by
  \begin{equation*}
    \chi\bigl(a_1a_2\cdots a_n\bigr)\coloneqq\begin{pmatrix} a_1+\cdots+a_n\\a_n\end{pmatrix}
    \begin{pmatrix} a_{1}+a_{2}+\cdots+a_n\\a_{2}\end{pmatrix}
    \cdots\begin{pmatrix} a_{n-1}+a_n\\a_{n-1}\end{pmatrix}\begin{pmatrix} a_n\\a_n\end{pmatrix},
  \end{equation*}
  where each of the factors shown on the right is a Leibniz coefficient. For example,
  the penultimate factor is $a_{n-1}+a_n$ choose $a_{n-1}$.  
\end{definition}
The reason for calling this coefficient the `signature character' will become apparent presently.
Let $\mathcal C(n)$ denote the set of all compositions of $n\in\mathbb N$.
The following result is equivalent to that in Malham~\cite[Lemma~2]{Malham:KdVhierarchy}.
\begin{lemma}[Inverse operator expansions]\label{lemma:IOE}
  Suppose the operator $\hat Q$ depends on a parameter with respect to which we
  wish to compute derivatives---assume these derivatives exist to any order we require. 
  Further suppose $U\coloneqq(\id-\hat Q)^{-1}$ exists. Then we observe:
\begin{equation*}
 U\equiv\id+U\hat Q\equiv\id+\hat QU.
\end{equation*}
For any non-negative integer $k$, set $\hat Q_k\coloneqq\pa^k Q$ and $U_k\coloneqq\pa^kU$.
Then we observe that $U_1=U\hat Q_1U$ and more generally
\begin{equation*}
U_n=\sum\chi(a_1a_2\cdots a_k)\,U\hat Q_{a_1}U\hat Q_{a_2}U\cdots U\hat Q_{a_k}U,
\end{equation*}
where the sum is over all compositions $a_1a_2\cdots a_k\in\mathcal C(n)$.
The real-valued coefficients $\chi(a_1a_2\cdots a_k)$ are the signature characters defined above. 
\end{lemma}
\begin{proof}
  The first result in the lemma is immmediate, as is the result for $U_1$, which also
  establishes that the more general case for $U_n$ holds for $n=1$. Assume that the
  general result holds for $U_k$ for all $2\leqslant k\leqslant n$.
  Suppose we now compute the $(n+1)$th derivative of the identity $U=\id+PU$. 
  After using that term $PU_{n+1}$ in the Leibniz expansion can be combined 
  with the original term $U_{n+1}$ on the left, we arrive at the expression:
  \begin{equation*}
    U_{n+1}=\begin{pmatrix} n+1\\n+1\end{pmatrix} UP_{n+1}U+\begin{pmatrix} n+1\\n\end{pmatrix} UP_{n}U_1
    +\cdots+\begin{pmatrix} n+1\\1\end{pmatrix} UP_{1}U_n.    
  \end{equation*}
  The first term on the right is the only composition of $n+1$ starting with the digit $n+1$
  and matches the single digit composition of $n+1$ with the correct coefficient.
  Now consider the second term on the right. Substituting for $U_1$ this term becomes $UP_{n}UP_1U$,
  which is the only composition of $n+1$ starting with the digit $n$. The coefficient is
  $n+1$ choose $n$ which matches $\chi(n1)$. Consider the $k$th term on the right
  of the form $UP_{n+1-k}U_k$ with the corresponding Leibniz coefficient corresponding to
  $n+1$ choose $n+1-k$, with $k\leqslant n$. If we substitute for $U_k$ as the linear
  combination of all the compositions $w$ of $k$ with coefficients $\chi(w)$, then
  we exhaust all the possible compositions of $n+1$ that start with the digit $n+1-k$.
  The corresponding coefficient of the term $(n+1-k)w$ is
  \begin{equation*}
    \begin{pmatrix} n+1\\n+1-\ell\end{pmatrix} \chi(w)=\chi\bigl((n+1-\ell)w\bigr).
  \end{equation*}
  This follows from the definition of $\chi(w)$. This completes the proof. \qed
\end{proof}

Let us now focus on the \emph{P\"oppe algebra} associated with the noncommutative potential Korteweg--de Vries equation.
The results here for this particular P\"oppe algebra can largely be found in Malham~\cite{Malham:KdVhierarchy}.
In the prescription for the noncommutative potential Korteweg--de Vries equation, see Definition~\ref{prescription:KdVNLS},
we set $\hat Q=P$. Hence in the statements just above, we replace $\hat Q$ by $P$.
Further, in Definition~\ref{prescription:KdVNLS}, the linear Fredholm equation has the form $P=G(\id-\hat Q)$
or equivalently $G=PU$. In particular since $U=\id+UP=\id+PU$ we observe, using the notation $G_n\coloneqq\pa^n G$
for any $n\in\mathbb N$, that $G_n\equiv U_n$. In particular, after simply specialising the final result
of Lemma~\ref{lemma:IOE} to $\hat Q=P$ and then applying the kernel bracket operator, we have
\begin{equation*}
[U_n]=\sum\chi(a_1a_2\cdots a_k)\,[UP_{a_1}UP_{a_2}U\cdots UP_{a_k}U],
\end{equation*}
with the sum is over all compositions $a_1a_2\cdots a_k\in\mathcal C(n)$ as above.
This means that any such terms $[U_n]$ for $n\in\mathbb N$ can be expanded in terms
of a basis of monomials of the form $[P_{a_1}UP_{a_2}U\cdots UP_{a_k}U]$.
In other words, we can construct a vector space of such monomials. Further still,
the vector space is closed under the P\"oppe product in the form just below, and thus
we can construct an algebra of such monomials. The following result is a direct
conequence of the P\"oppe product rule and can be found in Malham~\cite[Lemma~3]{Malham:KdVhierarchy}.
Let $\Cb\coloneqq\cup_{n\geqslant0}\Cb(n)$ denote the set of all compositions.
\begin{lemma}[P\"oppe product for kernel monomials]\label{lemma:Poppeprodkernels}
Suppose $u=a_1\cdots a_k\in\Cb$ and $v=b_1\cdots b_\ell\in\Cb$ 
while $a,b\in\mathbb N$. Let $\mm_u$ and $\mm_v$ denote the kernel monomials
$\mm_u\coloneqq UP_{a_1}UP_{a_2}U\cdots UP_{a_k}U$ and
$\mm_v\coloneqq UP_{b_1}UP_{b_2}U\cdots UP_{b_\ell}U$.
Then we have 
\begin{equation*}
[\mm_uP_aU][UP_b\mm_v]
=[\mm_uP_{a+1}UP_b\mm_v]+[\mm_uP_aUP_{b+1}\mm_v]+2\,[\mm_uP_aUP_1UP_b\mm_v].
\end{equation*}
\end{lemma}
\begin{proof}
Using that $U\equiv\id+UP\equiv\id+PU$,
the P\"oppe product rule from Lemma~\ref{lemma:productrule}
and that $\pa U=U(\pa P)U$, we see that 
\begin{align*}
[\mm_uP_aU][UP_b\mm_v]&=[\mm_uP_a+\mm_uP_aUP][PUP_b\mm_v+P_b\mm_v]\\
&=\;[\mm_uP_a][P_b\mm_v]+[\mm_uP_a][PUP_b\mm_v]\\
&\;+[\mm_uP_aUP][P_b\mm_v]+[\mm_uP_aUP][PUP_b\mm_v]\\
=&[\mm_u\pa(P_aP_b)\mm_v]+[\mm_u\pa(P_aP)UP_b\mm_v]\\
&\;+[\mm_uP_aU\pa(PP_b)\mm_v]+[\mm_uP_aU\pa(PP)UP_b\mm_v]\\
=&[\mm_u(\pa P_a)P_b\mm_v]+[\mm_uP_a(\pa P_b)\mm_v]\\
&\;+[\mm_u(\pa P_a)PUP_b\mm_v]+[\mm_uP_a(\pa P)UP_b\mm_v]\\
&\;+[\mm_uP_aU(\pa P)P_b\mm_v]+[\mm_uP_aUP(\pa P_b)\mm_v]\\
&\;+[\mm_uP_aU(\pa P)PUP_b\mm_v]+[\mm_uP_aUP(\pa P)UP_b\mm_v]\\
=&[\mm_u(\pa P_a)UP_b\mm_v]+[\mm_uP_aU(\pa P_b)\mm_v]\\
&\;+2\,[\mm_uP_aU(\pa P)UP_b\mm_v],
\end{align*}
establishing the result.\qed
\end{proof}
\begin{remark}
Recall from Lemma~\ref{lemma:productrule}, we implicitly interpret 
kernel products of the form $[\,\cdot\,][\,\cdot\,]\cdots[\,\cdot\,][\,\cdot\,]$
as $[\,\cdot\,](y,0;x)[\,\cdot\,](0,0;x)\cdots[\,\cdot\,](0,0;x)[\,\cdot\,](0,z;x)$.
\end{remark}
At this juncture some clarity can be gained via abstraction. 
We replace the monomials $[UP_{a_1}UP_{a_2}U\cdots UP_{a_k}U]$ simply by the words $a_1a_2\cdots a_k\in\mathcal C$
and define the following product. Let $\Rb\la\Cb\ra_\ast$ denote the non-commutative 
polynomial algebra over $\Rb$ generated by composition elements from $\Cb$, 
endowed with the following \emph{P\"oppe product} for compositions. We call the resulting algebra
$\Rb\la\Cb\ra_\ast$, the \emph{P\"oppe algebra}.
\begin{definition}[P\"oppe product for compositions]\label{def:Poppeproductcompositions}
  Consider two compositions $ua$ and $bv$ in $\Cb$, where we distinguish the last letter of $ua$ as $a$
  and the first letter of $bv$ as $b$. We define the \emph{P\"oppe product}
  $\ast\colon\Rb\la\Cb\ra_\ast\times\Rb\la\Cb\ra_\ast\to\Rb\la\Cb\ra_\ast$
  for the compositions $ua$ and $bv$ to be, 
  \begin{equation*}
    (ua)\ast(bv)=u(a+1)bv+ua(b+1)v+2\cdot(ua1bv).
  \end{equation*}
  The empty composition $\nu\in\Rb\la\Cb\ra_\ast$ satisfies $\nu\ast w=w\ast\nu=w$ for any $w\in\Rb\la\Cb\ra_\ast$.
\end{definition}
We observe, the real algebra of kernel monomials of the form $[UP_{a_1}UP_{a_2}U\cdots UP_{a_k}U]$
with the product given in Lemma~\ref{lemma:Poppeprodkernels}, is isomorphic to $\Rb\la\Cb\ra_\ast$.
The abstract version of the expansions above for $[U_n]$ are the following signature expansions
in $\Rb\la\Cb\ra_\ast$.
\begin{definition}[Signature expansions]\label{def:polycomp}
For any $n\in\Nb$ we define the following linear \emph{signature expansions} $\bs{n}\in\Rb\la\Cb\ra_\ast$:
\begin{equation*}
\boldsymbol{n}\coloneqq\sum_{w\in\Cb(n)}\chi(w)\cdot w.
\end{equation*}
\end{definition}
Now recall from Definition~\ref{prescription:KdVNLS} in the case when $n\geqslant3$ is odd,
we assume $P=P(x,t)$ satisfies $\pa_t P=\mu\pa_x^nP$ for some constant $\mu\in\R$.
Suppose $\hat Q=\hat Q(x,t)$ and $G=G(x,t)$ satisfy the other two linear equations in the prescription,
namely $\hat Q=P$ and $P=G(\id-\hat Q)$. Suppose $U\coloneqq(\id-P)^{-1}$ and thus $G=PU$.
Further, for convenience, we set $P_n\coloneqq\pa^nP$ and $U_n\coloneqq\pa^nU$ for non-negative integers $n$.
Recall the first two identities in Lemma~\ref{lemma:usefulNLSidentities} which here
translate to $U\equiv\id+UP\equiv\id+PU$. Hence for all non-negative integers $n$ we
observe $G_n\equiv U_n$. Hence, using that $\pa_t P=\mu\pa^nP$, and the
identities from Lemma~\ref{lemma:usefulNLSidentities}, we observe that,
\begin{equation*}
  \pa_tG=U(\pa_tP)U=\mu\,UP_nU. 
\end{equation*}
Consider the right-hand side, modulo the $\mu$ factor, expressed in terms of the
abstract P\"oppe algebra, it is simply the basic letter, $n\in\Rb\la\Cb\ra_\ast$.
A natural question now is to ask ourselves if we can write $n\in\Rb\la\Cb\ra_\ast$
in terms of a polynomial expansion $\pi_n=\pi_n(\bs 1,\bs 2,\ldots,\bs{(n-2)},\bs{n})$, a P\"oppe polynomial,
of the form,  
\begin{equation*}
\pi_n\coloneqq \sum_{k=1}^{\frac12(n+1)}\sum_{a_1a_2\cdots a_{k}\in\mathcal C^\ast(n)} c_{a_1a_2\cdots a_{k}}\cdot\bs a_1\ast\bs a_2\ast\cdots\ast\bs a_{k},
\end{equation*}
where $\mathcal C^\ast(n)\subset\mathcal C(n)$ represents the subset of compositions $w=a_1a_2\cdots a_k$ of $n$
such that $a_1+a_2+\cdots+a_k=n-k+1$ and the coefficients $c_{a_1a_2\cdots a_{k}}$ are real.

We now establish integrability of the noncommutative potential Korteweg--de Vries equation 
in the context of the P\"oppe algebra via the following example with $n=3$. 
\begin{example}[Korteweg--de Vries integrability]\label{ex:illustrativeKdVex}
Assume $n=3$. Using Definition~\ref{def:signaturecharacter} for $\chi$, the first three signature
expansions are $\bs{1}=\chi(1)\cdot 1$, $\bs{2}=\chi(2)\cdot 2+\chi(11)\cdot 11$ and
\begin{align*}
&&\bs 3&=\chi(3)\cdot 3+\chi(21)\cdot 21+\chi(12)\cdot12+\chi(111)\cdot111\\
\Leftrightarrow&&
3&=\bs 3-3\cdot(21+12)-6\cdot111.
\end{align*}
Now consider the P\"oppe product, which by bilinearity can be expressed in the form
$\bs 1\ast\bs 1=\bigl(\chi(1)\cdot 1\bigr)\ast\bigl(\chi(1)\cdot 1\bigr)=\chi(1)\chi(1)\cdot(1\ast1)$.
Explicitly computing $1\ast 1$ from Definition~\ref{def:Poppeproductcompositions}
and using that $\chi(1)=1$, we observe
\begin{equation*}
\bs 1\ast\bs 1=\chi(1)\chi(1)\cdot\bigl(21+12+2\cdot(111)\bigr)=21+12+2\cdot 111.
\end{equation*}
Comparing this result with the expansion for the basic letter $3\in\Rb\la\Cb\ra_\ast$ above,
we see, 
\begin{equation*}
3=\bs 3-3\cdot\bigl(\bs 1\ast\bs 1\bigr).
\end{equation*}
Thus $3\in\Rb\la\Cb\ra_\ast$ has a polynomial expansion in terms of signature expansions as shown.
In terms of the kernel monomials this expresses the fact that $[UP_3U]=[U_3]-3[U_1][U_1]$.
This statement establishes that the nonocommutative potential Korteweg--de Vries equation is integrable.
\end{example}
\begin{remark}[Noncommutative potential Korteweg de Vries hierarchy]
  The result of Example~\ref{ex:illustrativeKdVex} can be reformulated into a systematic
  procedure. For any odd order $n\geqslant3$, the goal is to determine real coefficients
  in the P\"oppe polynomial $c_{a_1\cdots a_k}$ such that $n=\pi_n$. By expanding $\pi_n$,
  and equating coefficients of all the independent words generated, 
  this leads to an overdetermined linear algebraic system of equations for the coefficients.
  It is possible to then show that at each order $n$, there is a unique solution.
  See Malham~\cite{Malham:KdVhierarchy} for more details.
\end{remark}
The result of the Existence and Uniqueness Lemma~\ref{lemma:KdVprescription} implies we can generate
solutions to the noncommutative potential Korteweg--de Vries equation as follows.
Suppose we are given the arbitrary smooth data function $p_0=p_0(x)$ and are 
able to generate a corresponding solution $p=p(x;t)$ to the linear
equation $\pa_tp=\mu\pa_x^3p$ analytically, either in Fourier space or 
as a convolution integral with the Airy function kernel.
In other words we analytically evaluate $p=p(x;t)$ at the prescribed time $t>0$. 
Then \emph{in practice}, for any given time $t\in[0,T]$, 
we can generate a corresponding solution $g=g(0,0;x,t)$ to the noncommutative potential Korteweg--de Vries equation
by solving the linear Fredholm equation in Lemma~\ref{lemma:KdVprescription} for $g(y,z;x,t)$ as follows.
Since our goal is to generate solutions $g(0,0;x,t)$ only, rather than more generally
to generate solutions $g=g(y,z;x,t)$ to the family of nonlocal equations, we can set $y=0$
from the outset, so the linear Fredholm equation becomes
\begin{equation*}
p(z+x;t)=g(0,z;x,t)-\int_{-\infty}^0g(0,\xi;x,t)p(\xi+z+x;t)\,\rd\xi.
\end{equation*}
We solve this Fredholm equation for $g=g(0,z;x,t)$ and then set $z=0$,
to generate a solution $g=g(0,0;x,t)$ corresponding to the scattering data $p=p(x;t)$.
The following example demonstrates this procedure for a solitary wave solution.
\begin{example}[Solitary wave]\label{ex:solitarywave}
Suppose $A$ is a constant square matrix, whose eigenvalues are distinct and all are strictly positive. 
Consider the scattering data,
\begin{equation*}
p(x;t)=-2A\exp(Ax+A^3t).
\end{equation*}
In the standard fashion, see for example Drazin and Johnson~\cite[p.~73--4]{DJ},
we look for a solution $g=g(0,z;x,t)$ to the linear Fredholm equation above of the separated form
\begin{equation*}
g(0,z;x,t)=L(x,t)\exp(Az),
\end{equation*}
for some unknown square-matrix valued function $L=L(x,t)$ to be determined.
Substituting the forms above for $p=p(x;t)$ and $g=g(0,z;x,t)$ into the
linear Fredholm equation and postmultiplying through by $\exp(-Az)$, reveals that $L=L(x,t)$ satisfies,
\begin{equation*}
  -2A\exp(Ax+A^3t)=L(x,t)+2L(x,t)A\int_{-\infty}^0\exp(2A\xi)\,\rd\xi\,\exp(Ax+A^3t).
\end{equation*}
Evaluating the integral on the right-hand side, solving the resulting linear algebraic problem for $L=L(x,t)$
and noting that $g(0,0;x,t)=L(x,t)$, we find that
\begin{equation*}
g(0,0;x,t)=-2A\exp(Ax+A^3t)\bigl(I+\exp(Ax+A^3t)\bigr)^{-1}.
\end{equation*}
Recall $g=g(0,0;x,t)$ is the solution to the potential Korteweg--de Vries equation 
and the solution to the actual Korteweg--de Vries equation is given by
\begin{equation*}
\pa_xg(0,0;x,t)=-\tfrac12 A^2\mathrm{sech}^2\bigl(\tfrac12 A(x+A^2t)\bigr). 
\end{equation*}
In the special case $A>0$ is scalar this corresponds to the one-soliton solution;
see Remarks~\ref{rmk:GLM} and \ref{rmk:KdVsolutions} just below.
\end{example}
\begin{remark}[Gel'fand--Levitin--Marchenko equation]\label{rmk:GLM}
The celebrated \emph{Gel'fand--Levitin--Marchenko} equation usually takes the form, 
\begin{equation*}
  F(Z+X)+K(Z,X)+\int_X^{\infty}K(Y,X)F(Y+Z)\,\rd Y=0,
\end{equation*}
for the function $K=K(Z,X)$ given the scattering data $F=F(Z)$.
A simple transformation of variables reveals the relation between the
Gel'fand--Levitin--Marchenko equation and the linear Fredholm equation for the
kernel $\hat g(z;x)\coloneqq g(0,z;x)$ given by,
\begin{equation*}
p(z+x)=\hat g(z;x)-\int_{-\infty}^0\hat g(\eta;x)p(\eta+z+x)\,\rd\eta,
\end{equation*}
where $p=p(\,\cdot\,)$ corresponds to the scattering data. We suppress the
implicit time-dependence. Making the change of variables $z\coloneqq -Z-\frac12x$, $\eta\coloneqq-Y-\frac12x$
and then $x\coloneqq-2X$ in the linear Fredholm equation, we arrive at the relation,
\begin{equation*}
p(-Z-X)=\hat g(-Z+X;-2X)-\int_X^{\infty}\hat g(-Y+X;-2X)p(-Y-Z)\,\rd Y.
\end{equation*}
We now observe that if we identify $K(Z,X)\coloneqq\hat g(z;x)\equiv\hat g(-Z+X;-2X)$
and $F(Z+X)\coloneqq -p(x+z)\equiv-p(-Z-X)$,
then $K(Z,X)$ satisfies the Gel'fand--Levitin--Marchenko equation above.
Note that setting $z=0$ corresponds to setting $Z=X$.
\end{remark}

\begin{remark}[Relation to the classical Korteweg--de Vries solution]\label{rmk:KdVsolutions}
  Classically, the solution $U=U(X,t)$ to the scalar Korteweg--de Vries equation is given
  in terms of the solution $K=K(Z,X;t)$ to the Gel'fand--Levitin--Marchenko equation by
  \begin{equation*}
  U(X,t)=-2\pa_XK(X,X;t).
  \end{equation*}
  Since from Remark~\ref{rmk:GLM} we have $X=-\tfrac12x$ so $\pa_X=-2\pa_x$ and
  $K(Z,X;t)=\hat g(z;x,t)=g(0,z;x,t)$ we observe that (using that $z=0$ corresponds to $Z=X$)
  \begin{equation*}
  U(X,t)=4\pa_xg(0,0;x,t).
  \end{equation*}
  Hence, for example, consider the solitary wave in Example~\ref{ex:solitarywave} 
  when $A>0$ is a scalar constant. This case corresponds to the one-soliton solution.
  At time $t=0$, in the $x$-frame the solution corresponding to the scattering data
  $p(x;0)=-2A\mathrm{e}^{Ax}$ is the `$\mathrm{sech}^2$' profile $\pa_x g(0,0;x,0)=-\frac12A^2\mathrm{sech}^2(\frac12Ax)$.
  However, in the $X$-frame this corresponds to,
  $U(X,0)=4\,\bigl(-\tfrac12A^2\mathrm{sech}^2(\tfrac12Ax)\bigr)=-2A^2\mathrm{sech}^2(AX)$,
  matching the expected solution from the data $F(Z+X)=-p(-Z-X)=2A\mathrm{e}^{-A(Z+X)}$; see Keener~\cite[p.~415]{Keener}.
\end{remark}
\begin{remark}[General solutions]\label{rmk:generalsolutions}
P\"oppe~\cite[Sections 3.3 \& 3.4]{P-KdV} outlines in detail further solutions to
the scalar Korteweg--de Vries equation that can be generated by his approach, i.e.\/ the
approach above. For example, therein he outlines in detail, how,   
by linear combinations of terms of the form $\mathrm{e}^{ax+a^3t}$ in $p=p(x;t)$
for $N$ different discrete values of $a$, we can generate the $N$-soliton
solution to the Korteweg--de Vries equation; with each solitary wave phase shifted 
according to the size of its coefficient factor. Further solutions can
be generated from scattering data $p=p(x,t)$ consisting of a continuous
linear combination of such exponential forms for which the corresponding operator is positive definite.
Further, breather solutions and degenerate solitons can be generated within the context of the P\"oppe approach.
However rational solutions cannot presently be generated in this way, as they do not decay as $x\to-\infty$.
See P\"oppe~\cite[Section~3.4]{P-KdV} for more details.
\end{remark}
Thusfar , our discussion has revolved around generating
solutions to the Korteweg--de Vries equation from given scattering data $p=p(x,t)$, which
satisfy the noncommutative linear Korteweg--de Vries equation. 
Such scattering data can be quite arbitrary, though we restrict ourselves to smooth scattering data.
The corresponding solution $g(0,0;x,t)$ to the noncommutative nonlinear
Korteweg--de Vries equation is generated by solving the linear Fredholm equation $P=G(\id-P)$ for $G=G(x,t)$.
However, if we are given initial data $g(0,0;x,0)$ then we need to generate the initial scattering
data $p_0=p_0(x)$ and thus $p=p(x;t)$. To generate $p_0$ from $g(0,0;x,0)$, we need to solve the 
classical ``scattering problem''. We address this in Appendices~\ref{sec:scatteringproblem} and \ref{sec:Evansfunction}.

\section{Nonlinear Schr\"odinger equations}\label{sec:NLS}
We give new proofs that the noncommutative nonlinear Schr\"odinger and modified
Korteweg--de Vries (mKdV) equations are realisable as Fredholm Grassmannian flows and
are thus linearisable in the sense that their solution can be determined via
the solution of the corresponding linearised dispersive equation
and a linear Fredholm equation. Indeed our approach extends to the nonlocal reverse time and
reverse space-time versions of these equations. To begin we first consider the following
coupled linear system of equations for the Hilbert--Schmidt operators
$P_\alpha$, $P_\beta$, $\hat Q_\alpha$, $\hat Q_{\beta}$, $G_\alpha$ and $G_\beta$:
\begin{equation*}
\begin{aligned}
\pa_tP_\alpha&=d_\alpha(\pa) P_\alpha,\\
\hat Q_\alpha&=P_\beta P_\alpha,\\
P_\alpha&=G_\alpha(\id-\hat Q_\alpha),
\end{aligned}
~\qquad\text{and}~\qquad
\begin{aligned}
\pa_tP_\beta&=d_\beta(\pa) P_\beta,\\
\hat Q_\beta&=P_\alpha P_\beta,\\
P_\beta&=G_\beta(\id-\hat Q_\beta).
\end{aligned}
\end{equation*}
where the partial differential operators $d_\alpha=d_\alpha(\pa)$ and $d_\beta=d_\beta(\pa)$
for some integer order $n\geqslant 2$ are given by $d_\alpha=\mu_n\pa^n$ and $d_\beta=(-1)^{n-1}d_\alpha$,
for some parameter $\mu_n\in\CC$ which ensures the linear partial differential equations above are
dispersive. This amounts to $\mu_n$ being pure imaginary for even $n$ and real for odd $n$.
We also suppose the kernels of $P_\alpha$ and $P_\beta$ are square matrix-valued. 

It is naturally convenient to combine the coupled system into one `enhanced' system, and we can achieve
this as follows. Note the two Fredholm equations are equivalent to the single, block equation,
\begin{equation*}
  \begin{pmatrix} O & P_\beta \\ P_\alpha & O \end{pmatrix}
  =\begin{pmatrix} O & G_\beta \\ G_\alpha & O \end{pmatrix}
  -\begin{pmatrix} O & G_\beta \\ G_\alpha & O \end{pmatrix}\begin{pmatrix} P_\beta P_\alpha & O \\ O & P_\alpha P_\beta\end{pmatrix}.
\end{equation*}
Indeed if we set,
\begin{equation*}
  P\coloneqq\begin{pmatrix} O & P_\beta \\ P_\alpha & O \end{pmatrix}\qquad\text{and}\qquad 
  G\coloneqq\begin{pmatrix} O & G_\beta \\ G_\alpha & O \end{pmatrix},
\end{equation*}
then the block equation above is equivalent to $P=G(\id-P^2)$, and further
the pair of linear dispersive partial differential equations for $P_\alpha$ and $P_\beta$
can be consolidated into the single equation $\pa_t P=-\mu_n(\mathrm{i}\mathcal I)^{n-1}\pa^nP$
with $\mu_n\in\R$, corresponding to the presciption given in Definition~\ref{prescription:KdVNLS}.
\begin{theorem}[Nonlinear Schr\"odinger and mKdV decompositions]\label{thm:NLS}
Assume the kernel $p=p(\,\cdot\,;t)$ generating the Hankel operator $P=P(t)$
satisfies either of the sets of assumptions in (I) or (II),
in the existence and uniqueness Lemma~\ref{lemma:KdVprescription}.
Then assume the Hankel Hilbert--Schmidt operator $P=P(x,t)$ generated by $p=p(\,\cdot\,+x;t)$
for any $x\in\R$, the (necessarily) trace class operator $\hat Q=P^2$ and 
the (necessarily) Hilbert--Schmidt operator $G=G(x,t)$ 
satisfy the linear system of equations given in Definition~\ref{prescription:KdVNLS}, for $t\in[0,T]$ for some $T>0$.
Then on $[0,T]$, the kernel $g=g(y,z;x,t)$ corresponding to $G=G(x,t)$ 
satisfies the non-local noncommutative:

(A) Nonlinear Schr\"odinger equation when $n=2$, given by,
\begin{equation*}
\pa_tg(y,z;x,t)=-\mu_2\mathrm{i}\mathcal I\bigl(\pa_x^2g(y,z;x,t)-2g(y,0;x,t)g(0,0;x,t)g(0,z;x,t)\bigr), 
\end{equation*}

(B) Modified Korteweg--de Vries equation when $n=3$, given by,
\begin{align*}
  \pa_tg(y,z;x,t)=&\;\mu_3\Bigl(\pa_x^3g(y,z;x,t)-3\bigl(\pa_x g(y,0;x,t)g(0,0;x,t)g(0,z;x,t)\\
  &\;\qquad\qquad\qquad\qquad\qquad+g(0,z;x,t)g(0,0;x,t)\pa_x g(0,z;x,t)\bigr)\Bigr).
\end{align*}
In our discussion just below and Examples~\ref{ex:NLS} and \ref{ex:nonlocalNLS}, we explicitly show how
for $g=g(0,0;x,t)$, the equations above lead to both the usual (local) noncommutative nonlinear Schr\"odinger
and modified Korteweg--de Vries equations, as well as versions where the the cubic nonlinearities involve
nonlocal reverse space-time central factors.
\end{theorem}
We give the proof of Theorem~\ref{thm:NLS} below, in the context of the
\emph{P\"oppe Triple System}. Before doing so,
we examine some of the systems implicit in the equation 
for $g=g(y,z;x,t)$ in Theorem~\ref{thm:NLS}. The off-diagonal block form
for $G$ implies that $g=[G]$ has an off-diagonal block form. Suppose the
upper right and lower left blocks of $g$ are $g_\beta=g_\beta(y,z;x,t)$
and $g_\alpha=g_\alpha(y,z;x,t)$, respectively.
Then the equation for $g=g(0,0;x,t)$ in (A) in Theorem~\ref{thm:NLS} implies
$g_\alpha=g_\alpha(0,0;x,t)$ and $g_\beta=g_\beta(0,0;x,t)$ satisfy,
\begin{align*}
&&\mathrm{i}\pa_t\begin{pmatrix} O & g_\beta \\ g_\alpha & O \end{pmatrix}
&=\mu_2\begin{pmatrix} -\id & O \\ O & \id \end{pmatrix}\pa^2\begin{pmatrix} O & g_\beta \\ g_\alpha & O \end{pmatrix}
-2\mu_2\begin{pmatrix} -\id & O \\ O & \id \end{pmatrix}\begin{pmatrix} O & g_\beta \\ g_\alpha & O \end{pmatrix}^3\\  
\Leftrightarrow&&
\mathrm{i}\begin{pmatrix} O & \pa_tg_\beta \\ \pa_tg_\alpha & O \end{pmatrix}
&=\mu_2\begin{pmatrix} O & -\pa^2g_\beta \\ \pa^2g_\alpha & O \end{pmatrix}
-2\mu_2\begin{pmatrix} O & -g_\beta g_\alpha g_\beta \\ g_\alpha g_\beta g_\alpha & O \end{pmatrix}\\
\Leftrightarrow&&
\mathrm{i}\pa_tg_\alpha&=\mu_2\pa^2 g_\alpha-2\mu_2 g_\alpha g_\beta g_\alpha,\\
&&\mathrm{i}\pa_tg_\beta&=-\mu_2\pa^2g_\beta+2\mu_2 g_\beta g_\alpha g_\beta.
\end{align*}
Note, the linear partial differential equations
$\mathrm{i}\pa_tp_\alpha=\mu_2\pa^2p_\alpha$ and $\mathrm{i}\pa_tp_\beta=-\mu_2\pa^2p_\beta$ are both dispersive.
There are several different consistent choices we can make for
$P_\alpha$ and $P_\beta$ that generate different noncommutative nonlinear 
Schr\"odinger systems as indicated in the following examples.
\begin{example}[Non-commutative nonlinear Schr\"odinger equation]\label{ex:NLS} 
One consistent choice is to set $P_\beta=P_\alpha^\dag$, the adjoint operator
to $P_\alpha$ with respect to the $L^2$ inner product. For this choice
$G_\beta=G_\alpha^\dag$, the corresponding adjoint operator to $G_\alpha$. 
Then we observe $g_\beta=g_\beta(0,0;x,t)$ is given by $g_\beta=g_\alpha^\dag$, the
complex conjugate transpose of $g_\alpha$. In that case $g_\alpha=g_\alpha(0,0;x,t)$
satisfies the non-commutative nonlinear Schr\"odinger equation:
\begin{equation*}
\mathrm{i}\pa_tg_\alpha=\mu_2\pa_x^2g_\alpha-2\mu_2 g_\alpha g_\alpha^\dag g_\alpha. 
\end{equation*}
Note with this choice for $P_\beta$ the block form operator $P$ is Hermitian
with respect to the $L^2$ inner product, i.e.\/ $P^\dag=P$. Since $U=(\id-P^2)^{-1}$
is a power series in $P^2$ we observe that $U^\dag=U$. Hence for example
the adjoint of the composition $G=PU$ is given by $G^\dag=(PU)^\dag=U^\dag P^\dag=UP=G$.
We remark on two conclusions from this relation. The first is that the block form operator $G$
is also Hermitian, and the second is that the operator adjoint of $PU$ is $UP$.
Indeed we further conclude that the operator adjoint to $P_{a_1}UP_{a_2}P_{a_3}U\cdots UP_{a_{k-1}}P_{a_k}U$
is $UP_{a_{k}}P_{a_{k-1}}U\cdots UP_{a_3}P_{a_2}UP_{a_1}$. Hence in our
encoding of such operator monomials (see below) we observe that for any directed word,
$\bigl(a_1(a_2,a_3)\cdots(a_{k-1},a_k)\bigr)^\dag=(a_k,a_{k-1})\cdots(a_3,a_2)a_1$, and vice-versa.
\end{example}
\begin{example}[Reverse space-time nonlocal noncommutative nonlinear Schr\"odinger equation]\label{ex:nonlocalNLS} 
Another consistent choice is to set $P_\beta(x,t)=P_\alpha^{\mathrm{T}}(-x,-t)$, where $P_\alpha^{\mathrm{T}}$
is the Hilbert--Schmidt operator whose operator kenrel is the transpose of the matrix kernel corresponding
to $P_\alpha$. In other words we suppose $p_\beta(x,t)=p_\alpha^{\mathrm{T}}(-x,-t)$;
see Doikou \textit{et al.\/} \cite[Cor.~3.6]{DMS} and Malham~\cite[Cor.~3.2]{Malham:quinticNLS}.
Note, this choice is consistent with the linear dispersive partial differential equations 
$\pa_tp_\alpha=\mu\pa^2p_\alpha$ and $\pa_tp_\beta=-\mu\pa^2p_\beta$.
Further we observe that,  
$G_\alpha^{\mathrm{T}}(-x,-t)=\bigl(\id-P_\alpha^\mathrm{T}(-x,-t)P_\alpha(x,t)\bigr)^{-1}P_\alpha^\mathrm{T}(-x,-t)$,
and also that, 
$G_\beta(x,t)=\bigl(\id-P_\alpha(x,t)P_\alpha^\mathrm{T}(-x,-t)\bigr)^{-1}P_\alpha^\mathrm{T}(-x,-t)$.
Consequently we deduce $G_\beta(x,t)=G_\alpha^{\mathrm{T}}(-x,-t)$ and $g_\beta(0,0;x,t)=g_\alpha^{\mathrm{T}}(0,0;-x,-t)$.
Hence $g_\alpha=g_\alpha(0,0;x,t)$ satisfies the following version of the noncommutative nonlinear
Schr\"odinger equation with a nonlinearity involving a nonlocal reverse space-time factor
$g_\beta=g_\beta(0,0;x,t)\equiv g_\alpha^{\mathrm{T}}(0,0;-x,-t)$ as follows:
\begin{equation*}
\mathrm{i}\pa_tg_\alpha=\mu_2\pa_x^2g_\alpha-2\mu_2 g_\alpha g_\beta g_\alpha. 
\end{equation*}
We note that the operator $P_\beta(x,t)=P_\alpha^{\mathrm{T}}(-x,-t)$ is the
adjoint operator to $P_\alpha(x,t)$ with respect to the following space-time $\CC$-valued bilinear form
on the underlying Hilbert space:
\begin{equation*}
\int_{-T}^T\int_{-\infty}^\infty\psi^{\mathrm{T}}(-x,-t)\phi(x,t)\,\rd x\rd t.
\end{equation*}
Here $T>0$ is a fixed time horizon and note that we consider functions on the whole real line.
In particular if $p_\alpha=p_\alpha(y+z+x;t,s)$ is the space-time kernel corresponding to $P_\alpha(x,t)$
that is Hankel with respect to the spatial variables,
then $p_\beta(y+z+x;t,s)=p_\alpha^{\mathrm{T}}(-z-y-x;-s,-t)$ is the kernel corresponding to $P_\beta(x,t)$,
With this choice of $P_\beta$, the block form operator $P$ is self-adjoint with respect to
the corresponding block-extended space-time $\CC$-valued bilinear form, i.e.\/ $P^\dag=P$ 
where $P^\dag$ denotes the adjoint operator to $P$ with respect to the bilinear form.
As in the last example, we observe that $U^\dag=U$ while $G^\dag=(PU)^\dag=U^\dag P^\dag=UP=G$.
Hence $G^\dag=G$ and
$\bigl(P_{a_1}UP_{a_2}P_{a_3}U\cdots UP_{a_{k-1}}P_{a_k}U\bigr)^\dag=UP_{a_{k}}P_{a_{k-1}}U\cdots UP_{a_3}P_{a_2}UP_{a_1}$.
Hence in our encoding of such operator monomials we observe that for any directed word,
we have $\bigl(a_1(a_2,a_3)\cdots(a_{k-1},a_k)\bigr)^\dag=(a_k,a_{k-1})\cdots(a_3,a_2)a_1$, and vice-versa.
\end{example}
\begin{remark}
Similar arguments lead to the obvious corresponding cases for the noncommutative modified
Korteweg--de Vries equation, for which respectively, $g_\beta=g_\alpha^{\mathrm{T}}$
and in the reverse space-time case, $g_\beta(0,0;x,t)=g_\alpha^{\mathrm{T}}(0,0;-x,-t)$.
We can also generalise to complex valued noncommutative modified Korteweg--de Vries fields.
In the nonlinear Schr\"odinger case, we can also consider the case a nonlocal reverse time (only)
central factor in the cubic nonlinearity. See Doikou \textit{et al.\/} \cite{DMS} for more details.
Further we can also include the case of space-time shifted nonlocal linearities that were considered
by Ablowitz and Musslimani~\cite{AMshift}; see Malham~\cite{Malham:quinticNLS}.
\end{remark}  
In the following lemma we record some identities that will be useful below.
Note we set $U\coloneqq(\id-\hat Q)^{-1}$. Recall $P$ is a block operator with
non-zero blocks in the off-diagonal and $\hat Q\coloneqq P^2$ is thus a
block operator with non-zero blocks on the diagonal. Hence, using the
power series expansion for $(\id-\hat Q)^{-1}$, we observe that $U$
must have a diagonal block form with zero blocks on the off-diagonal.
\begin{lemma}\label{lemma:usefulNLSidentities}
The block operators $P$ and $U\coloneqq(\id-\hat Q)^{-1}$ satisfy the following identities:
\begin{equation*}
PU\equiv UP,\qquad \mathcal IU\equiv U\mathcal I\qquad\text{and}\qquad P\mathcal I\equiv-\mathcal IP.
\end{equation*}
\end{lemma}
\begin{proof}
  Since  $U\coloneqq(\id-P^2)^{-1}$, it has a power series expansion in $P^2$.
  Hence left composition of this form with $P$ equals right composition with $P$,
  thus establishing that $PU\equiv UP$. Since $P^2$ is block diagonal, and thus $U$
  is as well, pre- or post-composition with the block diagonal operator $\mathcal I$
  is the same. Hence we know $\mathcal IU\equiv U\mathcal I$. Lastly, using that
  $P$ is block off-diagonal, noting the block form of $\mathcal I$, direct computation
  reveals the final identity.
\qed
\end{proof}
We now proceed with the proof of Theorem~\ref{thm:NLS}.
We focus on the \emph{P\"oppe triple system} associated with the noncommutative nonlinear Schr\"odinger
and modified Korteweg--de Vries equations. In the prescription in Definition~\ref{prescription:KdVNLS},
we set $\hat Q=P^2$ and the linear Fredholm equation is $P=G(\id-\hat Q)$
or equivalently $G=PU$. Recall we also have $U=\id+U\hat Q=\id+\hat QU$ and
since $PU\equiv UP$, we also know $U=\id+PUP$. Recall the inverse operator expansions for $U_n$
in terms of monomials of the form $UQ_{a_1}U\hat Q_{a_2}U\cdots U\hat Q_{a_k}U$ from Lemma~\ref{lemma:IOE}.
Since $G_n=(PU)_n=(UP)_n$, by applying the Leibniz rule and then the expansions for $U_n$ from
Lemma~\ref{lemma:IOE}, we observe that we have,
\begin{align*}
  [G_n]&=\sum_{\ell=0}^n\begin{pmatrix} n\\\ell\end{pmatrix}
  \sum_{a_1\cdots a_k\in\mathcal C(\ell)}\chi(a_1\cdots a_k)\,[P_{n-\ell}U\hat Q_{a_1}U\cdots U\hat Q_{a_k}U]\\
  &\equiv\sum_{\ell=0}^n\begin{pmatrix} n\\\ell\end{pmatrix}
  \sum_{a_1\cdots a_k\in\mathcal C(n-\ell)}\chi(a_1\cdots a_k)\,[U\hat Q_{a_1}U\cdots U\hat Q_{a_k}UP_{\ell}],  
\end{align*}
after applying the kernel bracket operator at the very last step. In the expansions above, naturally $P_0=P$ and the terms
corresponding to $\ell=0$ and $\ell=n$ in the respective first and second terms on the right, are respectively $P_nU$ and $UP_n$.
We can rationalise these expressions further as follows.
From Definition~\ref{def:signaturecharacter} for the signature character $\chi$,
we observe that for any composition $w\in\mathcal C(n)$, we have (also see the proof of Lemma~\ref{lemma:IOE}),
\begin{equation*}
\begin{pmatrix} n+1\\n+1-\ell\end{pmatrix} \chi(w)=\chi\bigl((n+1-\ell)w\bigr).
\end{equation*}
Then since $n$ choose $\ell$ is the same as $n$ choose $n-\ell$, we observe we can write
\begin{align*}
[G_n]&=\sum_{a_1\cdots a_k\in\mathcal C(n)}\chi(a_1\cdots a_k)\,\bigl([P_{a_1}UQ_{a_2}U\cdots UQ_{a_k}U]+[PUQ_{a_1}U\cdots UQ_{a_k}U]\bigr)\\
     &\equiv\sum_{a_1\cdots a_k\in\mathcal C(n)}\chi(a_1\cdots a_k)\,\bigl([UQ_{a_1}U\cdots UQ_{a_{k-1}}UP_{a_k}]+[UQ_{a_1}U\cdots UQ_{a_k}UP]\bigr).  
\end{align*}
We require the following modification of Lemma~\ref{lemma:Poppeprodkernels} for the
P\"oppe product for kernel monomials. In the present context we have two monomial
forms, namely those of the form $[P_{a_1}U\hat Q_{a_2}U\cdots U\hat Q_{a_k}U]$ and
those of the form $[U\hat Q_{a_1}U\cdots U\hat Q_{a_{k-1}}UP_{a_k}]$.
Recall from Examples~\ref{ex:NLS} and \ref{ex:nonlocalNLS}, we regard these
monomials as adjoints of each other. For any $a\in\mathbb N$,
each term of the form $\hat Q_a$ within these monomials has the expansion,
\begin{equation*}
\hat Q_a=(PP)_a=\sum_{\ell=0}^a\begin{pmatrix} a\\\ell\end{pmatrix}P_{a-\ell}P_\ell.
\end{equation*}
Hence we should consider the monomials $\bigl[P_{a_1}U(P_{a_2}P_{a_3})U\cdots U(P_{a_{k-1}}P_{a_k})U\bigr]$
as our basis, and those of the form $\bigl[U(P_{a_1}P_{a_2})U\cdots U(P_{a_{k-2}}P_{a_{k-1}})UP_{a_k}\bigr]$
as adjoint monomials. We observe that particular expansions such as $G_n$, for all $n\in\mathbb N\cup\{0\}$,
are self-adjoint. It will be convenient to retain the expansion forms  
$[P_{a_1}U\hat Q_{a_2}U\cdots U\hat Q_{a_k}U]$ and $[U\hat Q_{a_1}U\cdots U\hat Q_{a_{k-1}}UP_{a_k}]$
in terms of these latter basis monomials.
The modification of Lemma~\ref{lemma:Poppeprodkernels} is as follows.
\begin{lemma}[Modified product for kernel monomials]\label{lemma:PoppeNLSrule}
Suppose $u=a_1\cdots a_k\in\Cb$ and $v=b_1\cdots b_\ell\in\Cb$ 
while $a,b,c,d\in\mathbb N\cup\{0\}$. Let $\mm_u$ and $\mm_v$ denote the monomials
$\mm_u\coloneqq P_{a_1}U(P_{a_2}P_{a_3})U\cdots U(P_{a_{k-1}}P_{a_k})U$ and
$\mm_v\coloneqq U(P_{b_1}P_{b_2})U\cdots U(P_{b_{\ell-2}}P_{b_{\ell-1}})UP_\ell$.
Then we have 
\begin{align*}
[\mm_u(P_aP_b)U][U(P_cP_d)\mm_v]
=&\;[\mm_u(P_aP_{b+1})U(P_cP_d)\mm_v]+[\mm_u(P_aP_b)U(P_{c+1}P_d)\mm_v]\\
&\;+[\mm_u(P_aP_b)UQ_1U(P_cP_d)\mm_v].
\end{align*}
\end{lemma}
\begin{proof}
Using that $U\equiv\id+UP^2\equiv\id+P^2U$ and the P\"oppe product rule from Lemma~\ref{lemma:productrule}, we see that 
\begin{align*}
  [\mm_u&(P_aP_b)U][U(P_cP_d)\mm_v]\\
  =&[\mm_u(P_aP_b)+\mm_u(P_aP_b)U(PP)][(PP)U(P_cP_d)\mm_v+(P_cP_d)\mm_v]\\
=&[\mm_u(P_aP_b)][(P_cP_d)\mm_v]+[\mm_u(P_aP_b)][(PP)U(P_cP_d)\mm_v]\\    
&\;+[\mm_u(P_aP_b)U(PP)][(P_cP_d)\mm_v]+[\mm_u(P_aP_b)U(PP)][(PP)U(P_cP_d)\mm_v]\\
=&[\mm_u(P_aP_{b+1})(P_cP_d)\mm_v]+[\mm_u(P_aP_b)(P_{c+1}P_d)\mm_v]\\
&\;+[\mm_u(P_aP_{b+1})(PP)U(P_cP_d)\mm_v]+[\mm_u(P_aP_b)(P_1P)U(P_cP_d)\mm_v]\\    
&\;+[\mm_u(P_aP_b)U(PP_1)(P_cP_d)\mm_v]+[\mm_u(P_aP_b)U(PP)(P_{c+1}P_d)\mm_v]\\
&\;+[\mm_u(P_aP_b)U(PP_1)(PP)U(P_cP_d)\mm_v]+[\mm_u(P_aP_b)U(PP)(P_1P)U(P_cP_d)\mm_v]\\
=&\;[\mm_u(P_aP_{b+1})U(P_cP_d)\mm_v]+[\mm_u(P_aP_b)U(P_{c+1}P_d)\mm_v]\\&\;+[\mm_u(P_aP_b)U(P_1P)U(P_cP_d)\mm_v]+[\mm_u(P_aP_b)U(PP_1)U(P_cP_d)\mm_v],
\end{align*}
which, upon combining the last two terms, gives the result.\qed
\end{proof}
As we did for the Korteweg--de Vries equation, it is now useful to abstract the structure above.
We code the two forms of monomials $\bigl[P_{a_1}U(P_{a_2}P_{a_3})U\cdots U(P_{a_{k-1}}P_{a_k})U\bigr]$ and
$\bigl[U(P_{a_1}P_{a_2})U\cdots U(P_{a_{k-2}}P_{a_{k-1}})UP_{a_k}\bigr]$ as follows. For $w=a_1a_2\cdots a_k$ we write:
\begin{align*}
  \bigl[P_{a_1}U(P_{a_2}P_{a_3})U\cdots U(P_{a_{k-1}}P_{a_k})U\bigr]
  &\quad\to\quad w=a_1(a_2,a_3)\cdots(a_{k-1},a_k)\\
  \bigl[U(P_{a_k}P_{a_{k-1}})U\cdots U(P_{a_{3}}P_{a_{2}})UP_{a_1}\bigr]
  &\quad\to\quad w^\dag=(a_k,a_{k-1})\cdots(a_{3},a_{2})a_1.
\end{align*}
The single letters at the ends without brackets represent the single $P_{a_1}$ operators.
We call the forms $w$ and $w^\dag$ \emph{directed words}. Words of the form $w$ will be our basis, while
those of the form $w^\dag$ form the adjoint basis.
As we have seen, the operators $[G_n]$ can be expressed in terms of expansions
of the monomial forms $[P_{a_1}U\hat Q_{a_2}U\cdots U\hat Q_{a_k}U]$ and $[U\hat Q_{a_1}U\cdots U\hat Q_{a_{k-1}}UP_{a_k}]$, which
are themselves expansions once the Leibniz rule has been applied to each of the terms $Q_a=(PP)_a$. As mentioned above,
it is convenient to retain these expansion terms and we encode them as follows,
\begin{align*}
  \bigl[P_{a_1}UQ_{a_2}U\cdots UQ_{a_{k-1}}UQ_{a_k}U\bigr]  &\quad\to\quad w_\ast=a_1(a_2)\cdots(a_{k-1})(a_k)\\
  \bigl[UQ_{a_k}UQ_{a_{k-1}}U\cdots UQ_{a_{2}}UP_{a_1}\bigr] &\quad\to\quad w_\ast^\dag=(a_k)(a_{k-1})\cdots(a_{2})a_1.
\end{align*}
Here, for pure form expansions involving only $Q_a$ factors in the middle,
note the bracket the single letters `$(a)$' in the middle corresponding to the $Q_a$,
however we leave the letters correpsonding to the operators $P_{a_1}$ at the ends
without brackets to distinguish them. We reserve the notation $w_\ast$ and $w_\ast^\dag$
for the forms above involving only $Q_a$ terms in the middle.
Note, we can mix these two notations, so that for example, we encode:
\begin{equation*}
  \bigl[P_{a_1}UQ_{a_2}U(P_{a_3}P_{a_4})UQ_{a_5}U\cdots UQ_{a_k}U\bigr]
  \quad\to\quad w=a_1(a_2)(a_3,a_4)(a_5)\cdots(a_k).
\end{equation*}
Let $\mathbb N_0$ denote the set of non-negative integers $\mathbb N_0\coloneqq\mathbb N\cup\{0\}$.
Further, let $\mathbb N_0^\ast$ denote the free monoid of directed words
on $\mathbb N_0$ and let $\CC\mathbb N_0$ denote the vector space of directed words
on $\mathbb N_0$ over the field $\CC$. Note we can define a scalar
product on $\mathbb N_0^\ast$ which is unity when two words are the same and zero when they are different.
We must keep in mind however that some letters and expansions are selfadoint.
For example the single letter $0\in\CC\mathbb N_0$ corresponds to $PU$ and thus $UP$ at the same time,
so $0^\dag=0$ in this abstract context.The directed word expansions corresponding to $(PU)_a$ are
also selfadjoint. We outline these in more detail presently.
For this scalar product, the free monoid $\mathbb N_0^\ast$ forms an orthonormal basis
and the vector space $\CC\mathbb N_0$ is well defined; see Reutenauer~\cite[p.~17]{Reutenauer}.
On $\CC\mathbb N_0$, consider the following version of the \emph{P\"oppe product}, based
on the original P\"oppe product in Lemma~\ref{lemma:Poppeprodkernels}
and the consequential form in Lemma~\ref{lemma:PoppeNLSrule}.
\begin{definition}[P\"oppe product for directed words]\label{def:PoppeproductNLS}
  We define the \emph{P\"oppe product} `$\ast$' of the directed words
  $u^\dag=(a_1,a_2)\cdots(a_{k-2},a_{k-1})a_k$  and $v=b_1(b_2,b_3)\cdots(b_{\ell-1},b_\ell)$ 
  as follows:
  \begin{align*}
  u^\dag\ast v=&\;(a_1,a_2)\cdots(a_{k-2},a_{k-1})(a_k+1,b_1)(b_2,b_3)\cdots(b_{\ell-1},b_\ell)\\
  &\;+(a_1,a_2)\cdots(a_{k-2},a_{k-1})(a_k,b_1+1)(b_2,b_3)\cdots(b_{\ell-1},b_\ell).    
  \end{align*}
  Further, we define the \emph{P\"oppe product} `$\ast$' between
  $u=a_1(a_2,a_3)\cdots(a_{k-1},a_k)$ and $v^\dag=(b_1,b_2)\cdots(b_{\ell-2},b_{\ell-1})b_\ell$ as follows:
  \begin{align*}
  u\ast v^\dag=&\;a_1(a_2,a_3)\cdots(a_{k-1},a_k+1)(b_1,b_2)\cdots(b_{\ell-2},b_{\ell-1})b_\ell\\
  &\;+a_1(a_2,a_3)\cdots(a_{k-1},a_k)(b_1+1,b_2)\cdots(b_{\ell-2},b_{\ell-1})b_\ell\\
  &\;+a_1(a_2,a_3)\cdots(a_{k-1},a_k)(1)(b_1,b_2)\cdots(b_{\ell-2},b_{\ell-1})b_\ell.
  \end{align*}
  The empty word $\nu\in\CC\mathbb N_0$ satisfies $\nu\ast w=w\ast\nu=w$
  and $\nu\ast w^\dag=w^\dag\ast\nu=w^\dag$ for any $w, w^\dag\in\CC\mathbb N_0$.
\end{definition}
\begin{remark}\label{rmk:tripleproducts}
  The products $u^\dag\ast v$ and $u\ast v^\dag$ do not generate closed form expressions
  in terms of directed words of the form $w$ and/or $w^\dag$ in $\CC\mathbb N_0$. Observe
  that $u^\dag\ast v$ does not generate single letters at either end corresponding to
  single operators $P_a$ at the beginning of words $w$ or the end of words $w^\dag$.
  On the other hand the product $u\ast v^\dag$ generates single letters at both ends of the words involved.
  Further, note that for the product $u\ast v^\dag$, a third correction
  term is present in the product reflecting the analogous term in Lemma~\ref{lemma:PoppeNLSrule}.
  This correction term involves the insertion of a factor `$(1)$' between the directed words
  $u$ and $v^\dag$, corresponding to the insertion of $Q_1$ in
  the corresponding term in Lemma~\ref{lemma:PoppeNLSrule}. Further note that $(1)$ corresponds
  to $Q_1=(PP)_1=P_1P_0+P_0P_1$ and thus $(1)=(1,0)+(0,1)$.
  Lastly, as mentioned above, the letter $0=0^\dag$ corresponding to $PU=UP$.
  Products such as $0^\dag\ast w$ and $0\ast w^\dag$ follow as expected according to the rules outlined.
\end{remark}
Though the products $u^\dag\ast v$ and $u\ast v^\dag$ do not generate closed form expressions
in $\CC\mathbb N_0$, the triple products
\begin{equation*}
  u\ast v^\dag\ast w \qquad\text{and}\qquad u^\dag\ast v\ast w^\dag,
\end{equation*}
do generate closed form expressions in terms of directed words. Indeed any $\ast$-products
involving an odd number of words in which the factors alternate between the directed words and
adjoint directed words such as $w_1\ast w_2^\dag\ast w_3\ast\cdots\ast w_{2k}^\dag\ast w_{2k+1}$
or $w_1^\dag\ast w_2\ast w_3^\dag\ast\cdots\ast w_{2k}^\dag\ast w_{2k+1}$,
also generate closed form expressions in terms of directed words.
This property means that we can construct a \emph{triple system}. For such systems we require a trilinear map
$\mu\colon(\CC\mathbb N_0)^{\times 3}\to\CC\mathbb N_0$;
see Meyberg~\cite[p.~21]{Meyberg} or Ricciardo~\cite[p.~23]{Ricciardo}.
\begin{definition}[P\"oppe triple product]
We define the \emph{P\"oppe triple product} $\mu$ by: 
\begin{equation*}
  \mu\colon(u,v,w)\mapsto u\ast v^\dag\ast w. 
\end{equation*}
\end{definition}
\begin{remark}
The map $\mu$ is trilinear and, for the triple product above, we have 
\begin{equation*}
  \mu\bigl(w_1,w_2,\mu(w_3,w_4,w_5)\bigr)=\mu\bigl(w_1,\mu(w_,w_3,w_4),w_5\bigr)=\mu\bigl(\mu(w_1,w_2,w_3),w_4,w_5\bigr), 
\end{equation*}
as all three equal $w_1\ast w_2^\dag\ast w_3\ast w_4^\dag\ast w_5$, and so forth.
\end{remark}
The vector space $\CC\mathbb N_0$ endowed with this triple product, represents the \emph{P\"oppe triple system}.
We observe that if we restrict ourselves to the subset of directed words $w$ in $\CC\mathbb N_0$ and not their adjoints $w^\dag$,
then the triple product generates directed words of the same form $w$ and we can consider the 
triple subsystem of such directed words in $\CC\mathbb N_0$. Let us denote this
triple subsystem by $\CC\langle\mathbb N_0\rangle_\mu$. 
Lastly, we define the abstract version of the expansions for $[G_n]=[(PU)_n]=[(UP)_n]$ above,
which we denote as signature expansions in $\CC\mathbb N_0$. 
\begin{definition}[Signature expansions: directed words]\label{def:directedwordsig}
  For any $n\in\Nb$ we define the following linear \emph{signature expansions} $\bs{n}\in\CC\mathbb N_0$:
\begin{equation*}
  \boldsymbol{n}\coloneqq\sum_{w\in\mathcal C(n)}\chi(w)\,\bigl(w_\ast+(0w)_\ast\bigr) 
  \equiv\sum_{w\in\mathcal C(n)}\chi(w)\,\bigl(w_\ast^\dag+(w0)_\ast^\dag\bigr),
\end{equation*}
where if the word $w_\ast=a_1(a_2)\cdots(a_k)$ then $(0w)_\ast=0(a_1)(a_2)\cdots(a_k)$,
and similarly if the word $w_\ast^\dag=(a_k)\cdots(a_2)a_1$ then $(w0)_\ast^\dag=(a_k)\cdots(a_2)(a_1)0$. 
Note the expansions $\bs n$ are selfadjoint so $\bs n^\dag=\bs n$. Further note that $\bs 0=0=0^\dag$.
\end{definition}

Let us now return to the noncommutative $n$th order nonlinear Schr\"odinger equation at hand.
Suppose $P$, $\hat Q$ and $G$ satisfy the linear system in Definition~\ref{prescription:KdVNLS}.
Hence if $U\coloneqq(\id-\hat Q)^{-1}$ then $G=PU$ and using that $\pa_t P=-\mu(\mathrm{i}\mathcal I)^{n-1}\pa^nP$,
we observe, using the notation $P_n\coloneqq\pa^nP$ and $U_n\coloneqq\pa^nU$ and the
identities from Lemma~\ref{lemma:usefulNLSidentities}, that,
\begin{align*}
  \pa_tG&=-\mu(\mathrm{i}\mathcal I)^{n-1}P_nU+PU\bigl((\pa_tP)P+P(\pa_tP)\bigr)U\\
  &=-\mu(\mathrm{i}\mathcal I)^{n-1}P_n U-\mu\mathrm{i}^{n-1}PU\bigl(\mathcal I^{n-1}P_nP+P\mathcal I^{n-1}P_n\bigr)U\\ 
  &=-\mu(\mathrm{i}\mathcal I)^{n-1}\bigl(P_n U+(-1)^{n-1}PUP_nPU+PUPP_nU\bigr).
\end{align*}  
In terms of our directed word encoding for such operator forms the right-hand side in this last expression,
modulo $-\mu(\mathrm{i}\mathcal I)^{n-1}$, respectively term by term, has the form:
\begin{equation*}
n+(-1)^{n-1}\cdot0(n,0)+0(0,n).
\end{equation*}
Note the single basic letter $n\in\CC\la\mathbb N_0\ra_\mu$ corresponds to $P_nU$.
\begin{remark}[Notation convention]
  Note in the second term `$(-1)^{n-1}\cdot0(n,0)$' above, we have separated the coefficient $(-1)^{n-1}\in\CC$
  from the directed word $0(n,0)\in\mathbb N_0^\ast$.
  We use this notation henceforth, i.e.\/ we separate the coefficients, say $\alpha\in\CC$, from the directed
  words $w\in\mathbb N_0^\ast$ using the notation $\alpha\cdot w$; as we did for the P\"oppe algebra.
\end{remark}
\begin{remark}\label{rmk:NLSsimplify}
  If we use $U=\id+PUP$ in the operator relation above, it simplifies to 
  $\pa_tG=-\mu(\mathrm{i}\mathcal I)^{n-1}\bigl(U(\pa^n P)U+(-1)^{n-1}G(\pa^nP)G\bigr)$.
\end{remark}
Our goal is to show that the form $n+(-1)^{n-1}\cdot0(n,0)+0(0,n)$ can be expressed in terms of a triple system P\"oppe polynomial
$\pi_n=\pi_n(\bs 0,\bs 1,\bs 2,\ldots,\bs n)$ of the signature expansions of the form,
\begin{equation*}
\pi_n\coloneqq \sum_{k=1}^{n+1}\sum_{a_1a_2\cdots a_k} c_{a_1a_2\cdots a_k}\bs a_1\ast\bs a_2\ast\cdots\ast\bs a_k,
\end{equation*}
in $\CC\mathbb N_0$, where the second sum is over all words $a_1a_2\cdots a_k$ we can construct from the
alphabet $\{0,1,2,\ldots,n\}$ such that $a_1+a_2+\cdots+a_k=n-k+1$, and the coefficients $c_{a_1a_2\cdots a_k}$, here at least,
are real. 

We are now in a position to prove the results of Theorem~\ref{thm:NLS}, which we achieve through
the following two examples.
\begin{example}[Nonlinear Schr\"odinger equation integrability]\label{ex:illustrativeNLSex}
  Assume $n=2$. Recall $\chi(1)=1$ and thus from the directed word signature expansion in Definition~\ref{def:directedwordsig},
  we observe,
  \begin{equation*}
    \boldsymbol{1}=1+0(1)=1^\dag+(1)0
    \qquad\text{and}\qquad (1)=(1,0)+(0,1).
  \end{equation*}
  Hence working from right to left, we observe:  
  \begin{align*}
    \bs 0\ast\bs 0\ast \bs 0=&
    0\ast0^\dag\ast 0\\
    =&0\ast\bigl((0,1)+(1,0)\bigr)\\
    =&0\ast(0,1)+0\ast(1,0)\\
    =&1(0,1)+0(1,1)+0(1)(0,1)+1(1,0)+0(2,0)+0(1)(1,0)\\
    =&1(1)+0(1,1)+0(1)(1)+0(2,0).
  \end{align*}
  Now consider the form $n+(-1)^{n-1}\cdot0(n,0)+0(0,n)-\bs n$ with $n=2$, which gives
  \begin{align*}
    2-0(2,&0)+0(0,2)-\bs 2\\
    =&\;2-0(2,0)+0(0,2)-\Bigl(\chi(2)\cdot\bigl(2+0(2)\bigr)+\chi(11)\cdot\bigl(1(1)+0(1)(1)\bigr)\Bigr)\\
    =&\;2-0(2,0)+0(0,2)-2-0\bigl((2,0)+2\cdot(1,1)+(0,2)\bigr)-2\cdot\bigl(1(1)+0(1)(1)\bigr)\\
    =&\;-2\cdot\bigl(0(1,1)+1(1)+0(1)(1)+0(2,0)\bigr).
  \end{align*}
  This is obviously `$-2$' times the result of the product $\bs 0\ast\bs 0\ast \bs 0$.
  We have thus established the identity $2-0(2,0)+0(0,2)\equiv\bs 2-2\cdot\bs 0\ast\bs 0\ast \bs 0$
  giving the required result, and that the noncommutative nonlinear Schr\"odinger equation is integrable.
\end{example}
We now consider the next order in the noncommutative nonlinear Schr\"odinger hierarchy,
the noncommutative modified Korteweg--de Vries equation,
and at the same time illustrate the power of the P\"oppe triple system approach we introduced. 
\begin{example}[Modified Korteweg--de Vries equation integrability]\label{ex:mKdV}
Assume $n=3$. First we consider the following signature expansions for directed words.
We observe, working from right to left, that,
\begin{align*}
\bs 1\ast\bs 0\ast\bs 0=&\;\bs 1\ast 0^\dag\ast0\\
=&\;\bs 1\ast\bigl((0,1)+(1,0)\bigr)\\
=&\;\bigl(1+0(1)\bigr)\ast\bigl((0,1)+(1,0)\bigr)\\
=&\;2(1)+1\bigl((1,1)+(2,0)\bigr)+1(1)(1)
+0\Bigl(\bigl((0,1)+(1,0)\bigr)\ast\bigl((0,1)+(1,0)\bigr)\Bigr)\\
=&\;2(1)+1\bigl((2,0)+(1,1)+(1)(1)\bigr)\\
&\;+0\bigl((1,1)(1)+(1)(1,1)+(1)(2,0)+(0,2)(1)+(1)(1)(1)\bigr),
\end{align*}
where the last step involves using the identity $(1)\equiv(1,0)+(0,1)$ multiple times.
Similar computations reveal that,
\begin{align*}
\bs 0\ast\bs 0\ast\bs 1=&\;\bs 0\ast0^\dag\ast\bs 1\\
=&\;\bs 0\ast0^\dag\ast\bigl(1+(0)(1)\bigr)\\
=&\;0\ast\bigl((1,1)+(0,2)+(1,0)(1)+(0,1)(1)\bigr)\\
=&\;1\bigl((0,2)+(1,1)+(1)(1)\bigr)\\
&\;+0\bigl((2,1)+(1)(1,1)+(1,2)+(1)(0,2)+(2,0)(1)+(1,1)(1)+(1)(1)(1)\bigr),
\intertext{and}
\bs 0\ast\bs 1\ast\bs 0=&\;\bs 0\ast\bigl(1^\dag+(1)0\bigr)\ast0\\
=&\;\bs 0\ast\bigl((2,0)+(1,1)+(1)(1)(1)\bigr)\\
=&\;1\bigl((2,0)+(1,1)+(1)(1)\bigr)\\
&\;+0\bigl((3,0)+(1)(2,0)+(2,1)+(1)(1,1)+(2,0)(1)+(1,1)(1)+(1)(1)(1)\bigr).
\end{align*}
Now consider the form $n+(-1)^{n-1}\cdot0(n,0)+0(0,n)-\bs n$ with $n=3$, which gives
\begin{align*}
  3+0(3,0)+0(0,3)-\bs 3=&\;3+0(3,0)+0(0,3)\\
  &\;-\Bigl(\chi(3)\cdot\bigl(3+0(3)\bigr)+\chi(21)\cdot\bigl(2(1)+0(2)(1)\bigr)\\
  &\;+\chi(12)\cdot\bigl(1(2)+0(1)(2)\bigr)+\chi(111)\cdot\bigl(1(1)(1)+0(1)(1)(1)\bigr)\Bigr)\\
  =&\;-3\cdot0\bigl((2,1)+(1,2)\bigr)-3\cdot\bigl(2(1)+0(2)(1)\bigr)\\
  &\;-3\cdot\bigl(1(2)+0(1)(2)\bigr)-6\cdot\bigl(1(1)(1)+0(1)(1)(1)\bigr).
\end{align*}
So the goal now is to determine if there exist constants $\alpha$, $\beta$ and $\gamma$, at least one
of which is non-zero, such that (modulo $-3$),
\begin{equation*}
   0\bigl((2,1)+(1,2)\bigr)+2(1)+0(2)(1)+1(2)+0(1)(2)+2\cdot\bigl(1(1)(1)+0(1)(1)(1)\bigr)=\hat\pi_3(\bs 0, \bs 1),
\end{equation*}
where the right-hand side $\hat\pi_3$ is the polynomial,
\begin{equation*}
  \hat\pi_3(\bs 0, \bs 1)\coloneqq\alpha\cdot\bs 1\ast\bs 0\ast\bs 0+\beta\cdot\bs 0\ast\bs 1\ast\bs 0+\gamma\cdot\bs 0\ast\bs 0\ast\bs 1.
\end{equation*}
Substituting the expressions for $\bs 1\ast\bs 0\ast\bs 0$, $\bs 0\ast\bs 1\ast\bs 0$
and $\bs 0\ast\bs 0\ast\bs 1$ just above into the right-hand of the relation,
and equating coefficients of the terms shown, we arrive at an overdetermined linear
system of algebraic equations $AC=B$ for the vector of three unknowns $C\coloneqq(\alpha, \beta, \gamma)^{\mathrm{T}}$.
In Table~\ref{table:coeffs} we give the coefficient matrix $A$ and the entries in the non-homogeneous vector $B$. 
We observe the first three rows give us natural pivot terms along the diagonal and solving these three equations
reveals that $\alpha=\gamma=1$ and $\beta=0$. It is easy to check that the remaining equations are consistent
with this solution. Hence we have thus established the identity,
\begin{equation*}
  3+0(3,0)+0(0,3)\equiv\bs 3-3\cdot(\bs 1\ast\bs 0\ast\bs 0+\bs 0\ast\bs 0\ast\bs 1).
\end{equation*}
This establishes that the noncommutative modified Korteweg--de Vries equation is integrable.
\end{example}

\begin{remark}[Ordered word basis]
  In Example~\ref{ex:mKdV} we used a particular subcollection of the ordered words on $\CC\mathbb N_0$
  associated with the order of the problem $n=3$. These are given down the left column in Table~\ref{table:coeffs}.
  In general, we may need to split the subcollection shown in to a finer basis, so that for example, we may
  need to split $2(1)$ separately into $2(1,0)$ and $2(0,1)$, and so forth.
\end{remark}

\begin{table}
\caption{Coefficients appearing in the expansion of the polynomial $\hat\pi_3=\hat\pi_3(\bs 0,\bs 1)$. 
Each column shows the factor contributions to the real coefficients of the directed words 
shown in the very left column, for each of the monomials in $\hat\pi_3$ shown
across the top row. The final column represents the right-hand side $B$ of the equation $AC=B$.}
\label{table:coeffs}
\begin{center}
\begin{tabular}{|l|ccc|c|}
\hline
$\phantom{\biggl|}\CC\mathbb N_0$&$\bs 1\ast\bs 0\ast\bs 0$&$\bs 0\ast\bs 1\ast\bs 0$ & $\bs 0\ast\bs 0\ast\bs 1$ &$B$\\
\hline
$2(1)$ & 1 & & & 1\\
\hline
$1(2,0)$ & 1 & 1 &   & 1\\
$1(1,1)$ & 1 & 1 & 1 & 2\\
$1(0,2)$ &   &   & 1 & 1\\
$1(1)(1)$ & 1 & 1 & 1 & 2\\
\hline
$0(3,0)$ &   & 1 &  & 0\\
$0(2,1)$ &   & 1 & 1 & 1\\
$0(1,2)$ &   &   & 1 & 1\\
$0(1)(2,0)$ & 1 & 1 &   & 1\\
$0(1)(1,1)$ & 1 & 1 & 1 & 2\\
$0(1)(0,2)$ &   &   & 1 & 1\\
$0(2,0)(1)$ &   & 1 & 1 & 1\\
$0(1,1)(1)$ & 1 & 1 & 1 & 2\\
$0(0,2)(1)$ & 1 &   &   & 1\\
$0(1)(1)(1)$ & 1 & 1 & 1 & 2\\
\hline
\end{tabular}
\end{center}
\end{table}

\begin{remark}[Scalability]
Examples~\ref{ex:illustrativeKdVex}, \ref{ex:illustrativeNLSex} and \ref{ex:mKdV} 
demonstrate that the P\"oppe algebra and triple system can be used to succintly prove identities that
establish integrability for the noncommutative potential Korteweg-de Vries,
nonlinear Schr\"odinger and modified Korteweg--de Vries equations, respectively.
Example~\ref{ex:mKdV} for the noncommutative modified Korteweg--de Vries equation
in particular, demonstrates that these combinatorial structures provide a systematic approach
establishing integrability for higher order members of the respective hierarchies.
Indeed in Malham~\cite{Malham:KdVhierarchy}, the P\"oppe algebra was used to prove the existence of a unique
non-commutative integrable equation at each odd order, and that, that
equation matched the Lax hierarchy equation.
\end{remark}

\begin{remark}[Scattering]
We do not pursue the scattering and inverse scattering problems associated with the nonlinear Schr\"odinger equation
herein, and, in particular, how to reconstruct the scattering data $p$ from arbitrary initial data $g=g(0,0;x,0)$.
See Zakharov and Shabat~\cite{ZS,ZS2} and Drazin and Johnson~\cite{DJ}, as well as Dodd \textit{et al.}~\cite{DEGM}
for details on this. However, given arbitrary smooth initial scattering data in the form $p_0=p_0(x)$,
integrable on $\R$, we can generate solutions $g=g(0,0;x,t)$ to either the noncommutative nonlinear Schr\"odinger or
modified Korteweg-de Vries equations, as follows. First we generate the corresponding solution $p=p(x;t)$ to the linear
equation $\pa_tp=\mu(\mathrm{i}\mathcal I)^{n-1}\pa_x^np$ analytically, either in Fourier space or as a convolution integral.
Second, we then generate the corresponding function $\hat q=\hat q(x;t)$
by computing the integral associated with the composition $P^2$ of the operator $P$. 
Thirdly and finally, we then, for any given time $t\geqslant0$, generate a corresponding solution $g=g(0,0;x,t)$
by solving the linear Fredholm equation in Definition~\ref{prescription:KdVNLS} for $g(0,z;x,t)$.
Setting $z=0$ generates the corresponding solution $g=g(0,0;x,t)$ to the corresponding nonlinear equation.
We demonstrate this approach in practice with numerical simulations in Appendix~\ref{sec:numericalsimulations}.
\end{remark}

\section{Discussion}\label{sec:discussion} 
There are many further directions and extensions of the results we have presented 
that we intend to pursue next. For example, of immediate interest would be to extend
the triple system algebraic approach, presented in Section~\ref{sec:NLS} for the
noncommutative nonlinear Schr\"odinger and modified Korteweg--de Vries equations, to all
orders. For example, this has been completed for the whole noncommutative potential Korteweg--de Vries
hierarchy in Malham~\cite{Malham:KdVhierarchy}. Of particular interest will be the uniqueness
of the coefficients in the polynomial expansions for order $n\geqslant5$, when several
independent integrable equations, which are not reducible to each other, are known to exist,
see for example Gerdjikov~\cite{Gerdjikov}.

There are now also many stochastic representations for solutions of integrable equations
such as the Korteweg--de Vries equation.
See for example Thieullen and Vigot~\cite{TV}, where the Fredholm determinant form for
the solution is related to the characteristic functions of simple processes such
as Ornstein--Uhlenbeck processes and square-root mean-reverting processes.
There is also the connection to random matrices, see for example Tracy and Widom~\cite{TracyWidom}.
To add to these connections we make the following observation regarding a
connection between a stationary solution of the Fisher--Kolmogorov--Petrovsky--Piskunov (FKPP)
equation which decay to zero in the far field, and soliton solutions of the scalar Korteweg--de Vries equation.
Consider stationary solutions $U=U(z)$ of the FKPP equation which we assume here has the form, 
$U''+\beta U^2-\gamma U=0$.
Here the constant $\beta\in\R$ is related to the autocatalytic reaction rate and the constant
$\gamma\in\R$ is related to the linear decay rate. Suppose we look for travelling wave
solutions to the scalar Korteweg--de Vries equation of the form $V=V(z)$, where $z=x+\gamma t$,
so they satisfy, $V'''+2\beta VV'-\gamma V'=0$.
The choice $\beta=-3$ corresponds to the form of the Korteweg--de Vries equation we considered herein.
Assuming the fields decay in the far field, then integrating the equation for $V=V(z)$ reveals
that $U\equiv V$. Indeed we can look for a solution of the form $U=A\mathrm{sech}^2\alpha z$
to the equation for $U$, and show that it is indeed a solution provided $\gamma=4\alpha^2$
and $\beta A=6\alpha^2$. In the case $\beta=-3$, then $A=-2\alpha^2$. Thus if we set $\alpha=a/2$,
then we observe that $U=-\frac12 a^2\mathrm{sech}^2(\frac12 az)$ is a solution to both the
stationary FKPP equation and a soliton solution to the Korteweg--de Vries equation with
the travelling coordinate $z=x+a^2t$. It is well-known that the FKPP equation solution has
a representation in terms of branched Brownian motion, see Berestycki~\cite{Berestycki}.
An intriguing open question concerns how the pinned stationary `$\mathrm{sech}^2$' solution
to the FKPP equation fits into that context.

We have considered solutions to the Korteweg--de Vries and nonlinear Schr\"odinger
equations via Grassmannian flows using the top cell coordinate patch $\Lambda_0(\Hb,\Vb)$.
This paramaterisation relied on the regularised determinant $\mathrm{det}_2\bigl(\id-\hat Q(t)\bigr)\neq0$.
There are singular solutions of the scalar Korteweg--de Vries equation such as the classical `$\mathrm{cosech}^2$' solution
or rational solitary waves (though the latter lie outwith our current setup as they do not decay as $x\to-\infty$);
see Drazin and Johnson \cite[p.~36,18]{DJ}. 
For more details on singular solutions, such as breathers and so forth, see Dubard \textit{et al.\/} \cite{DGKM}.
For the current discussion, let focus solely on the scalar potential Korteweg--de Vries equation.
The singlarities in the example solutions mentioned, propagate continuously in time
at values $x_\ast=x_\ast(t)$ that evolve/move as time progresses. Away from the singularites at $(x_\ast,t_\ast)$,
the regularised determinant is not zero and the paramaterisation $G=P(\id-P)^{-1}$ and solution $g=[G]$
are well-defined. However at $(x_\ast,t_\ast)$, the regularised determinant $\mathrm{det}_2\bigl(\id-P(x_\ast,t_\ast)\bigr)=0$.
Let us now take the following perspective. The evolution of the Hilbert--Scmidt parameterisation $G$,
is a manifestation of the evolution of the linear flow for $P$, which is not singular, and
a particular coordinate patch representation for the Fredholm Grassmannian, namely that of the
top cell/patch $\Lambda_0(\Hb,\Vb)$. At the points $(x_\ast,t_\ast)$, we are simply choosing a
poor coordinate patch representation for the flow
\begin{equation*}
W=\begin{pmatrix} \id-P\\P\end{pmatrix}, 
\end{equation*}
on the Fredholm Stiefel manifold $\mathrm{St}(\Hb,\Vb)$, when it is projected onto the Fredholm Grassmannian $\mathrm{Gr}(\Hb,\Vb)$. 
In the case of the potential Korteweg--de Vries equation, the Stiefel flow satisfies,
\begin{equation*}
\pa_t\begin{pmatrix} \id-P\\P\end{pmatrix}=\mu\pa_x^n\begin{pmatrix} \id-P\\P\end{pmatrix},
\end{equation*}
i.e.\/ $\pa_tW=\mu\pa_x^nW$. Formally, suppose for example, we use a suitable wavelet representation for solutions to the
partial differential system above, so that we are able to represent the solution as a time evolution
of a sequence of coefficents in $\Hb=\ell^2(\CC)\times\ell^2(\CC)$,
and $P$ is a Hilbert--Schmidt operator from $\Vb\to\Vb$ with $\Vb=\ell^2(\CC)$.
Suppose near $(x_\ast,t_\ast)$, the regularised determinant $\mathrm{det}_2\bigl(\id-P(x_\ast,t_\ast)\bigr)$ is close to zero,
and exactly zero at $(x_\ast,t_\ast)$.
Recall from Section~\ref{sec:FredholmGrassmannian}, we can always find a coordinate patch representation
on the Fredholm Grassmannian for which the regularised determinant is non-zero.
This means that we are always able to nominate a different subset of rows of $W$, rather than the top block $\id-P$
which characterises the top cell/patch, for which the regularised determinant is not zero.
Suppose the suitable rows are characterised by a set, say $\Sb=\{i_1,i_2,\ldots\}\subseteq\mathbb N$,
and we have $\mathrm{det}_2(W_\Sb)\neq0$. Hence now the group-like element is $W_\Sb$, which
is a Hilbert--Schmidt perturbation of the identity, and the Hilbert--Schmidt operator is $W_{\Sb^\circ}$.
Note $W=\bigl(\id-P,P\bigr)$ still satisfies the same flow on the Stiefel manifold $\mathrm{St}(\Hb,\Vb)$
shown above, we are just choosing a different subset of rows of $W$ to base our projection onto the
Fredholm Grassmannian $\mathrm{Gr}(\Hb,\Vb)$ on. Since $\mathrm{det}_2(W_\Sb)\neq0$,
the corresponding paramaterisation $G_\Sb=W_{\Sb^\circ}\,W_{\Sb}^{-1}$ is well-defined.
The idea is to choose to represent the Korteweg--de Vries solutions at $(x_\ast,t_\ast)$ by $G_\Sb$ instead,
or more practically in a small region around $(x_\ast,t_\ast)$ which exists due to the manifold structure. 
Though the flow for $W$ on the Stiefel manifold $\mathrm{St}(\Hb,\Vb)$ is the same, in general,
$G_\Sb$ satisfies a different nonlinear evolution. This is just the result of a different projection.
See for example, Remark~\ref{rmk:scatterGrassmannian} in Appendix~\ref{sec:Evansfunction}.
Let $\Sb_0$ denote the original top cell/patch parameterisation in $\Lambda_0(\Hb,\Vb)$
so that $W_{\Sb_0}=\id-P$ and $W_{S_0^\circ}=P$.
Suppose further, for example, that at $(x_\ast,t_\ast)$ where the determinant $\mathrm{det}_2(W_{\Sb_0})$ is zero, 
that it turns out that the coordinate patch `opposite' to the top cell/patch $\Lambda_0(\Hb,\Vb)$ is a
suitable choice so $\mathrm{det}_2(W_{\Sb_0^\circ})\neq0$ and $W_{\Sb_0}$ is a Hilbert--Schmidt operator on $\Vb$.
In other words the `opposite' coordinate patch is characterised by $\Sb=\Sb_0^\circ$.
Since $\pa_tW=\mu\pa_x^nW$, we observe that $[G_\Sb]=[W_{\Sb^\circ}\,W_{\Sb}^{-1}]$ satisfies 
the nonlocal potential Korteweg--de Vries equation given in Theorem~\ref{thm:KdV},
though it is generated via a different projection of the same data. However, at $(x_\ast,t_\ast)$ the kernel
$[G_\Sb](y,z;x,t)$ is not singular, and thus $[G_\Sb](0,0;x,t)$ is well-defined.
One of our next goals is to see how this would work in practice, to track and/or incorporate singularities in solutions.
We note of course, that the corresponding argument for the nonlinear Schr\"odinger equation requires more care.
In particular, the flow in different coordinate patches to the top cell/patch $\Lambda_0(\Hb,\Vb)$ may not be that of the  
nonlinear Schr\"odinger equation; though this may not be crucial.

There are now several more general contexts in which we can now think about the integrable systems
we have considered herein, namely the noncommutative potential Korteweg--de Vries, nonlinear Schr\"odinger and
modified Korteweg--de Vries equations. For example, in the case of the noncommutative potential Korteweg--de Vries equation,
suppose we are given initial data $g(0,0;x,t)$. Then in principle we can use the scattering transform
outlined in Appendix~\ref{sec:scatteringproblem} to generate the initial scattering data $p=p(y;0)$. 
With $p=p(y+z+x,0)$ in hand we can generate $g(y,z;x,0)$ by solving the linear Fredholm equation
$P=G(\id-P)$. We can then integrate in time the nonlocal potential Korteweg--de Vries equation
for $g(y,z;x,t)$ given in Theorem~\ref{thm:KdV}, noting that the actual noncommutative potential Korteweg--de Vries
flow is given by $g(0,0;x,t)$. The advantage of integrating $g(y,z;x,t)$ is that it might be possible to
recover the scattering data $P=P(x,t)$ from $G=G(x,t)$ by solving the relation $P=G(\id-P)$ for $P$
in terms of $G$, without resorting to the scattering transform. There is some
uncertainty here as it is not clear that the scattering operator genereated in the manner suggested
would be a Hankel operator, in which case we would use the scattering transform.
The point of all this is the following. Suppose in the evolution of $g(0,0;x,t)$, a singularity occurs at $(x_\ast,t_\ast)$.
Then in the vicinity of the singularity, we can pull back the potential Korteweg--de Vries flow 
to the flow for $W$ on the Stiefel manifold and choose to project back down onto a different
coordinate patch there in order to propagate the flow locally in space-time, around the singularity.
Hence there may be some advantage to propagating $g(y,x;x,t)$ rather than just $g(0,0;x,t)$.
From another perspective, we have seen that the Korteweg--de Vries flow on $\Lambda_0(\Hb,\Vb)$
stems from the linear flow $\pa_tW=\mu\pa_x^nW$ for $W$ on the Stiefel manifold $\mathrm{St}(\Hb,\Vb)$.
Once we have computed the scattering data $p=p(y;0)$ from $g(0,0;x,0)$, we can determine the
evolution of $W$. Perhaps we should then think of the flow induced on the Fredholm Grassmannian $\mathrm{Gr}(\Hb,\Vb)$,
which incorporates all the different nonlinear flows on all the different coordinate patches,
as a generalised flow form for the noncommutative potential Korteweg--de Vries equation,
which is itself generated on the specific patch $\Lambda_0(\Hb,\Vb)$?

Lastly, as we mentioned in our introduction, there are many further applications of 
Fredholm Grassmannian flows, in particular to Smoluchowski-type coagulation equations.
In our companion paper, Doikou \textit{et al.\/} \cite{DMSW:graphflows}, we explore these applications
in detail. We also generalise our notion of Fredholm Grassmannian flows to nonlinear graph flows and
demonstrate that the inviscid Burgers equation is an example of a nonlinear graph flow.
As a result, we show that the Smoluchowski coagulation equation, in the case of either the additive
or multiplicative frequency kernels, is integrable as a nonlinear graph flow.

\begin{acknowledgement}
  We thank Chris Eilbeck and Mohammed Kbiri Alaoui for very useful discussions.
  We would also like to thank the anonymous referees for their insightful comments and suggestions
  that helped to significantly improve the original manuscript.
\end{acknowledgement}

\appendix

\section{Numerical Simulations}\label{sec:numericalsimulations}
The goal of this section is to demonstrate the use of the P\"oppe approach to
numerically simulate the scalar Korteweg--de Vries and nonlinear Schr\"odinger
equations from given initial scattering data corresponding in essence to result (II) in Lemma~\ref{lemma:KdVprescription}.
However before proceeding with that endeavour we briefly outline, in the
limited scope of reflectionless potentials, how we can numercially solve the scattering problem, and simultaneously,
how we numerically solve the linear Fredholm equation.
We outline the scattering and inverse scattering problems in detail in Appendix~\ref{sec:scatteringproblem}.
Suppose we are given the potential function
(recall from Remarks~\ref{rmk:GLM} and \ref{rmk:KdVsolutions} that $X=-x/2$):
$U(X)\coloneqq -U_0\mathrm{sech}^2(X)=-U_0\mathrm{sech}^2(x/2)=4\pa_xg(0,0;x,0)$.
In the case $U_0=6$, this is an example reflectionless potential. We also know
there are two eigenvalues, which we can find analytically. Here though, we use the
Evans function to numerically evaluate the eigenvalues, see Appendix~\ref{sec:Evansfunction}
where we outline in detail how to evaluate the transmission and reflection coefficients
for both the discrete and continuous spectra. The right panel in Figure~\ref{fig:reconstructionexample}
shows a plot of the Evans function along the negative real $\lambda$-axis where
$\lambda$ is the eigenvalue parameter; see Appendix~\ref{sec:Evansfunction} for more details.
Zeros of the Evans function coincide with eigenvalues and we observe there are two eigenvalues,
as anticipated. Using standard root-finding algorithms with the Evans function reveals the eigenvalues are indeed
$\lambda_1=-1/4$ and $\lambda_2=-1$. This means we should take the scattering data,
where from Remark~\ref{rmk:GLMandRiccati} in Appendix~\ref{sec:scatteringproblem}
we know $F(Z+X)\coloneqq-p(x+z)\equiv-p(Z+X)$, to be 
\begin{equation*}
  p(x)=\tfrac12\frac{B(\lambda_1)}{D'(\lambda_1)}\frac{\mathrm{e}^{\sqrt{-\lambda_1}x}}{\sqrt{-\lambda_1}}
       +\tfrac12\frac{B(\lambda_2)}{D'(\lambda_2)}\frac{\mathrm{e}^{\sqrt{-\lambda_2}x}}{\sqrt{-\lambda_2}}.
\end{equation*}
Note, these are the types of exponentially growing solutions we anticipated
in Result (I) in Lemma~\ref{lemma:KdVprescription}.
Now we consider whether, given this scattering data, we can solve the linear Fredholm equation to
reconstruct $g(0,0;x,0)$ or indeed $\pa_xg(0,0;x,0)$. 
Recall from our discussion preceding the solitary wave Example~\ref{ex:solitarywave} that 
in practice, for any given time $t\in[0,T]$, we can generate a solution $g=g(0,0;x,t)$
to the potential Korteweg--de Vries equation from given scattering data $p=p(x;t)$, 
by solving the linear Fredholm equation:
\begin{equation*}
p(z+x;t)=g(0,z;x,t)-\int_{-\infty}^0g(0,\xi;x,t)\hat q(\xi,z;x,t)\,\rd\xi,
\end{equation*}
for $g=g(0,z;x,t)$. Note here we set $\hat q(\xi,z;x,t)\coloneqq p(\xi+z+x;t)$.
Setting $z=0$ generates the solution $g=g(0,0;x,t)$ corresponding
to the scattering data $p=p(x;t)$. We solve this linear Fredholm integral equation
via the following crude approximation---assume $t\geqslant0$ is fixed.
We restrict the range of the integral and the domain of $z$ to $[-L/2,0]$, while $x\in[-L/2,L/2]$,
with $L>0$ sufficiently large. We discretise $z$, $\xi$ and $x$ to nodal values $z_n$, $\xi_n$ and $x_n$, each
with separation step $h$, and with $z_n=\xi_n=x_n$ for $x_n\leqslant0$. For each $x_\ell$ we generate the row vector and matrix:
$\widehat{P}_{n}(x_\ell;t)\coloneqq p(z_n+x_\ell;t)$ and $\widehat{Q}_{m,n}(x_\ell;t)\coloneqq p(\xi_m+z_n+x_\ell;t)$.
We approximate the integral in the linear Fredholm equation by a left Riemann sum and we
set $\widehat{G}=\widehat{G}(x_\ell;t)$ to be the row vector of unknown values $\widehat{G}_{m}(x_\ell;t)\coloneqq g(0,z_m;x_\ell,t)$.
Consequently we solve the linear algebraic problem (at any given $t$ and for each $x_\ell$):
\begin{equation*}
\widehat{P}=\widehat{G}-h\widehat{G}\widehat{Q}\qquad\Leftrightarrow\qquad \widehat{G}=\widehat{P}(\id-h\widehat{Q})^{-1}.
\end{equation*}
Note that in the relation on the left, the $h$ factor and matrix product realise the left Riemann sum approximation
of the integral in the linear Fredholm equation.
The relation on the right generates the row vector approximation $\widehat{G}_{m}(x_\ell;t)\coloneqq g(0,z_m;x_\ell,t)$,
which we then evaluate at $z_m=0$---in our approximations the origin is the final $z_m$ nodal point.
In the left panel in Figure~\ref{fig:reconstructionexample}, we plot the original
potential function for $\pa_xg(0,0;x,0)$, namely `$-(3/2)\mathrm{sech}^2(\frac12x)$'.
We also plot the approximate solution $g(0,0;x,0)$ generated from $\widehat{G}_{m}(x_\ell;t)$ at $z_m=0$,
as well as the derivative $\pa_x g(0,0;x,0)$ computed from $\widehat{G}_{m}(x_\ell;t)$ at $z_m=0$ via
finite differences. In the middle panel we present an exactly analogous
plot to the left panel, except that we have deliberately introduced a phase shift
in the first term involving $\lambda_1$ in the expression for $p=p(x)$ above, in order
to illustrate there are indeed two distinct underlying features present.
Lastly, as one further demonstration, suppose we assume scattering data of the form
\begin{equation*}
  p(x;t)=-2a_1\mathrm{e}^{a_1x+a_1^3t}-2a_2\mathrm{e}^{a_2x+a_2^3t},
\end{equation*}
with $a_1=1.8$ and $a_2=1.1$. Substituting this into the linear Fredholm equation
and solving it numerically as described above, at discrete times, 
we obtain the solution to the Korteweg--de Vries equation shown in Figure~\ref{fig:twosoliton},
i.e.\/ the two-soliton collision.

\begin{figure}
  \begin{center}
    \includegraphics[width=4cm,height=3cm]{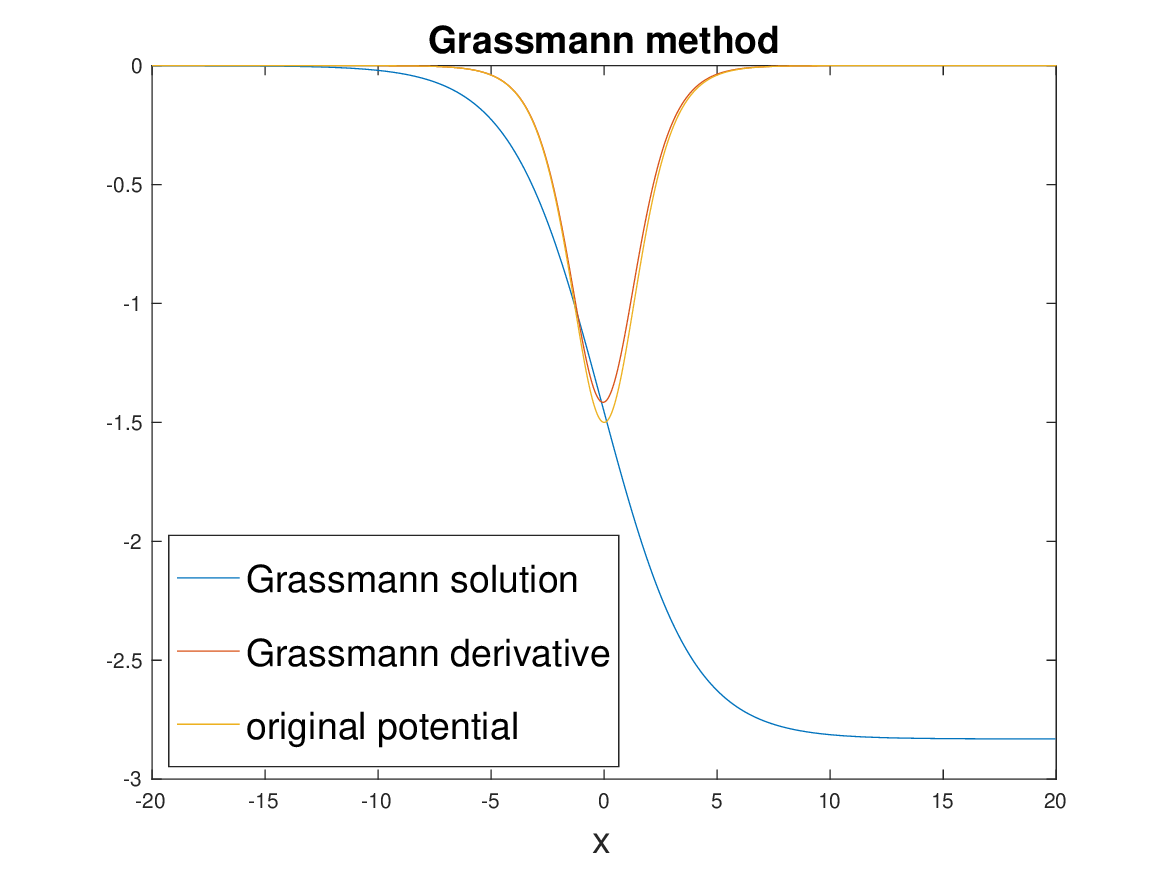}
    \includegraphics[width=4cm,height=3cm]{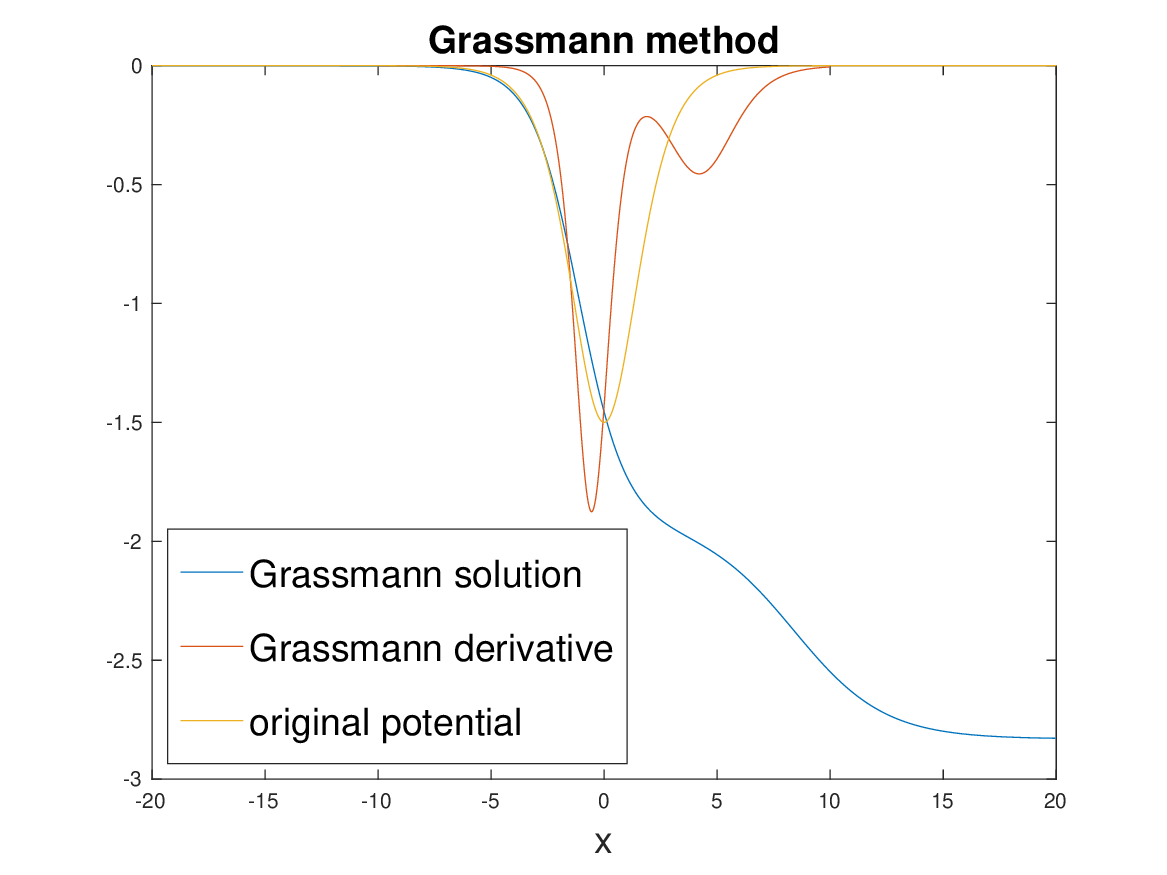}
    \includegraphics[width=4cm,height=3cm]{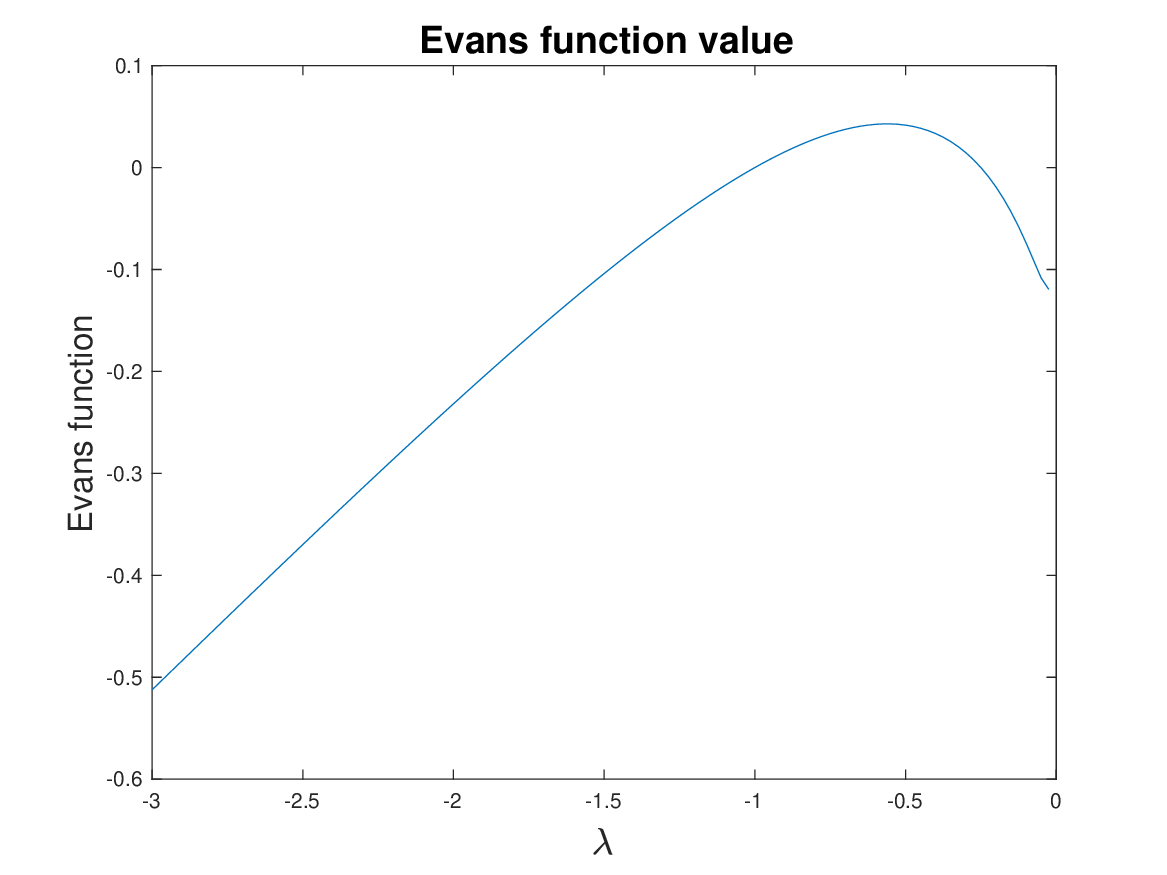}
  \end{center}
  \caption{We plot the results from the numerical computation of the scattering and inverse
    scattering problems for an original reflectionless potential function $\pa_xg(0,0;x,0)=(3/2)\mathrm{sech}^2(x/2)$.
    We used the Evans function, as shown in the right panel, to determine the two discrete eigenvalues.
    They correspond to the zeros of the Evans function. We then constructed the corresponding
    scattering data function $p=p(x;0)$. We substituted this into the linear Fredholm equation
    which we solved numerically, to re-generate $g(0,0;x,0)$ and then $\pa_xg(0,0;x,0)$.
    This is shown in the left panel. The middle panel is analogous to the left
    panel except that we have introduced a phase shift in one of the two 
    terms in the scattering data function $p=p(x;0)$ in order to illustrate the 
    two distinct underlying features present.}
\label{fig:reconstructionexample}
\end{figure}

\begin{figure}
  \begin{center}
  \includegraphics[width=6cm,height=5cm]{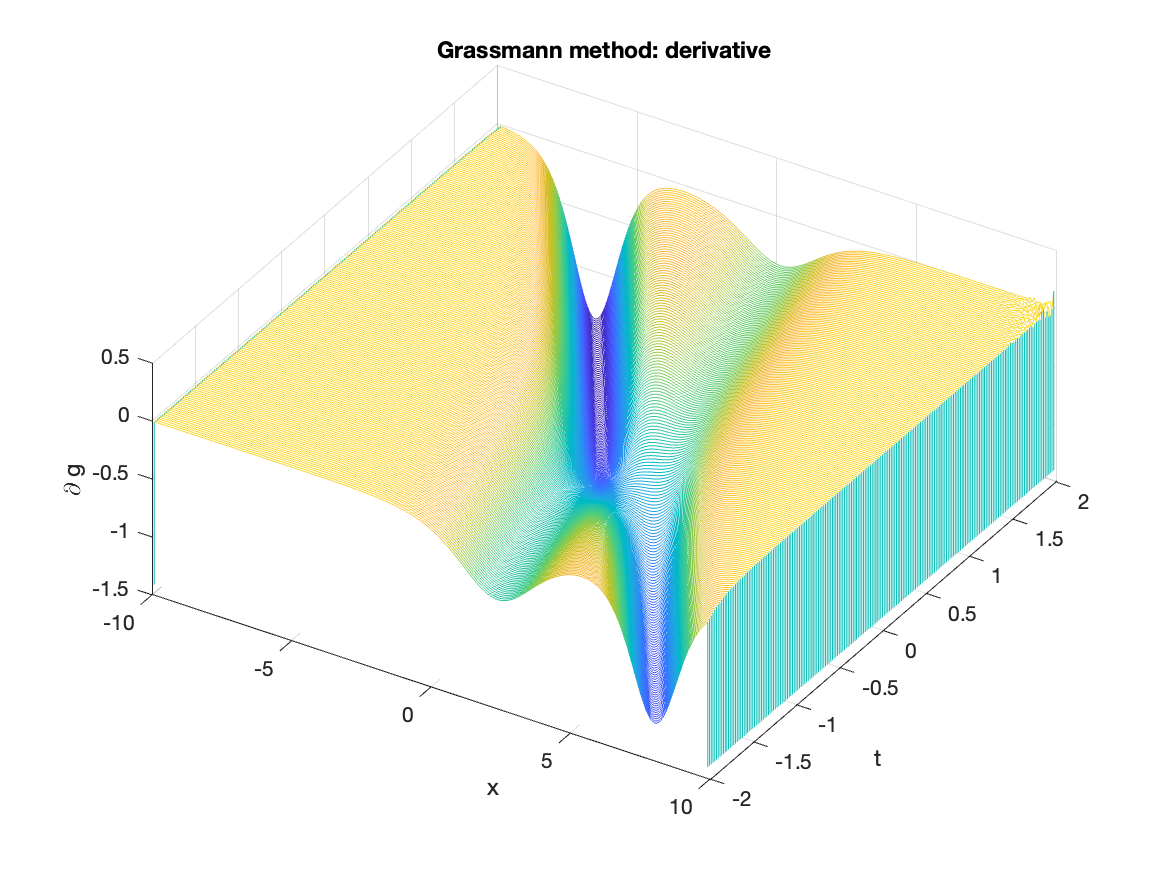}
  \end{center}
  \caption{We plot the solution to the Korteweg--de Vries equation which
    demonstrates a two-soliton collision. This was achieved by assuming
    a suitable scattering data function form $p=p(x;t)$ with two independent
    exponential terms, and then numerically solving the linear Fredholm equation
    to generate $g(0,0;x,t)$ and then $\pa_xg(0,0;x,t)$.}
\label{fig:twosoliton}
\end{figure}

We now consider numerical simulations for solutions to the scalar Korteweg--de Vries 
and nonlinear Schr\"odinger equations which are generated from given arbitrary initial
scattering data $p_0=p_0(x)$ which decays in both far fields,
i.e.\/ corresponding to result (II) in Lemma~\ref{lemma:KdVprescription}.
For each nonlinear equation we provide two independent simulations:
(i) Direct numerical simulation using well-known spectral algorithms,
advancing the approximate solution $u_m$ in successive time steps; 
(ii) Generating of an approximate solution $g_{\mathrm{approx}}(0,0;x,t)$
using the Grassmannian flow approach by advancing the scattering data in time to $p=p(x;t)$,
and then numerically solving the linear Fredholm equation.
Let us first explain the direct numerical simulation approach we employed.
We chose an initial profile function $p_0=p_0(x)$ on the interval $[-L/2,L/2]$
for some fixed domain length $L>0$. In the Korteweg--de Vries case, we then numerically
solved the linear Fredholm integral equation as described just above, in order to generate
$g_{\mathrm{approx}}=g_{\mathrm{approx}}(0,0;x,0)$. We then set $u_0(x)=g_{\mathrm{approx}}(0,0;x,0)$. 
We chose the initial profile $p_0(x)=\exp(-\frac12x^2)\mathrm{tanh}(x/5)$
and $L=20$ in this case. The Fourier spectral method we used to advance the Korteweg--de Vries
solution in time is the split-step method,
\begin{equation*}
v_j=\exp(\Delta tK^3)u_j
\quad\text{and}\quad    
u_{j+1}=v_j+3\Delta t\mathcal F
\bigl(\mathcal F^{-1}(v_j)\mathcal F^{-1}(Kv_j)\bigr)
\end{equation*}
where $\mathcal F$ denotes the Fourier transform and
$K$ is the diagonal matrix of Fourier coefficients $2\pi\mathrm{i}k$.
In practice we use the fast Fourier transform. 
We chose $\Delta t=1.5\times10^{-3}$ and the number of Fourier modes is $2^9$.
The result is shown in the left panel in Figure~\ref{fig:KdV}.
In the Grassmannian flow approach, given the the initial profile 
$p_0=p_0(x)$ we advance the solution analytically in Fourier space
to $p(x,t)=\mathcal F^{-1}(\exp(tK^3)\mathcal F(p_0))$
at any given time $t\in[0,T]$, where $T=1.5$.
We solve the Fredholm integral equation numerically, as described above,
for $g_{\mathrm{approx}}=g_{\mathrm{approx}}(0,z;x,t)$. The solution to the Korteweg--de Vries equation  
at the given time $t$ is then computed as $g_{\mathrm{approx}}(0,0;x,t)$.
The result is shown in the right panel in Figure~\ref{fig:KdV}.

\begin{figure}
  \begin{center}
  \includegraphics[width=6cm,height=5cm]{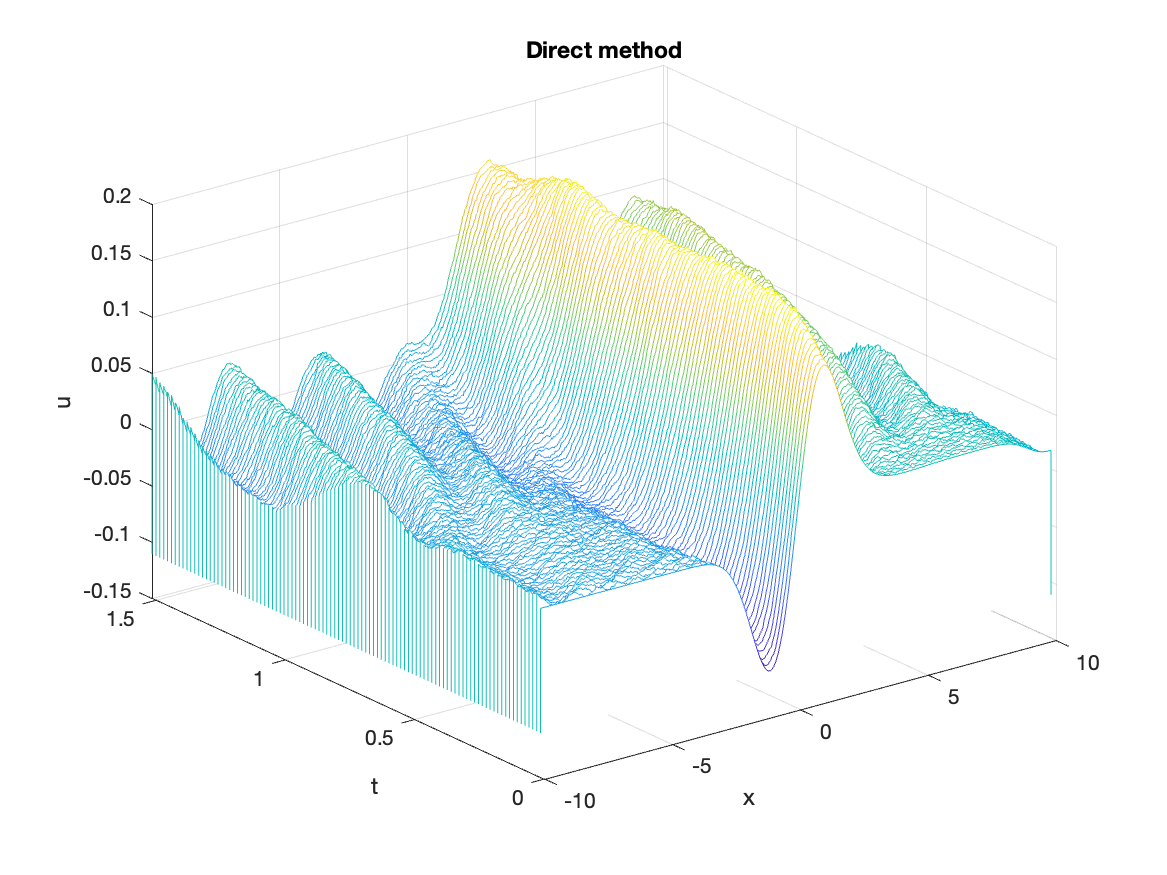}
  \includegraphics[width=6cm,height=5cm]{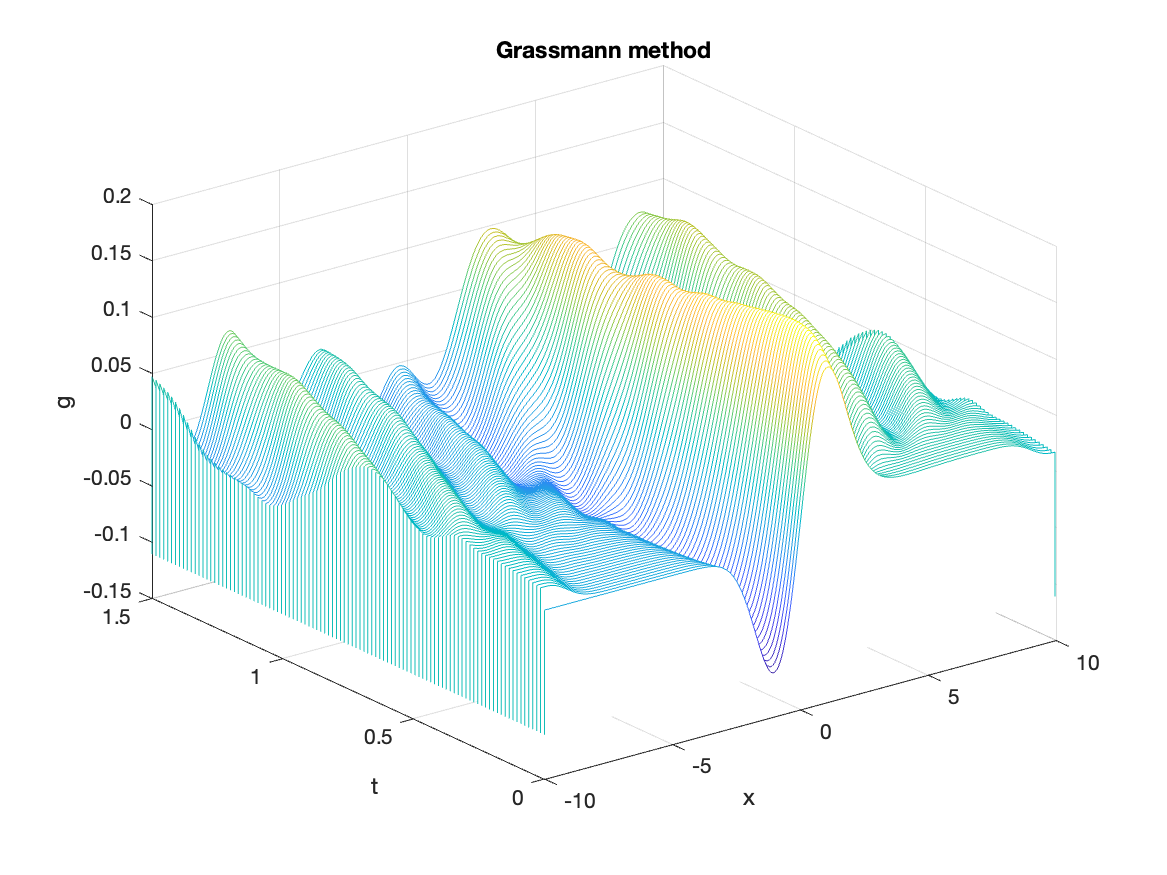}
  \end{center}
  \caption{We plot the solution to the Korteweg--de Vries equation corresponding
    to the initial scattering data $p_0(x)=\exp(-\frac12x^2)\mathrm{tanh}(x/5)$.
    The left panel shows the evolution of the solution computed 
    using a direct integration approach, while the right panel shows the corresponding solution 
    computed using the Grassmannian flow approach. We also monitored the evolution of the Fredholm Determinant of $Q=Q(x,t)$
    for all $x\in[-L/2,L/2]$ and it remained far from zero throughout.}
\label{fig:KdV}
\end{figure}

In the case of the nonlinear Schr\"odinger equation the procedure
is very similar to that for the Korteweg--de Vries equation.
Given an initial scattering profile $p_0(x)=\frac12\cosh(x/40)$,
we restrict the interval domain to $[-L/2,L/2]$ as before.
We chose $L=40$ in this case. We then numerically solve the linear Fredholm equation in order to 
generate $g_{\mathrm{approx}}=g_{\mathrm{approx}}(0,0;x,0)$.
We use the same approximation method to solve the linear Fredholm integral equation
as described above, except that we define the matrix $\widehat{Q}_{m,n}(x_\ell;t)$
differently as follows, to account for the fact that is this case, the operator $\hat Q\coloneqq P^\dag P$.
Since we have,
\begin{equation*}
\hat q(\xi,z;x,t)\coloneqq\int_{-\infty}^0p^\ast(\xi+\zeta+x;t)p(\zeta+z+x;t)\,\rd\zeta,
\end{equation*}
we set $\tilde P_{m,n}(x_\ell;t)\coloneqq p(\zeta_m+z_n+x_\ell;t)$, and then, at any given $t\geqslant0$ and
for each $x_\ell$, we set $\widehat{Q}=\widehat{Q}_{m,n}(x_\ell;t)$ to be $\widehat{Q}\coloneqq h \tilde P^\dag \tilde P$.
Note the $\tilde P^\dag$ is the complex-conjugate transpose of the matrix $\tilde P$, and
implicitly, the factor $h$ and matrix product between $\tilde P^\dag$ and $\tilde P$ realise
a left Riemann sum approximation of the integral in the definition of $\hat q$. 
We then solve $\widehat{P}=\widehat{G}-h\widehat{G}\widehat{Q}$ as previously.
Thus, initially at $t=0$, we find $g_{\mathrm{approx}}=g_{\mathrm{approx}}(0,0;x,0)$ by using this
prodcedure to generate $\widehat{G}_{m}(x_\ell;0)$, which we then evaluate at $z_m=0$.
We then set $u_0(x)=g_{\mathrm{approx}}(0,0;x,0)$. 
The Fourier spectral method we used to advance the nonlinear Schr\"odinger solution forward in time is the 
split-step method:
\begin{equation*}
v_j=\exp(-\mathrm{i}\Delta tK^2)u_j
\quad\text{and}\quad    
u_{j+1}=v_j-2\mathrm{i}\Delta t\mathcal F
\bigl(\bigl(\mathcal F^{-1}(v_j)\bigr)^2\bigl(\mathcal F^{-1}(v_j)^\ast\bigr)\bigr).
\end{equation*}
We chose $\Delta t=0.01$ and the number of Fourier modes is $2^8$. The result is shown in Figure~\ref{fig:NLS}.
In the Grassmannian flow approach, given the initial profile $p_0=p_0(x)$ we advance the solution analytically
to $p(x,t)=\mathcal F^{-1}(\exp(-\mathrm{i}tK^2)\mathcal F(p_0))$ for any given time $t\in[0,T]$, where $T=100$. 
We then use the procedure just described to generate the row vector $\widehat{G}_{m}(x_\ell;t)$ which, again,
we evaluate at $z_m=0$. The results are shown in Figure~\ref{fig:NLS}.

\begin{figure}
  \begin{center}
  \includegraphics[width=6cm,height=5cm]{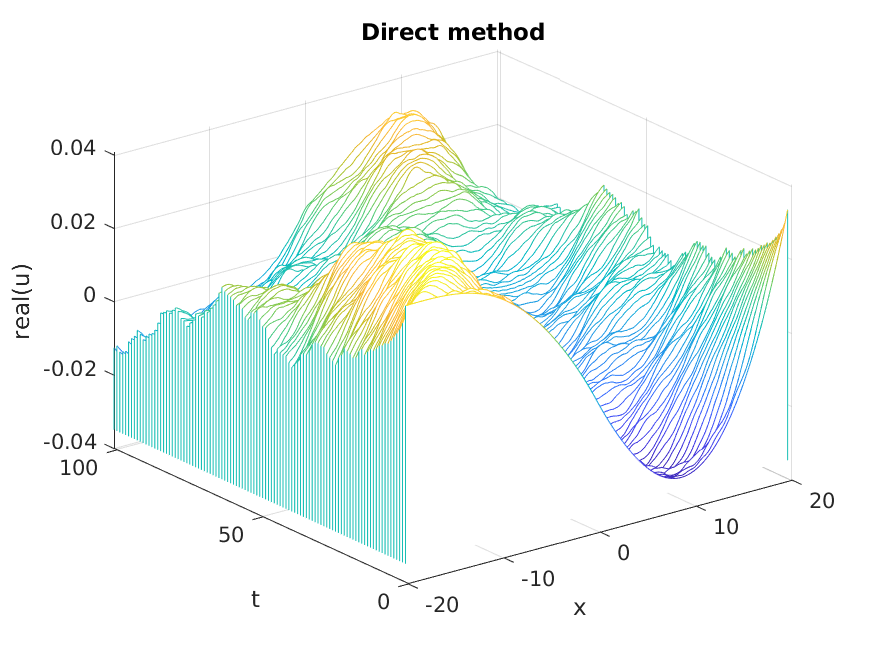}
  \includegraphics[width=6cm,height=5cm]{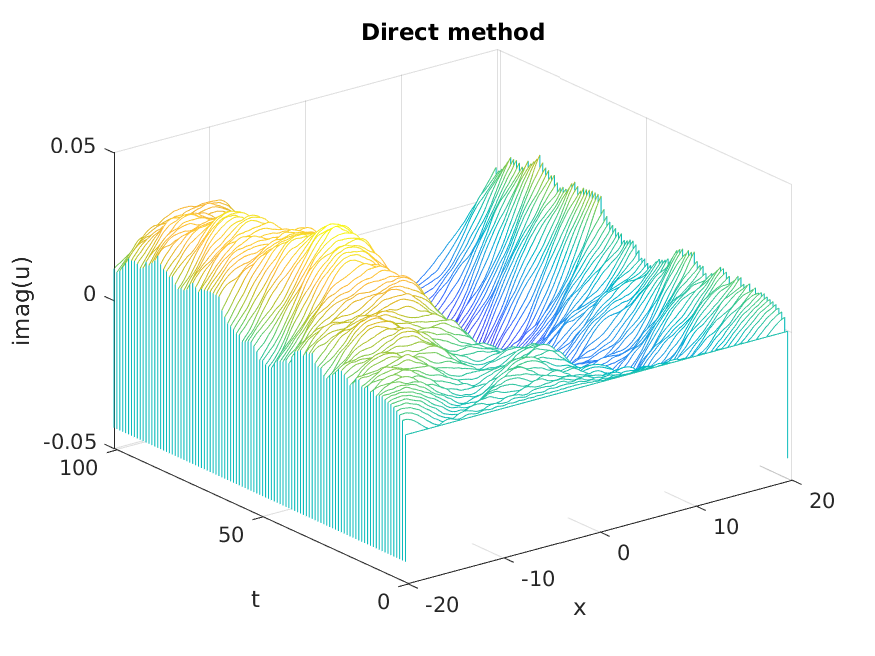}\\
  \includegraphics[width=6cm,height=5cm]{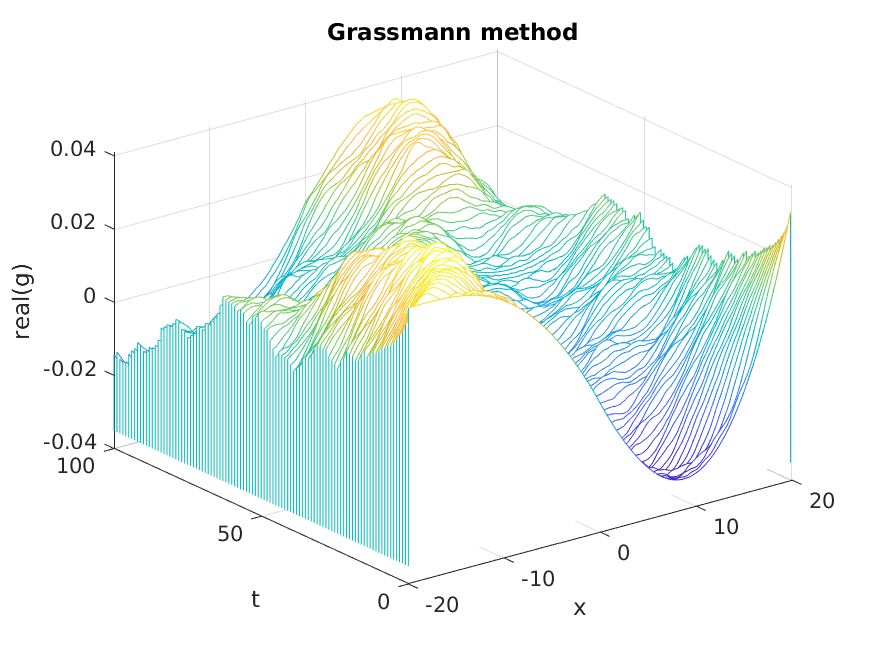}
  \includegraphics[width=6cm,height=5cm]{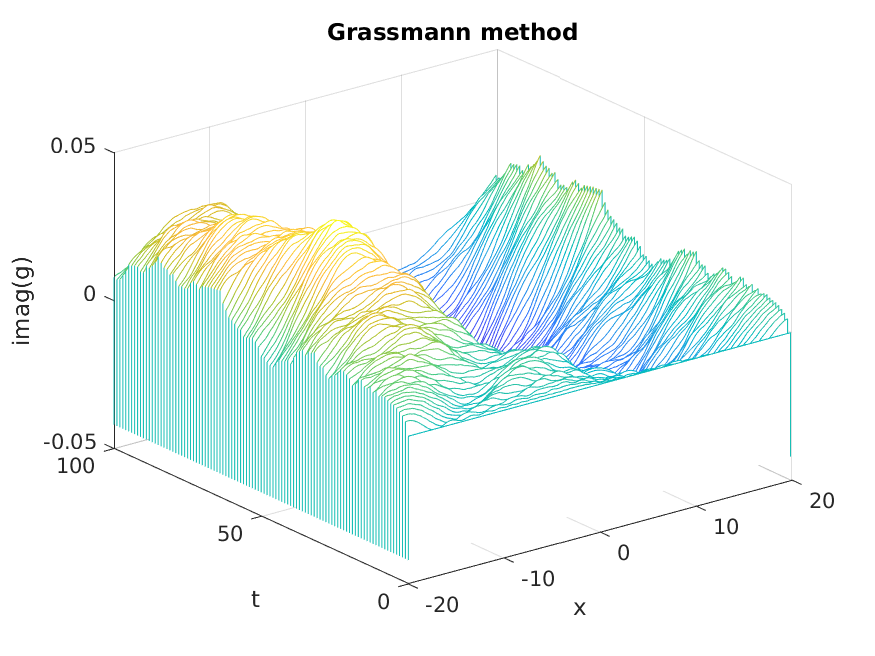}
  \end{center}
  \caption{We plot the solution to the nonlinear Sch\"odinger equation corresponding
    to the initial scattering data $p_0(x)=\frac12\cosh(x/40)$. 
    The top panels show the real and imaginary parts computed using a direct integration 
    approach, while the bottom panels show the corresponding real and imaginary parts 
    computed using the Grassmannian flow approach, of the solution evolution.
    We also monitored the evolution of the Fredholm Determinant of $Q=Q(x,t)$
    for all $x\in[-L/2,L/2]$ and it remained far from zero throughout.}
\label{fig:NLS}
\end{figure}

\begin{remark}[Periodic boundary conditions]
  For the numerical simulations generating Figures~\ref{fig:KdV} and \ref{fig:NLS},
  which involved initial scattering data that decayed in both far fields,
  we truncated the $x$-domain from $\R$ to $[-L/2,L/2]$ for sufficiently large $L>0$.
  We then employed Fourier spectral methods for the direct integration methods
  as well as to advance the scattering data in time in the Grassmannian flow approach.
  This worked well as we effectively had zero, periodic boundary conditions
  at $x=\pm L/2$. This would obviously not be the case for exponentially growing
  scattering data corresponding to soliton solutions. However, that the
  Grassmannian flow approach carries through for periodic boundary conditions
  warrants further analytical investigation. 
\end{remark}

\begin{remark}[Numerical inverse scattering]
  Numerical techniques based on the inverse scattering transform have appeared recently. 
  See, for example, Trogdon \textit{et al.\/ } \cite{TOD} and Trogdon and Olver \cite{TO}
  for very sophisticated and successful methods based on this approach.
\end{remark}

\section{Scattering and inverse scattering problems}\label{sec:scatteringproblem}
For completeness, we consider the scattering problem associated with potential
functions $U\colon\R\to\CC^{m\times m}$, for some $m\in\mathbb N$. Of course in our applications   
the function $U$ is the initial data associated with the non-commutative Korteweg--de Vries equation 
so for $x\in\R$:
\begin{equation*}
U(x)\coloneqq g(0,0;x,0).
\end{equation*}
The scattering problem concerns determining the far-field ``scatter'' information
associated with the eigenvalue problem for the potential function $U$ given by:
\begin{equation*}
-\pa_x^2 Q+UQ=\lambda Q,
\end{equation*}
where $\lambda$ is the spectral parameter. Note, $Q\colon\R\to\CC^m$ is the corresponding eigenfunction $Q=Q(x,\lambda)$. 
We assume $U\to O_m$ as $|x|\to\infty$, where $O_m$ is the $m\times m$ zero matrix.
We first determine the possible far field behaviour of the spectral problem. 
We set $U=O$ and denote the corresponding constant coefficient eigenvalue problem
as the \emph{spatial} or \emph{free eigenvalue problem}. 
We look for solutions to the spatial eigenvalue problem of the form $Q=Q_\infty\mathrm{e}^{\mu x}$,
where $Q_\infty=Q_\infty(\lambda)\in\CC^m$ and we refer to the exponential growth rates $\mu=\mu(\lambda)\in\CC$
as the \emph{spatial eigenvalues}. We observe that such solutions exist to the spatial eigenvalue
problem if and only if $\mu^2+\lambda=0$, repeated $m$ times.
We observe the continuous spectrum, which is completely
determined by the far-field spatial eigenvalue problem, coincides with the positive $\lambda$ axis. 
The spatial eigenvalues are given by $\mu=\pm\sqrt{-\lambda}$, each of multiplicity $m$.
For convenience we set $\mu_\pm=\mu_\pm(\lambda)\in\CC$ to be $\mu_\pm\coloneqq\pm\sqrt{-\lambda}$.
The eigenspaces associated with each spatial eigenvalue $\mu_\pm$ have dimension $m$ and each are
spanned by $\{e_i\colon i=1,\ldots,m\}$, where the $e_i\in\R^m$ are the vectors all of whose entries
are zero except for the $i$th entry which is $1$. Hence for each spatial eigenvalue $\mu_\pm$ there are $n$
corresponding eigenvectors of the same exponential growth rates $\mu_\pm$. We collect the eigenvectors
together as as columns and thus the far-field behaviour of the eigenvalue problem is summarised via
the $C^{m\times m}$-valued solutions:
\begin{equation*}
  Q_\infty^\pm\coloneqq\mathrm{e}^{\mu_\pm x}I_m,
\end{equation*}
where $I_m$ is the $m\times m$ identity matrix. The classical scalar case corresponds to $m=1$,
and the classical development of the solution to the scattering problem can be found for example
in Drazin and Johnson~\cite{DJ} or Keener~\cite{Keener}. The case $m\geqslant1$ we consider here
is a direct consequence of the standard scalar case. We define the function
$\varphi\colon\R^2\to\CC^{m\times m}$ by the following relation,
\begin{equation*}
  Q_{-\infty}(x,\lambda)=\mathrm{e}^{\mu x}\,I_m+\int_{-\infty}^x\varphi(z;x)\mathrm{e}^{\mu z}\,\rd z,
\end{equation*}
for any $\mu$. Note that $\varphi$ does not depend on $\lambda$. In the relation,
$Q_{-\infty}=Q_{-\infty}(x,\lambda)$ is the solution to the eigenvalue problem above 
satisfying $Q_{-\infty}\sim\mathrm{e}^{\mu x}\,I_m$ as $x\to-\infty$.
While we know $Q_{-\infty}$ exists, we need to establish that $\varphi$ does.
Substituting the relation into the eigenvalue problem, reveals, after integrating
by parts twice, that $\varphi=\varphi(z;x)$ satisfies the \emph{Goursat problem},
\begin{equation*}
  \pa_z^2\varphi-\pa_x^2\varphi=U\varphi\qquad\text{and}\qquad 2\frac{\rd}{\rd x}\varphi(x;x)=U(x),
\end{equation*}
where $U=U(x)$ is the given potential function, and we have assumed that 
$\varphi(z;x)\mathrm{e}^{\mu x}\to0$ and $\pa_z\varphi(z;x)\mathrm{e}^{\mu x}\to0$ as $z\to-\infty$.
A solution to this problem exists, see for example, Drazin and Johnson~\cite[p.~50]{DJ} or Keener~\cite[p.~412]{Keener}. 
Following the standard theory we consider an incident wave $Q_{+\infty}^-(x,\lambda)$ from $x\to+\infty$,
and examine what proportions of the incident wave are, due to the presence of the potential $U$, 
transmitted into the opposite far field $x\to-\infty$, or reflected. 
We denote by $Q^\pm_{-\infty}=Q^\pm_{-\infty}(x,\lambda)$ as the respective solutions
satisfying the respective asymptotic limits
$Q^\pm_{-\infty}\sim\mathrm{e}^{\mu_{\pm} x}\,I_m$ as $x\to-\infty$.
That we interpret the asymptotic forms $\mathrm{e}^{\mu_\pm x}\,I_m$ as incident,
transmitted or reflected waves in the respective far-field limits
might more readily be envisaged if we express $\mu_\pm(\lambda)=\pm\mathrm{i}k$ for some $k\in\CC$; note that $\lambda=k^2$.
For each $x\in\R$ and the same value of $\lambda$, the incident wave $Q_{+\infty}^-(x,\lambda)$
must lie in the span of $Q^-_{-\infty}(x,\lambda)$ and $Q^+_{-\infty}(x,\lambda)$.
This fact amounts to the condition that there must exist constant $\CC^{m\times m}$ matrix-valued coefficients
$D=D(\lambda)$ and $B=B(\lambda)$, such that for every $x\in\R$:
\begin{equation*}
  Q_{+\infty}^-(x,\lambda)=Q^-_{-\infty}(x,\lambda)\,D(\lambda)+Q_{-\infty}^+(x,\lambda)\,B(\lambda).
\end{equation*}
The solution fields $Q_{+\infty}^-$ and $Q^\pm_{-\infty}$ are generated by integrating the eigenvalue
problem from the respective far-field limits to some $x\in\R$. The vector fields concerned depend
linearly on $\lambda$ while the data depend on $\mu_\pm=\pm\sqrt{-\lambda}$.
We must introduce a branch cut, here along the positive $\lambda$-axis, due to the dependence on $\sqrt{-\lambda}$.
The coefficients $D$ and $B$ are constructed from such solution fields. We show how this is achieved in practice via the Evans function
in Appendix~\ref{sec:Evansfunction}. Both coefficients thus depend similarly on $\lambda$ and $\sqrt{-\lambda}$.
However we set $\lambda=k^2$ with $k\in\CC$, thus unravelling the two Riemann surfaces associated with $\sqrt{-\lambda}$,
mapping them both to the upper and lower halves of the $k$-complex plane. Consequently the maps $k\mapsto D(k^2)$
and $k\mapsto B(k^2)$ are analytic.
We now subsitute the corresponding expressions for $Q_{-\infty}^\pm=Q_{-\infty}^\pm(x;\lambda)$
in terms of $\varphi$ from the relation above using $\lambda=k^2$.
Assuming $\varphi(z;x)=0$ for $z>x$, that $\mu_\pm=\pm\mathrm{i}k$ and rearranging,
this compatability condition involving $D$ and $B$ generates the following relation:
\begin{align*}
  \int_{-\infty}^{+\infty}\varphi(z;x)\mathrm{e}^{-\mathrm{i}kz}\,\rd z=&\;Q_{+\infty}^-(x,k^2)D^{-1}(\lambda)-\mathrm{e}^{-\mathrm{i}kx}I_m
  -\mathrm{e}^{\mathrm{i}kx}B(k^2)D^{-1}(k^2)\\
  &\;-\int_{-\infty}^{+\infty}\varphi(z;x)\mathrm{e}^{\mathrm{i}kz}\,\rd z\,B(k^2)D^{-1}(k^2).
\end{align*}
While $D(k^2)$ and thus $\det\bigl(D(k^2)\bigr)$ is analytic in $k\in\CC$, the determinant $\det\bigl(D(k^2)\bigr)$
may have zeros. These zeros coincide precisely with discrete eigenvalues and thus bound states of the underlying
eigenvalue problem---see Appendix~\ref{sec:Evansfunction} for a discussion on this.
Suppose we define the function $\hat D=\hat D(\lambda)$ by the relation $D(\lambda)=I_m+\hat D(\lambda)$.
Let $\nu\colon \CC^{m\times m}\to\CC^{m\times m}$ denote the function $\nu(\hat D)\coloneqq-\hat D(I_m+\hat D)^{-1}\det(I_m+\hat D)$.
Then $\nu\colon \CC^{m\times m}\to\CC^{m\times m}$ is an analytic function; see Simon~\cite[p.~46]{Simon:Traces}.
Hence we observe that $D^{-1}(\lambda)=I_m+\nu\bigl((\hat D(\lambda)\bigr)/\det\bigl(I_m+\hat D(\lambda)\bigr)$,
and so the singularites of $D^{-1}=D^{-1}(\lambda)$ correspond to the zeros of $\det\bigl(D(\lambda)\bigr)$.
Furthermore, if we assume the potential function $U=U(x)$ is bounded then we know the discrete spectrum
remains in a bounded region of the complex $\lambda$-plane. To see this we observe, by direct computation
from the eigenvalue problem, we have
\begin{equation*}
|\lambda|\,\|Q\|_{L^2(\R;\CC^{m\times m})}^2
\leqslant\|\pa Q\|_{L^2(\R;\CC^{m\times m})}^2+\|U\|_{L^\infty(\R;\CC^{m\times m})}\|Q\|_{L^2(\R;\CC^{m\times m})}^2.  
\end{equation*}
Hence we observe that if we restrict ourselves to $H^1(\R;\CC^{m\times m})$-valued eigenfunctions,
which we do, then any discrete eigenvalues $\lambda$ are necessarily bounded in magnitude.

Now consider the contour in the complex $k$-plane given by $\mathcal C\coloneqq\{k\in\CC\colon k=\kappa-\mathrm{i}\gamma,~\kappa\in\R\}$
for some fixed real constant $\gamma>0$ sufficiently large so that all the poles of $D^{-1}(\lambda)$ lie above $\mathcal C$.
Define the function $F\colon\R\to\CC^{m\times m}$ by the following contour integral,
which is well defined by our choice of the contour $\mathcal C$:
\begin{equation*}
F(Z)\coloneqq\frac{1}{2\pi}\int_{\mathcal C} B(k^2)D^{-1}(k^2)\mathrm{e}^{\mathrm{i}kZ}\,\rd k.
\end{equation*}
Multiplying both sides of the integral relation just above for $\varphi=\varphi(z;x)$ by $\mathrm{e}^{\mathrm{i}k\xi}$,
integrating along the contour $\mathcal C$ and dividing through by $2\pi$, we find that $\varphi$ must satisfy:
\begin{equation*}
  \varphi(\xi;x)=\frac{1}{2\pi}\int_{\mathcal C} Q_{+\infty}^-(x,k^2)D^{-1}(k^2)\mathrm{e}^{\mathrm{i}k\xi}\,\rd k-F(\xi+x)
  -\int_{-\infty}^{+\infty}\varphi(z;x)F(z+\xi)\,\rd z.
\end{equation*}
Note by standard anaylsis the integral of $\mathrm{e}^{\mathrm{i}k(\xi-x)}$ along the contour $\mathcal C$ is zero.
Indeed the first contour integral on the right above is also zero, as follows. Approximate the
contour integral by truncating the contour $\mathcal C$ so that $\kappa\in[-R,R]$ for some large $R>0$.
Call the truncated contour $\mathcal C_R$. Construct a closed contour by appending a semi-circular arc $\mathcal C_{\mathrm{arc}}$
of radius $R$ to $\mathcal C_R$; so the ends of $\mathcal C_{\mathrm{arc}}$ coincide with the ends of $\mathcal C_R$, and
the lowest point of $\mathcal C_{\mathrm{arc}}$ lies at $k=-\gamma-R$. By Cauchy's Theorem, since the argument is
analytic inside $\mathcal C_R+\mathcal C_{\mathrm{arc}}$, the integral is zero. We then take the limit $R\to\infty$.
Jordan's Lemma implies the integral along $\mathcal C_{\mathrm{arc}}$ is zero provided $\xi<x$; see Remark~\ref{rmk:Jordanslemma}
below. Hence the integral along $\mathcal C$ is zero. Consequently we see that $\varphi$ must satisfy:
\begin{equation*}
  F(\xi+x)+\varphi(\xi;x)+\int_{-\infty}^{x}\varphi(z;x)F(z+\xi)\,\rd z=0.
\end{equation*}
Let us now examine how to evaluate the scattering data $F=F(Z)$. As we have just seen, again we
approximate the contour integral by truncating the contour $\mathcal C$ so that $\kappa\in[-R,R]$ for some large $R>0$,
and call the truncated contour $\mathcal C_R$. Then we construct a closed contour consisting of the
interval $[+R,-R]$ on the real $k$-axis, noting the direction of integration indicated, the contour $\mathcal C_R$,
and the two straight-line contours $\mathcal C_1$ and $\mathcal C_2$, respectively from $k=R-\mathrm{i}\gamma$ to $k=R$,
and from $k=-R$ to $k=-R-\mathrm{i}\gamma$. By residue calculus, the integral around the closed contour
$\mathcal C_R\cup\mathcal C_1\cup[+R,-R]\cup\mathcal C_2$ is given by $2\pi\mathrm{i}$ times the sum
of the residues of $B(k^2)D^{-1}(k^2)\mathrm{e}^{\mathrm{i}kZ}$ at the zeros $k=k_n$ of $\det\bigl(D(k^2)\bigr)=0$. 
In the large $R$ limit the contour integrals along $\mathcal C_1$ and $\mathcal C_2$ vanish and we are left with
\begin{equation*}
  F(Z)=\frac{1}{2\pi}\int_{-\infty}^{+\infty} B(k^2)D^{-1}(k^2)\mathrm{e}^{\mathrm{i}kZ}\,\rd k
  +\mathrm{i}\sum_{n}\mathrm{res}\bigl(B(k^2)D^{-1}(k^2)\mathrm{e}^{\mathrm{i}kZ}\colon k=k_n\bigr). 
\end{equation*}
The zeros $\det\bigl(D(k^2)\bigr)$ of concern here, within the closed contour, occur
at $k_n=-\mathrm{i}\sqrt{|\lambda_n|}$. By Taylor series expansion about any zero $k=k_n$ we observe,
$D(k^2)=\bigl(D^\prime(k_n^2)\,(2k_n)\bigr)(k-k_n)+\mathcal O\bigl(|k-k_n|^2\bigr)$ and so 
\begin{equation*}
  \mathrm{res}\bigl(B(k^2)D^{-1}(k^2)\mathrm{e}^{\mathrm{i}kZ}\colon k=k_n\bigr)
  =-\tfrac12\frac{B(\lambda_n)}{D^\prime(\lambda_n)}\frac{\mathrm{e}^{\sqrt{|\lambda_n|}Z}}{\sqrt{|\lambda_n|}}.
\end{equation*}
Hence the scattering data $F=F(Z)$ is given by
\begin{equation*}
  F(Z)=\frac{1}{2\pi}\int_{-\infty}^{+\infty} B(k^2)D^{-1}(k^2)\mathrm{e}^{\mathrm{i}kZ}\,\rd k
  -\tfrac12\sum_{n}\frac{B(\lambda_n)}{D^\prime(\lambda_n)}\frac{\mathrm{e}^{\sqrt{|\lambda_n|}Z}}{\sqrt{|\lambda_n|}}.
\end{equation*}
Note in particular that the contribution to the scattering data $F=F(Z)$ from the discrete spectra $\lambda=\lambda_n$
grows exponentially in the limit $Z\to+\infty$. 
In summary therefore, we see that $\varphi=\varphi(\xi;x)$ necessarily satisfies the linear
Volterra integral equation involving the scattering data $F=F(\xi+x)$ shown, with $F=F(Z)$ given  
by the formula just above. The linear Volterra integral equation is the \emph{Gel'fand--Levitin--Marchenko equation}.
If we combine this knowledge with the observation that we can resconstruct
the original potential function $U$ from $\varphi(x,x)$, via the boundary relation in the
Goursat problem above, we have thus also completed the formulation of the \emph{inverse scattering problem}.
In other words, given the data implicit in the function $F=F(Z)$, the eigenvalues and transmission and
reflection coefficients, we can then, in principle, solve the Gel'fand--Levitin--Marchenko equation for $\varphi$,
and evaluate $U$.
\begin{remark}[Relation to the linear Fredholm kernel equation]\label{rmk:GLMandRiccati}
The relation between the version of the Gel'fand--Levitin--Marchenko equation
for $\varphi=\varphi(Z;X)$ above, given by,
\begin{equation*}
  F(Z+X)+\varphi(Z;X)+\int_{-\infty}^{X}\varphi(Y;X)F(Y+Z)\,\rd Y=0,
\end{equation*}
and the linear Fredholm equation for the kernel $\hat g(z;x)=g(0,z;x)$ given by
(suppressing any time-dependence for the moment),
\begin{equation*}
  p(z+x)=\hat g(z;x)-\int_{-\infty}^0\hat g(\eta;x)p(\eta+z+x)\,\rd\eta,
\end{equation*}  
is given as follows. Making the change of variables $z\coloneqq Z-\frac12x$, $\eta\coloneqq Y-\frac12x$
and then $x\coloneqq 2X$ in the linear Fredholm kernel equation, we arrive at the relation:
\begin{equation*}
  p(Z+X)=\hat g(Z-X;2X)-\int_{-\infty}^X\hat g(Y-X;2X)p(Y+Z)\,\rd Y.
\end{equation*}  
We then see that if we identify $\varphi(Z;X)\coloneqq g(z;x)\equiv g(Z-X;2X)$ and $F(Z+X)\coloneqq-p(x+z)\equiv-p(Z+X)$,
then $\varphi(Z;X)$ satisfies the Gel'fand--Levitin--Marchenko equation above.
\end{remark}
\begin{remark}\label{rmk:Jordanslemma}
  By rescaling and iteration it can be shown that $Q\mathrm{e}^{\mathrm{i}kx}=1+\tilde Q$, where $\tilde Q=\mathcal O(|k|^{-2})$
  as $|k|\to\infty$. Hence we see that,
\begin{align*}
  \frac{1}{2\pi}\int_{\mathcal C_{\mathrm{arc}}} Q_{+\infty}^-(x,k^2)D^{-1}(k^2)\mathrm{e}^{\mathrm{i}k\xi}\,\rd k
  &=\frac{1}{2\pi}\int_{\mathcal C_{\mathrm{arc}}} Q_{+\infty}^-(x,k^2)\mathrm{e}^{\mathrm{i}kx}D^{-1}(k^2)\mathrm{e}^{\mathrm{i}k(\xi-x)}\,\rd k\\
  &=\delta(\xi-x)+\frac{1}{2\pi}\int_{\mathcal C_{\mathrm{arc}}} \tilde Q(x,k^2)D^{-1}(k^2)\mathrm{e}^{\mathrm{i}k(\xi-x)}\,\rd k,
\end{align*}
which is zero provided $\xi<x$ by Jordan's Lemma.
\end{remark}

\section{Computing the transmission and reflection coefficients}\label{sec:Evansfunction}
Recall the scattering problem outlined at the beginning of Appendix~\ref{sec:scatteringproblem}.
We outline herein a practical approach to determine the scattering data coefficients $D=D(\lambda)$
and $B=B(\lambda)$ associated with the scattering spectral problem for given potential functions $U=U(x)$. 
The shooting and matching technique that underlies the Evans function method is ideally 
suited to this purpose. See Alexander \textit{et al.\/} \cite{AGJ90} for a comprehensive account
of the Evans function, and indeed, the Grassmaninnian determinent bundle that underlies it.
To begin, we largely shadow the development in the first part of Appendix~\ref{sec:scatteringproblem},
albeit for the eigenvalue problem re-written in the following form. 
If we set $P=P(x,\lambda)\in\CC^m$ to be $P\coloneqq\pa_x Q$, then we observe that
the eigenvalue problem can be re-written as the first order system
\begin{equation*}
  \frac{\rd Y}{\rd x}=AY,\qquad\text{where}\quad
  Y\coloneqq\begin{pmatrix} Q\\P\end{pmatrix}\quad\text{and}\quad
  A(x,\lambda)\coloneqq\begin{pmatrix} O_m & I_m\\ U-\lambda I_m & O_m\end{pmatrix}.
\end{equation*}
Associated with this first order system is the \emph{adjoint system} for $Z=Z(x,\lambda)\in\CC^{1\times2m}$
given by 
\begin{equation*}
  \frac{\rd Z}{\rd x}=-Z A.
\end{equation*}
We observe that $(\rd/\rd x)(ZY)=0$ and so $ZY$ is constant and independent of $x\in\R$.
Let $Y_{-\infty}^\pm=Y_{-\infty}^\pm(x,\lambda)$ and $Y_{+\infty}^-=Y_{+\infty}^-(x,\lambda)$
denote the $C^{2m\times m}$ matrices of solutions with the far-field aymptotic behaviour,
\begin{equation*}
  Y_{-\infty}^\pm(x,\lambda)\sim\begin{pmatrix}I_m\\ \mu_\pm I_m\end{pmatrix}\mathrm{e}^{\mu_\pm x}~\text{as}~x\to-\infty
  \quad\text{and}\quad Y_{+\infty}^-(x,\lambda)\sim\begin{pmatrix}I_m\\ \mu_- I_m\end{pmatrix}\mathrm{e}^{\mu_- x}~\text{as}~x\to+\infty.
\end{equation*}
These are just the augmented versions (with the field block $P$ appended) of the
corresponding respective fields $Q_{-\infty}^\pm=Q_{-\infty}^\pm(x,\lambda)$ and
$Q_{+\infty}^-=Q_{+\infty}^-(x,\lambda)$ from Appendix~\ref{sec:scatteringproblem}.
The matching condition that for each $x\in\R$ the field $Y_{+\infty}^-$ must lie
in the span of $Y_{+\infty}^-$ and $Y_{-\infty}^+$ applies precisely as before
(though now augmented to include the block $P$) and thus
there must exist constant $\CC^{m\times m}$ matrix-valued coefficients
$D=D(\lambda)$ and $B=B(\lambda)$, such that for every $x\in\R$:
\begin{equation*}
  Y_{+\infty}^-(x,\lambda)=Y^-_{-\infty}(x,\lambda)\,D(\lambda)+Y^+_{-\infty}(x,\lambda)\,B(\lambda).
\end{equation*}
Now suppose $Z_{-\infty}^\pm=Z_{-\infty}^\pm(x,\lambda)$ are the $C^{m\times 2m}$ block solutions to the adjoint problem
with the far-field aymptotic behaviour,
\begin{equation*}
  Z_{-\infty}^\pm(x,\lambda)\sim\begin{pmatrix}-\mu_\pm I_m& I_m\end{pmatrix}\mathrm{e}^{\mu_\pm x}~\text{as}~x\to-\infty.
\end{equation*}
The asymptotic solutions shown are the solutions to the adjoint problem in the far-field.
Naturally for these block solutions we know $(\rd/\rd x)(ZY)=O_m$ and so $ZY$ is constant.
However we note that for each $\lambda\in\CC$ the far field eigenvectors satisfy,
\begin{equation*}
    \begin{pmatrix}-\mu_\pm I_m& I_m\end{pmatrix}\begin{pmatrix}I_m\\ \mu_\pm I_m\end{pmatrix}=O_m,
\end{equation*}
which implies $Z_{-\infty}^\pm(-\infty,\lambda)Y_{-\infty}^\pm(-\infty,\lambda)=O_m$,
which in turn implies $Z_{-\infty}^\pm(x,\lambda)Y_{-\infty}^\pm(x,\lambda)=O_m$, for each $x\in\R$.
The latter conclusion follows from the fact that $ZY$ is constant on $\R$.
Similarly we observe that,
\begin{equation*}
    \begin{pmatrix}-\mu_\pm I_m& I_m\end{pmatrix}\begin{pmatrix}I_m\\ \mu_\mp I_m\end{pmatrix}=\pm(\mu_--\mu_+)I_m,
\end{equation*}
which implies $Z_{-\infty}^\pm(-\infty,\lambda)Y_{-\infty}^\mp(-\infty,\lambda)=\pm(\mu_--\mu_+)I_m$,
which in turn implies that for each $x\in\R$, $Z_{-\infty}^\pm(x,\lambda)Y_{-\infty}^\mp(x,\lambda)=\pm(\mu_--\mu_+)I_m$.
Consequently if we respectively pre-multiply the compatability condition
by $Z_{-\infty}^+(x,\lambda)$ and then $Z_{-\infty}^-(x,\lambda)$, we observe that the 
coefficient matrices $D=D(\lambda)$ and $B=B(\lambda)$ are given by:
\begin{equation*}
D(\lambda)=(\mu_--\mu_+)^{-1}Z_{-\infty}^+(x,\lambda)Y_{+\infty}^-(x,\lambda)
\quad\text{and}\quad
B(\lambda)=(\mu_+-\mu_-)^{-1}Z_{-\infty}^-(x,\lambda)Y_{+\infty}^-(x,\lambda).
\end{equation*}
Note the quantities on the right are independent of $x\in\R$. Eigenvalues of the original
spectral problem for $Q=Q(x,\lambda)$, or equivalently for the problem for $Y=Y(x,\lambda)$
above correspond to solutions that decay in both far-fields. In particular the far-field 
solutions are not oscillatory, so occur for $\lambda<0$, away from the continuous spectrum.
Further, for $\lambda<0$ the solution $Y^-_{+\infty}=Y_{+\infty}^-(x,\lambda)$, by construction,
decays exponentially like $\mathrm{e}^{\mu_- x}$ as $x\to+\infty$, where recall $\mu_-(\lambda)=-\sqrt{-\lambda}$.
For $Y^-_{+\infty}=Y_{+\infty}^-(x,\lambda)$ to be an eigenfunction it must lie in the span 
of the solutions $Y_{-\infty}^+=Y_{-\infty}^+(x,\lambda)$, and the condition for this to hold is that: 
$\det\bigl(D(\lambda)\bigr)=0$, 
i.e.\/ eigenvalues $\lambda$ coincide with zeroes of the determinant of the coefficient matrix $D=D(\lambda)$.
\begin{remark}[Practical computation]\label{rmk:practicalcomp}
  In practice when integrating the eigenvalue problem to determine $Y^\pm_{-\infty}=Y_{-\infty}^\pm(x,\lambda)$
  and $Y^-_{+\infty}=Y_{+\infty}^-(x,\lambda)$ it is preferable to avoid the exponential behaviour in the far-field.
  We can achieve this by setting $\hat Y^\pm_{-\infty}\coloneqq Y_{-\infty}^\pm\mathrm{e}^{-\mu_\pm x}$ and 
  $\hat Y^-_{+\infty}\coloneqq Y_{+\infty}^-\mathrm{e}^{-\mu_- x}$ as well as
  $\hat Z^\pm_{-\infty}\coloneqq Z_{-\infty}^\pm\mathrm{e}^{-\mu_\pm x}$.
  The respective rescaled solutions $\hat Y=Y\mathrm{e}^{-\mu x}$ and $\hat Z=Z\mathrm{e}^{-\mu x}$ 
  satisfy the equations
\begin{equation*}
\frac{\rd\hat Y}{\rd x}=(A-\mu I_{2m})\hat Y\qquad\text{and}\qquad\frac{\rd\hat Z}{\rd x}=-\hat Z(A+\mu I_{2m}).
\end{equation*}
and $\hat Y$ as well as $\hat Z$ have constant far-field limiting behaviours---the same as those for
$Y$ and $Z$ but without the exponential factors. Since $\mu_-=-\mu_+$ it is evident that
\begin{align*}
  D(\lambda)&=(\mu_--\mu_+)^{-1}\hat Z_{-\infty}^+(x,\lambda)\hat Y_{+\infty}^-(x,\lambda),\\
  B(\lambda)&=(\mu_+-\mu_-)^{-1}\hat Z_{-\infty}^-(x,\lambda)\hat Y_{+\infty}^-(x,\lambda)\,\mathrm{e}^{2\mu_- x}.
\end{align*}
In the latter case, since $ZY$ is constant and we can evaluate the right-hand side at any value $x\in\R$,
we can deal with the problematic exponential factor by evaluating $B=B(\lambda)$ at $x=0$. 
\end{remark}
\begin{remark}
  To evaluate the scattering data $F=F(Z)$ in the Gel'fand--Levitan--Marchenko equation,
  or equivalently $p=p(Z)$ in the linear Fredholm kernel equation, associated with a given potential $U$,
  we need to determine $D'(\lambda)$; see Appendix~\ref{sec:scatteringproblem}.
  By differentiating the formula for $D=D(\lambda)$ in Remark~\ref{rmk:practicalcomp}, we observe
  \begin{align*}
    D'(\lambda)=&\;\Bigl(\pa_\lambda\bigl((\mu_--\mu_+)^{-1}\bigr)\Bigr)\hat Z_{-\infty}^+(x,\lambda)\hat Y_{+\infty}^-(x,\lambda)\\
    &\;+(\mu_--\mu_+)^{-1}\bigl(\pa_\lambda\hat Z_{-\infty}^+(x,\lambda)\bigr)\hat Y_{+\infty}^-(x,\lambda)\\
    &\;+(\mu_--\mu_+)^{-1}\hat Z_{-\infty}^+(x,\lambda)\bigl(\pa_\lambda\hat Y_{+\infty}^-(x,\lambda)\bigr).
  \end{align*}
  By direct computation $\pa_\lambda\mu_\pm=-1/(2\mu_\pm)$ and so
  $\pa_\lambda\bigl((\mu_--\mu_+)^{-1}\bigr)=\tfrac12(\mu_--\mu_+)^{-1}(\mu_-\mu_+)^{-1}$.
  The quantity $\pa_\lambda\hat Y$ satisfies the linear flow equation 
  \begin{equation*}
  \frac{\rd\pa_\lambda\hat Y}{\rd x}=\bigl(-\mu'(\lambda)I_{2m}-\hat J_{2m}\bigr)\hat Y+(A-\mu I_{2m})\pa_\lambda \hat Y,
  \end{equation*}
  where $\hat J_{2m}$ is the matrix of four $m\times m$ blocks, all of which are $O_m$ except for the lower left block
  which is $I_m$. An analogous flow equation exists for $\pa_\lambda\hat Z$. The linear flow equations for $\pa_\lambda\hat Y$
  and $\pa_\lambda\hat Z$ can be integrated alongside those for $\hat Y$ and $\hat Z$ to construct $D=D(\lambda)$ and
  $D'=D'(\lambda)$.
\end{remark}
The determinant $\det\bigl(D(\lambda)\bigr)$ is well-known as the \emph{Evans function} or \emph{miss-distance function};
see for example Alexander \textit{et al.\/} \cite{AGJ90}. The Evans function has several equivalent formulations;
see for example Karambal and Malham~\cite{KM}. Above it is defined in terms a transmission coefficient.
However its equivalent classical definition is:
\begin{equation*}
  \det\bigl(D(\lambda)\bigr)\coloneqq
  \mathrm{e}^{-\int_0^x\mathrm{tr}\bigl(A(x,\lambda)\bigr)\,\rd x}\det\bigl( Y_{-\infty}^+(x,\lambda)\,\, Y_{+\infty}^-(x,\lambda)\bigr),
\end{equation*}
where $Y_{-\infty}^+=Y_{-\infty}^+(x,\lambda)$ and $Y_{+\infty}^-=Y_{+\infty}^-(x,\lambda)$ are the block
solutions identified above. 
\begin{remark}[Practical eigenfunction normalisation]
  An alternative stable approach in the scalar case to determining the normalisation factors associated
  with the functions $\mathrm{e}^{\sqrt{\lambda_n}Z}$ in the discrete spectrum sum in $F(Z)$,
  is as follows. Simply append a third component onto the integration of $Y^\pm_{-\infty}$,
  whose vector field is given by the square of the first component---recall the discrete
  eigenvalues and eigenfunctions are real here. Then when integrating the appended set of ordinary
  differential equations from $x\to-\infty$ to the opposite far field, the third
  component gives the square integral of the eigenfunction. We then normalise using the reciprocal. 
\end{remark}
\begin{remark}
  We note that the expression for the potential function $U(x)=2(\rd/\rd x)\varphi(x;x)$
  exhibits a sign difference to the corresponding expression $U(X)=-2(\rd/\rd X)K(X,X)$
  given in Remark~\ref{rmk:KdVsolutions}. That these two expressions do match can be established once we
  take into account both the transformations indicated in Remark~\ref{rmk:KdVsolutions} in Secion~\ref{sec:KdV},
  and that indicated in Remark~\ref{rmk:GLMandRiccati} in Appendix~\ref{sec:scatteringproblem}.
\end{remark}
\begin{remark}[Scatter Grassmannians and superpotentials]\label{rmk:scatterGrassmannian}
The Evans function can be applied to more general eigenvalue problems, including examples
where the potential matrix $U$ entries are not necessarily zero in the far field, or for non-selfadjoint eigenvalue problems,
and so forth. In such cases the matrix $A$ may have a more general form than that indicated above,
and the far field limits of the columns of 
$Y_{-\infty}^+=Y_{-\infty}^+(x,\lambda)$ and $Y_{+\infty}^-=Y_{+\infty}^-(x,\lambda)$ may be different
with different exponential rates. 
To evaluate the Evans function, the typical procedure is to integrate $Y_{-\infty}^+$ and $Y_{+\infty}^-$
from the far-field to $x=0$ and evaulate $\det\bigl(D(\lambda)\bigr)$ at $x=0$.
Remark~\ref{rmk:practicalcomp} showed how to deal with far-field behaviour with a single exponential rate.
However numerically, multiple distinct exponential growth rates represent a `stiff' numerical integration problem.
One natural solution is to integrate the linear differential equation for $Y$ by continuous
orthogonalisation; see Humpherys and Zumbrun~\cite{HZ}. Another natural approach is project
the Stiefel manifold frames $Y_{-\infty}^+\in\CC^{2m\times k}$ and $Y_{+\infty}^-\in\CC^{2m\times(2m-k)}$
onto the respective Grassmannians $\mathrm{Gr}(\CC^{2m};\CC^k)$ and $\mathrm{Gr}(\CC^{2m};\CC^{2m-k})$.
Note in the more general context the dimensions of the stable manifolds in the far-fields do not
necessarily match, though away from the essential spectrum, they do add to $2m$.
Here, we assign one as having dimension $1\leqslant k<2m$.
In this context we denote these Grassmann manifolds as \emph{scattering Grassmannians}.
As homogeneous manifolds, the exponential growth rates present in the solution frames
in the far-field, do not play a role on the corresponding Grassmannians---though are present
in the group $\mathrm{GL}(\CC^k)$ flow through the fibres if we view, say,
the Grassmannian $\mathrm{Gr}(\CC^{2m};\CC^k)$ as the base space
in the Stiefel manifold principle fibre bundle $\pi\colon\mathrm{St}(\CC^{2m},k)\to\mathrm{Gr}(\CC^{2m};\CC^k)$.
However, for a fixed $\lambda\in\CC$, when integrating on the Grassmannian $\mathrm{Gr}(\CC^{2m};\CC^k)$
from one far field to the opposite using a given coordinate patch/cell which, say, is parameterised
by $G\in\CC^{(2m-k)\times k}$, the entries in $G$ may become singular at particular values of $x\in\R$.
This is naturally circumvented by changing coordinate patches; see for example Ledoux \textit{et al.\/} \cite{LMT}.
Also see Pressley and Segal~\cite[Sec.~8.12]{PS}.
In the case of self-adjoint spectral problems, these singularities for $(\lambda,x)\in\R^2$, indicate potential 
eigenvalues and contribute to the Maslov index. See Beck and Malham~\cite{BM} for more details.
Further, suppose the eigenvalue problem (not necessarily self-adjoint) has the form 
\begin{equation*}
  \frac{\rd Y}{\rd x}=A\,Y,\quad\text{where}\quad
Y\coloneqq\begin{pmatrix} Q\\P\end{pmatrix}\quad\text{and}\quad
  A\coloneqq\begin{pmatrix} a & b\\ c & d\end{pmatrix},
\end{equation*}
with $x\in\R$ and the the commensurate block matrices $a$, $b$, $c$ and $d$ may all depend on $x\in\R$ and a
spectral parameter $\lambda\in\CC$. Then the maps $G\colon Q\mapsto P$ and $G^\prime\colon O\mapsto Q$, assuming they exist,
satisfy the respective Riccati equations $\rd G/\rd x=c+dG-G(a+bG)$ and $\rd G^\prime/\rd x=b+aG^\prime-G^\prime(d+cG^\prime)$.
The solutions $G$ and $G^\prime$ represent \emph{superpotentials}; see Bougie \textit{et al.\/} \cite{BGMR}.
Formally, the fields $G$ and $G^\prime$ can be used to diagonalise $\rd/\rd x-A$ in the sense that
\begin{equation*}
  \begin{pmatrix} \id & G^\prime\\ G& \id\end{pmatrix}^{-1}
    \begin{pmatrix}\rd/\rd x-A\end{pmatrix}
      \begin{pmatrix} \id & G^\prime\\ G& \id\end{pmatrix}
      =\begin{pmatrix} \rd/\rd x-(a+bG)   & O\\ O & \rd/\rd x-(d+cG^\prime)\end{pmatrix}.
\end{equation*}
Note, for example, the field $\rd/\rd x-(a+bG)$ generates the flow through the fibres
of the principle fibre bundle $\pi\colon\mathrm{St}(\CC^{2m},\CC^k)\to\mathrm{Gr}(\CC^{2m};\CC^k)$,
i.e.\/ $Q=Q(x,\lambda)$ satisfies $\rd Q/\rd x=(a+bG)Q$ in the chosen coordinate patch indicated.
See Ledoux \textit{et al.\/} \cite{LMT} and Remark~\ref{rmk:fibrebundle}.
The superpotential fields can be used to analytically generate eigenvalues and eigenfunctions
associated with certain potentials, such as the `$\mathrm{sech}^2$'-potential.
For more details, see Bougie \textit{et al.\/} \cite{BGMR} and also Keener~\cite[p.~323]{Keener}.
\end{remark}

\end{document}